\documentclass[letterpaper, 11pt, thm-restate]{article}

\usepackage{import}
\newcommand{\figureroadmap}{

\tikzset{every picture/.style={line width=0.75pt}} 

\begin{tikzpicture}[x=0.75pt,y=0.75pt,yscale=-1,xscale=1]

\draw   (54,80) -- (190,80) -- (190,110) -- (54,110) -- cycle ;
\draw   (270,80) -- (410,80) -- (410,110) -- (270,110) -- cycle ;
\draw   (480,80) -- (605.4,80) -- (605.4,110) -- (480,110) -- cycle ;
\draw    (480,95) -- (413,95) ;
\draw [shift={(410,95)}, rotate = 360] [fill={rgb, 255:red, 0; green, 0; blue, 0 }  ][line width=0.08]  [draw opacity=0] (8.04,-3.86) -- (0,0) -- (8.04,3.86) -- (5.34,0) -- cycle    ;
\draw   (400,40) -- (490,40) -- (490,60) -- (400,60) -- cycle ;
\draw    (445,60) -- (445,92) ;
\draw [shift={(445,95)}, rotate = 270] [fill={rgb, 255:red, 0; green, 0; blue, 0 }  ][line width=0.08]  [draw opacity=0] (8.04,-3.86) -- (0,0) -- (8.04,3.86) -- (5.34,0) -- cycle    ;
\draw   (185,40) -- (275,40) -- (275,60) -- (185,60) -- cycle ;
\draw    (230,60) -- (230,92) ;
\draw [shift={(230,95)}, rotate = 270] [fill={rgb, 255:red, 0; green, 0; blue, 0 }  ][line width=0.08]  [draw opacity=0] (8.04,-3.86) -- (0,0) -- (8.04,3.86) -- (5.34,0) -- cycle    ;
\draw    (270,95) -- (193,95) ;
\draw [shift={(190,95)}, rotate = 360] [fill={rgb, 255:red, 0; green, 0; blue, 0 }  ][line width=0.08]  [draw opacity=0] (8.04,-3.86) -- (0,0) -- (8.04,3.86) -- (5.34,0) -- cycle    ;
\draw   (290,140) -- (390,140) -- (390,170) -- (290,170) -- cycle ;
\draw  [draw opacity=0] (140.13,125.14) .. controls (140.09,125.14) and (140.04,125.14) .. (140,125.14) .. controls (131.72,125.14) and (125,118.42) .. (125,110.14) .. controls (125,110.09) and (125,110.05) .. (125,110) -- (140,110.14) -- cycle ; \draw   (140.13,125.14) .. controls (140.09,125.14) and (140.04,125.14) .. (140,125.14) .. controls (131.72,125.14) and (125,118.42) .. (125,110.14) .. controls (125,110.09) and (125,110.05) .. (125,110) ;  
\draw    (140.13,125.14) -- (470.01,125) ;
\draw  [draw opacity=0] (470.01,125) .. controls (478.24,125.06) and (484.9,131.75) .. (484.9,140) .. controls (484.9,148.25) and (478.24,154.94) .. (470,155) -- (469.9,140) -- cycle ; \draw   (470.01,125) .. controls (478.24,125.06) and (484.9,131.75) .. (484.9,140) .. controls (484.9,148.25) and (478.24,154.94) .. (470,155) ;  
\draw    (470,155) -- (393,155) ;
\draw [shift={(390,155)}, rotate = 360] [fill={rgb, 255:red, 0; green, 0; blue, 0 }  ][line width=0.08]  [draw opacity=0] (8.04,-3.86) -- (0,0) -- (8.04,3.86) -- (5.34,0) -- cycle    ;
\draw   (70.27,140) -- (170,140) -- (170,170) -- (70.27,170) -- cycle ;
\draw   (400,182) -- (490,182) -- (490,202) -- (400,202) -- cycle ;
\draw    (445,182) -- (445,157) ;
\draw [shift={(445,154)}, rotate = 90] [fill={rgb, 255:red, 0; green, 0; blue, 0 }  ][line width=0.08]  [draw opacity=0] (8.04,-3.86) -- (0,0) -- (8.04,3.86) -- (5.34,0) -- cycle    ;
\draw    (290,155) -- (173,155) ;
\draw [shift={(170,155)}, rotate = 360] [fill={rgb, 255:red, 0; green, 0; blue, 0 }  ][line width=0.08]  [draw opacity=0] (8.04,-3.86) -- (0,0) -- (8.04,3.86) -- (5.34,0) -- cycle    ;
\draw   (195,175) -- (265,175) -- (265,205) -- (195,205) -- cycle ;
\draw    (230,175) -- (230,158) ;
\draw [shift={(230,155)}, rotate = 90] [fill={rgb, 255:red, 0; green, 0; blue, 0 }  ][line width=0.08]  [draw opacity=0] (8.04,-3.86) -- (0,0) -- (8.04,3.86) -- (5.34,0) -- cycle    ;

\setstretch{0.8}
\draw (55,82) node [anchor=north west][inner sep=0pt]   [align=left] {\begin{minipage}[lt]{100pt}\setlength\topsep{0pt}
\begin{center}
{\footnotesize Theorem~\ref{thm:main}}\\{\footnotesize LE in modules~\cite{dong2025s}}
\end{center}

\end{minipage}};
\draw (266,82) node [anchor=north west][inner sep=0.75pt]   [align=left] {\begin{minipage}[lt]{110pt}\setlength\topsep{0pt}
\begin{center}
{\footnotesize Theorem~\ref{thm:local}, LE in}\\{\footnotesize modules over local rings}
\end{center}

\end{minipage}};
\draw (475,82) node [anchor=north west][inner sep=0.75pt]   [align=left] {\begin{minipage}[lt]{100pt}\setlength\topsep{0pt}
\begin{center}
{\footnotesize Theorem~\ref{thm:DM}}\\{\footnotesize LE in fields~\cite{derksen2012linear}}
\end{center}

\end{minipage}};
\draw (512,114) node [anchor=north west][inner sep=0.75pt]   [align=left] {{\footnotesize (blackbox)}};
\draw (403,44) node [anchor=north west][inner sep=0.75pt]   [align=left] {{\footnotesize Proposition~\ref{prop:internal_rep}}};
\draw (187,44) node [anchor=north west][inner sep=0.75pt]   [align=left] {{\footnotesize Proposition~\ref{prop:decompose_localize}}};

\draw (291,141) node [anchor=north west][inner sep=0.75pt]   [align=left] {\begin{minipage}[lt]{70pt}\setlength\topsep{0pt}
\begin{center}
{\footnotesize LRS in modules}\\{\footnotesize over local rings}
\end{center}

\end{minipage}};
\draw (68,141) node [anchor=north west][inner sep=0.75pt]   [align=left] {\begin{minipage}[lt]{75pt}\setlength\topsep{0pt}
\begin{center}
{\footnotesize Theorem~\ref{thm:LRS}}\\{\footnotesize LRS in modules}
\end{center}

\end{minipage}};
\draw (403,186) node [anchor=north west][inner sep=0.75pt]   [align=left] {{\footnotesize Proposition~\ref{prop:seperable_pol}}};
\draw (195,177) node [anchor=north west][inner sep=0.75pt]   [align=left] {\begin{minipage}[lt]{50pt}\setlength\topsep{0pt}
\begin{center}
{\footnotesize Lemma~\ref{lem:split_extension}}\\{\footnotesize Lemma~\ref{lem:decompose_LRS}}
\end{center}

\end{minipage}};

\end{tikzpicture}

}

\usepackage{setspace}
\usepackage[margin=1in]{geometry}
\usepackage{footmisc}
\usepackage{enumitem}
\usepackage{xcolor}
\usepackage{array}
\usepackage{amsthm}
\usepackage{amsmath}
\usepackage{soul}
\numberwithin{equation}{section}
\usepackage{amssymb}
\usepackage{amsfonts}
\usepackage{graphicx}
\usepackage{thm-restate}
\usepackage[colorlinks = true]{hyperref}
\hypersetup{
    linkcolor=black,
    citecolor=blue,
    urlcolor=blue}
\hypersetup{hypertexnames=false}

\usepackage{mathtools}
\usepackage{mathrsfs}
\usepackage{changepage}

\usepackage{tikz}
\usetikzlibrary{positioning,calc,arrows,automata,shapes,shapes.geometric,decorations.pathmorphing}
\usetikzlibrary{shapes}
\usepackage{tikz-cd}

\allowdisplaybreaks

\usepackage{titlesec}
\titleformat{\section}
  {\normalfont\Large\bfseries}
  {\thesection}
  {0.65em}
  {}
\titleformat{\subsection}[runin]
       {\normalfont\bfseries}
       {\thesubsection}
       {0.5em}
       {}
       [.]

\newcommand{\Z}{\mathbb{Z}}
\newcommand{\N}{\mathbb{N}}
\newcommand{\Q}{\mathbb{Q}}

\newcommand{\F}{\mathbb{F}}
\newcommand{\mZ}{\mathfrak{Z}}
\newcommand{\mI}{\mathcal{I}}

\newcommand{\ba}{\boldsymbol{a}}
\newcommand{\bb}{\boldsymbol{b}}

\newcommand{\bd}{\boldsymbol{d}}
\newcommand{\bs}{\boldsymbol{s}}
\newcommand{\bt}{\boldsymbol{t}}

\newcommand{\bj}{\boldsymbol{j}}
\newcommand{\by}{\boldsymbol{y}}
\newcommand{\bx}{\boldsymbol{x}}
\newcommand{\br}{\boldsymbol{r}}

\newcommand{\bh}{\boldsymbol{h}}

\newcommand{\mP}{\mathcal{P}}

\newcommand{\gen}[1]{\langle {#1} \rangle}

\newcommand{\frp}{\mathfrak{p}}
\newcommand{\frm}{\mathfrak{m}}

\newcommand{\hooklongrightarrow}{\lhook\joinrel\longrightarrow}

\newtheorem{theorem}{Theorem}[section]
\newtheorem{lemma}[theorem]{Lemma}
\newtheorem{proposition}[theorem]{Proposition}
\newtheorem{observation}[theorem]{Observation}

\theoremstyle{definition}
\newtheorem{definition}[theorem]{Definition}
\theoremstyle{definition}
\newtheorem{example}[theorem]{Example}
\theoremstyle{definition}
\newtheorem{remark}[theorem]{Remark}

\begin{document}

\title{Skolem-Mahler-Lech in rings of positive characteristic: a shorter proof and a multi-dimensional generalization}
\author{Ruiwen Dong\footnote{Magdalen College, University of Oxford, United Kingdom, email: ruiwen.dong@magd.ox.ac.uk} \and Doron Shafrir\footnote{Department of Mathematics, Ben-Gurion University of the Negev, Be’er Sheva, Israel}}
\date{ }

\maketitle
\begin{abstract}
    Let $R$ be a commutative ring and $f(a_1, \ldots, a_n) = \sum_{i=1}^k r_{i1}^{a_1} \cdots r_{in}^{a_n} m_i$ be a linear-exponential map over an $R$-module $M$.
    Dong and Shafrir (2026) showed that, when $\ell M = 0$ for some $\ell \in \mathbb{N}_{>0}$, the zero set of $f$ is the intersection of effectively computable $p$-normal sets, where $p$ ranges over the prime divisors of $\ell$.
    This generalizes an earlier theorem of Derksen and Masser (2012) on the solution set of $S$-unit equations over fields of positive characteristic.
    The purpose of this paper is twofold.
    First, we give a shorter proof of Dong and Shafrir's result, using the theorem of Derksen-Masser as a blackbox.
    Our proof also yields a decomposition of the zero set as a positive Boolean combination of affine transformations of zero sets of linear-exponential equations over fields.
    Second, we prove a multi-dimensional generalization of the Skolem-Mahler-Lech theorem over rings of finite characteristic.
    Specifically, we show that the zero set of every $n$-dimensional linear recurrence sequence over an $R$-module $M$ satisfying $\ell M = 0$ is the intersection of effectively computable $p$-normal sets (in $\mathbb{N}^n$), where $p$ ranges over the prime divisors of $\ell$.
    For example, this gives a decision procedure for whether two classical linear recurrence sequences have a common value over a ring of characteristic $p^a$ or $p^a q^b$, where $p$ and $q$ are primes.
\end{abstract}

\section{Introduction}
\subsection{Linear-exponential equation over fields}
Linear-exponential equations have a long history dating back to the early work of Thue, Mahler, and Siegel on Diophantine approximation and transcendental number theory.
They appear throughout mathematics and computer science, including areas such as program analysis, arithmetic theories, analytic combinatorics, and cryptography~\cite{10.5555/1481045, flajolet2009analytic, lipton2022skolem, hieronymi2022strong, ibrahim2024positivity}.
See~\cite{evertse2015unit, evertse2022effective} for monographs on this topic.

For a field $F$ we denote by $F^{\times}$ the set of all non-zero elements of $F$.
A \emph{linear-exponential equation} over $F$ is an equation of the form
\begin{equation}\label{eq:lin_exp_equation}
r_{11}^{a_1} \cdots r_{1n}^{a_n} \cdot m_1 + r_{21}^{a_1} \cdots r_{2n}^{a_n} \cdot m_2 + \cdots + r_{k1}^{a_1} \cdots r_{kn}^{a_n} \cdot m_k = 0,
\end{equation}
where $a_1, \ldots, a_n \in \Z$ are the variables, and $r_{ij} \in F^{\times}, m_i \in F$, are fixed coefficients.
For example, the equation $2^{a} 3^{b} - 4^{a} 5^{b} + \left(\frac{1}{6}\right)^{a} 7^{b} = 0$ is a linear-exponential equation over the field $\Q$.

By setting appropriate coefficients $r_{ij}$ to 1, one can separate variables or include constants in Equation~\eqref{eq:lin_exp_equation}.
For example, $2^a 1^b - 1^a 3^b - 1^a 1^b = 0$ recovers the classical Diophantine equation $2^a - 3^b = 1$.
Therefore, this definition is general enough to cover other variants of linear-exponential form, such as the well-known \emph{S-unit equations} in number theory.


The solution set of Equation~\eqref{eq:lin_exp_equation} is precisely the zero set of the \emph{linear-exponential map} $f$:
\begin{equation}\label{eq:lin_exp_map}
f(\ba) \coloneqq \br_1^{\ba} m_1 + \br_2^{\ba} m_2 + \cdots + \br_k^{\ba} m_k,
\end{equation}
where we denote $\br_i^{\ba} \coloneqq r_{i1}^{a_1} r_{i2}^{a_2} \cdots r_{in}^{a_n}$ for tuples $\br_i = (r_{i1}, \ldots, r_{in})\in (F^{\times})^n$, $\ba = (a_1, \ldots, a_n)\in\Z^n$.

When $F$ is a field of characteristic $0$, the \emph{subspace
theorem} can be used to prove that the zero set 
\[
\mZ(f) \coloneqq \{\ba \in \Z^n \mid f(\ba) = 0\}
\]
of a linear-exponential map $f$ is a finite union of affine subspaces of $\Z^n$~\cite{Evertse1984,PoortenSchlickewei1991}.
However, all known versions of the subspace theorem are ineffective: there is no known algorithm for computing these subspaces, or even for deciding whether $\mZ(f)$ is empty.

When $F$ is a field of characteristic $p > 0$, Derksen and Masser give the following characterization of the zero set $\mZ(f)$:

\begin{restatable}[Linear-exponential equation in fields, Derksen-Masser~\cite{derksen2012linear}]{theorem}{thmfield}\label{thm:DM}
 Let $F$ be a field of characteristic $p > 0$, and let $f(\ba)=\sum_{i = 1}^k\br_i^{\ba} m_i$, where $\br_1, \ldots, \br_k\in (F^{\times})^n, m_1, \ldots, m_k\in F$. 
 Then, the zero set $\mZ(f)$ is effectively $p$-normal.\footnote{This formulation is a minor variant of the statement in~\cite{derksen2012linear}, which considers the S-unit sum $f(\ba_1, \ldots, \ba_k) = \sum_{i = 1}^k\br^{\ba_i} m_i$. 
 Our statement can be obtained by taking $\br$ in this expression to be the concatenation $(\br_1, \ldots, \br_k) \in (F^{\times})^{k n}$, so $f(\ba_{11}, \ba_{12}, \ldots, \ba_{kk}) = \sum_{i = 1}^k\br_1^{\ba_{i1}} \cdots \br_k^{\ba_{ik}} x_i$, then restricting the variables to the diagonal $\ba_{11} = \ba_{22} = \cdots = \ba_{kk} = \ba$, and $\ba_{ij} = 0$ for $i \neq j$.
 This preserves the $p$-normal structure of the zero set (see~Lemma~\ref{lem:preimage_normal}). Similarly, the formulation of Theorem~\ref{thm:main} below is a variant of~\cite{dong2025s}.}
\end{restatable}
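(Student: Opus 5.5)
The statement is a reformulation of the Derksen--Masser theorem, and the footnote already indicates the route: reduce it to the original S-unit-sum version in~\cite{derksen2012linear} via a linear substitution of variables, then use closure of $p$-normal sets under preimages of affine maps (Lemma~\ref{lem:preimage_normal}). I will take as black box the original form. For a finitely generated subgroup $G = \langle g_1, \ldots, g_N\rangle \le F^{\times}$ and the map $g(\bb_1, \ldots, \bb_k) = \sum_{i=1}^k \bs^{\bb_i} x_i$ with $\bb_i \in \Z^N$ and $\bs = (g_1,\dots,g_N)$, the zero set in $\Z^{kN}$ is effectively $p$-normal.

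\textbf{Step 1: set up the concatenated map.} Let $\bs = (\br_1, \ldots, \br_k) \in (F^{\times})^{kn}$ and $x_i = m_i$. Write the variables as $\bb_i = (\ba_{i1}, \ldots, \ia_{ik})$, $\ba_{ij}\in\Z^n$, so that $\bs^{\bb_i} = \prod_j \br_j^{\ba_{ij}}$.

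\textbf{Step 2: restrict to the diagonal.} Define the $\Z$-linear map $\phi:\Z^n\to\Z^{k\cdot kn}$ by $\ba_{ii}=\ba$ and $\ba_{ij}=0$ for $i\ne j$. Then $g\circ\phi = f$, since $\bs^{\phi(\ba)_i} = \br_i^{\ba}$. Hence $\mZ(f) = \phi^{-1}(\mZ(g))$.

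\textbf{Step 3: pull back.} Lemma~\ref{lem:preimage_normal} says that preimages of $p$-normal sets under affine maps $\Z^n\to\Z^{d}$ are $p$-normal, and effectively so. Applying it to $\phi$ shows that $\mZ(f)$ is effectively $p$-normal. The effectivity also requires that the description of $\mZ(g)$ produced by Derksen--Masser be computable from $(\br_i, m_i)$ as input data; this is part of their theorem.

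\textbf{Main obstacle.} The delicate step is matching conventions. Derksen--Masser state the theorem for generators of a group, while here the $r_{ij}$ may satisfy multiplicative relations or repeat across different $i$. Their result does not need the generators to be independent, since $\bs$ is an arbitrary tuple of nonzero elements, so this causes no problem. I would also verify that their definition of $p$-normal (finite unions of $p$-succinct sets built from sums $\sum_j c_j p^{k_j}$ inside affine lattices) coincides with the paper's definition, so that the preimage lemma applies verbatim.
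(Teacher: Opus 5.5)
Your proposal is correct and follows the paper's own argument, which is just the footnote to the statement: concatenate $\br_1,\ldots,\br_k$ into one tuple, write $f$ as the original Derksen--Masser S-unit sum composed with the diagonal $\Z$-linear map $\ba \mapsto (\ba_{ij})$ (with $\ba_{ii}=\ba$ and $\ba_{ij}=0$ for $i\neq j$), and pull back using Lemma~\ref{lem:preimage_normal}. Like you, the paper takes the original Derksen--Masser theorem as a blackbox and does not separately check that its conventions match.
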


Roughly speaking, a \emph{$p$-normal set} is a finite union of subsets of $\Z^n$ whose elements are described by linear expressions involving integer variables and powers of $p$. For example, $\{(x+p^{n}, 2x) \mid x \in \Z, n \in \N\} \subseteq \Z^2$ is a $p$-normal set.
See Definition~\ref{def:pnormal} for the formal definition.
By \emph{effective} we mean that there is an algorithm that computes $\mZ(f)$ from the input $\br_1, \ldots, \br_k, m_1, \ldots, m_k$, under the standard computability assumptions on the field $F$~\cite{baumslag1981computable}.
In particular, this means we can algorithmically decide whether $\mZ(f)$ is empty.

The following classic example illustrates the theorem of Derksen-Masser.

\begin{example}[{\cite[Example~1.3]{derksen2007skolem}}]\label{example:Derksen}
    Let $F$ be the field of rational functions $\F_2(X)$.
    Consider the linear-exponential map $f \colon \Z \rightarrow \F_2(X)$,
    \begin{equation}\label{eq:exampleDerksen}
    f(a) \coloneqq (X+1)^a - X^a - 1^a.
    \end{equation}
    Then the zero set $\mZ(f)$ is $\{2^k \mid k \in \N\}$.
    
    Indeed, if $a = 2^k$ for some $k \in \N$, then applying the identity $(X+1)^2 = X^2 + 1$ (which holds in characteristic $2$), repeatedly $k$ times, we obtain $(X+1)^{2^k} = X^{2^k} + 1$.
    Therefore $f(2^k) = 0$.
    On the other hand, suppose $f(a) = 0$, it is easy to see that $a \geq 0$.
    If $a = 2^k + b$ for some $1 \leq b \leq 2^k-1$, then the expansion of $(X+1)^a = (X^{2^k}+1)(X+1)^b$ must contain more than two monomials, so it cannot be equal to $X^a + 1^a$.
    We conclude that $\mZ(f) = \{2^k \mid k \in \N\}$, which is a 2-normal set.
    \hfill $\blacksquare$
\end{example}

\subsection{Linear-exponential equation over rings and modules}
The definition~\eqref{eq:lin_exp_map} of a linear-exponential map naturally extends from fields to rings and modules.
All rings considered in this paper are commutative.
For a commutative ring $R$, denote by $R^{\times}$ its set of invertible elements.
A \emph{linear-exponential map} over an $R$-module $M$, is a map $f \colon \Z^n \rightarrow M$ of the form
$
    f(\ba)=\sum_{i = 1}^k\br_i^{\ba} m_i
$,
where $\br_1,\ldots,\br_k\in (R^{\times})^n$, and $m_1,\ldots,m_k\in M$.
In particular, if we take $M$ to be the $R$-module $R$, where $R$ is a field, then we recover the definition of linear-exponential maps over fields.

Linear-exponential equations over rings and modules are much more expressive than their field counterparts.
They naturally arise in areas such as symbolic computation, coding theory, and algorithmic group theory, where the underlying structure cannot be embedded into fields~\cite{AugotBardetFaugere2007, dong2025submonoid}. 
For example, the problem of finding sparse polynomials in an ideal $I$ can be formulated as a linear-exponential equation over the $R$-module $R/I$, where $R$ is a polynomial ring~\cite{jensen2017finding}.

When $R$ is a ring of characteristic $0$, the zero set $\mZ(f)$ can be incomputable in general.
More precisely, there are finitely presented modules $M$ over the Laurent polynomial ring $R = \Z[X, X^{-1}]$, where it is undecidable whether a given linear-exponential map $f\colon \Z^n \rightarrow M$ has an empty zero set $\mZ(f)$~\cite{Dong2025LinearEW}.

When $R$ is a ring of characteristic $\ell > 0$, or more generally, when $\ell M = 0$ for $\ell > 0$, Dong and Shafrir~\cite{dong2025s} proved the following generalization of Derksen-Masser (Theorem~\ref{thm:DM}):

\begin{restatable}[Linear-exponential equation in modules~\cite{dong2025s}]{theorem}{thmmain}\label{thm:main}
    Let $R$ be a commutative ring.
    Let $M$ be an $R$-module such that $\ell M=0$ for some $\ell \in\N_{>0}$, and let $f(\ba)=\sum_{i = 1}^k\br_i^{\ba} m_i$, where $\br_1,\ldots,\br_k\in (R^{\times})^n, m_1,\ldots,m_k\in M$.
    Then we can write $\mZ(f) = \bigcap_{p|\ell}S_p$, where $p$ runs over the prime factors of $\ell$ and $S_p$ is effectively $p$-normal.
\end{restatable}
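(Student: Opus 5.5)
The plan follows the roadmap in Figure~\ref{figureroadmap}: reduce to modules over local rings, then to fields, and use Theorem~\ref{thm:DM} as a blackbox.

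\textbf{Reductions.} First, replace $R$ by the subring generated by the entries of the $\br_i$, their inverses, and $1$. Replace $M$ by the submodule generated by $m_1,\ldots,m_k$. Then $R$ is a finitely generated $\Z/\ell\Z$-algebra, hence Noetherian, and $M$ is a finitely generated $R$-module. Since $\ell M=0$, the Chinese remainder theorem splits $M=\bigoplus_{p\mid\ell} M_p$ with $p^{a_p}M_p=0$. The map $f$ vanishes iff each component $f_p$ vanishes, so $\mZ(f)=\bigcap_p \mZ(f_p)$. It therefore suffices to show that $\mZ(f_p)$ is effectively $p$-normal when $p^aM=0$.

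Next, $M$ has finitely many associated primes. By Proposition~\ref{prop:decompose_localize}, $M$ embeds into a finite product of localizations $M_{\frp}$ (at the associated or maximal ideals), and these can be computed. So $\mZ(f)$ is a finite intersection of the zero sets over local rings $(R_\frp,\frp)$. Since finite intersections of $p$-normal sets are $p$-normal, this reduces to Theorem~\ref{thm:local}, the local case.

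\textbf{Local case.} Now take $(R,\frm)$ local Noetherian and $M$ finitely generated with $\frm^N M=0$ for some $N$. This holds after localizing at a minimal associated prime of an Artinian-type quotient; in general I would handle it via a primary decomposition of $0\subseteq M$, giving $M\hookrightarrow \prod M/Q_j$ with each $M/Q_j$ coprimary. I will assume for this sketch that $M$ has finite length. Filter $M\supseteq \frm M\supseteq\cdots\supseteq \frm^N M=0$. Each quotient $\frm^jM/\frm^{j+1}M$ is a finite-dimensional vector space over the residue field $\kappa=R/\frm$, of characteristic $p$.

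Induct on $N$. A solution $\ba$ must satisfy $f(\ba)\in \frm M$. The image $\bar f$ of $f$ in $M/\frm M\cong\kappa^d$ is coordinatewise a linear-exponential map over $\kappa$, so $\mZ(\bar f)$ is $p$-normal by Theorem~\ref{thm:DM}. The difficulty is passing to the next layer, because a $p$-normal set is not a lattice and $f$ restricted to it is not linear-exponential.

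This is where Proposition~\ref{prop:internal_rep} should enter. Write each $r_{ij}=u_{ij}(1+x_{ij})$, where $u_{ij}$ is a Teichmüller-type lift (or a suitable unit after a finite extension) and $x_{ij}\in\frm$. Since $p^aM=0$ and $\frm^NM=0$, there is $e$ such that $(1+x)^{p^e}$ acts trivially on $M$. Then $(1+x)^{a}$ depends only on $a \bmod p^e$, and for fixed residues it is a polynomial in the base-$p$ digits. Splitting $\Z^n$ into the cosets of $p^e\Z^n$ gives, on each coset, a map of the form $\sum_i \bs_i^{\ba} m_i'$ where the $\bs_i$ have entries in a subfield-like set of units.

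The internal representation should make $M$ a module over the field-like structure generated by these Teichmüller parts. Concretely, $M$ then becomes a finite-dimensional vector space (or a module over a field $F\supseteq\kappa$ via a section). Choosing a basis turns $f$ into finitely many linear-exponential maps over $F$, and Theorem~\ref{thm:DM} applies to each coordinate. Pulling back along the affine maps $\ba\mapsto p^e\ba+\bb$ preserves $p$-normality (Lemma~\ref{lem:preimage_normal}). A finite union over cosets and a finite intersection over coordinates then give the result.

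\textbf{Main obstacle.} The main obstacle is constructing this section: a subfield of $R$ (or of a finite étale extension of $R$) that contains the unit parts, over which $M$ is a vector space. Separability issues in lifting the residue field are the core of the problem. The first thing I would try is Hensel lifting of the minimal polynomials of the residues of the $r_{ij}$, after replacing them by $p^e$-th powers to kill the unipotent part; this is presumably what Proposition~\ref{prop:seperable_pol} provides. For inseparable residue extensions I would instead pass to $r_{ij}^{p^e}$, which amounts to handling the cosets above.
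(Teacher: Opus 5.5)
\textbf{There is a genuine gap in the local case.}
Your reductions to a local ring $(R,\frm)$ with $\frm^tM=0$ are fine and agree with Proposition~\ref{prop:decompose_localize}. The gap sits exactly where the theorem is hard, namely when $pM\neq 0$ (e.g.\ $R=M=\Z_{/4}(X)$). There $M$ is not a vector space over any field. In an $F$-vector space all nonzero vectors have the same additive order, while $M$ has elements of order $p$ and of order $p^2$. Likewise, no ring containing $R$ has a subfield, since its prime subring is $\Z_{/p^a}$ with $a\geq 2$. So the step ``choose a basis and apply Theorem~\ref{thm:DM} coordinatewise'' cannot be carried out, whatever section or \'etale extension you pick.

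The Teichm\"uller step is also problematic. Residue fields like $\F_2(X)$ are imperfect, so no canonical lift exists. Even a multiplicative lift is not additive, so sums $\sum_i \boldsymbol{u}_i^{\ba}m_i'$ produce carries into $\frm M$. That is precisely the layer-passing difficulty you identified and then sidestepped. In equal characteristic your idea does essentially work: if $p=0$ in $R$, then $r\mapsto r^{p^{t-1}}$ is a ring endomorphism killing $\frm$. Its image is a subfield $K\subseteq R$ over which $M$ is finite-dimensional (the residue field being finitely generated over $\F_p$), and on cosets of $p^{t-1}\Z^n$ all bases lie in $K$. But that is not the hard case.

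\textbf{The missing idea} is the Hensel-type internal representation, Proposition~\ref{prop:internal_rep}, used inside an induction on $t$.
\begin{enumerate}[nosep, label=(\arabic*)]
    \item Set $q=p^{t-1}$ and $F=R/\frm$, and restrict to cosets $q\ba+\bb$, so that $f(\ba)=\sum_i\br_i^{q\ba}m_i$.
    \item Then $f(\ba)\in\frm M$ if and only if $(\overline{\br_1^{\ba}},\ldots,\overline{\br_k^{\ba}})$ lies in $H=\{(x_1,\ldots,x_k)\in F^k : \sum_i x_i^q\overline{m_i}=0\}$. This $H$ is an $F$-linear subspace, because $x\mapsto x^q$ is additive in characteristic $p$.
    \item Express the dependent coordinates of $H$ linearly in the independent ones. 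By Lemma~\ref{lem:lifting} ($r\equiv s\pmod{\frm}\Rightarrow r^q=s^q$ when $\frm^t=0$), you may then substitute $\br_j^{q\ba}=\bigl(\sum_{i\le d}s_{ij}\br_i^{\ba}\bigr)^q$ whenever $f(\ba)\in\frm M$.
    \item After expanding, the cross terms carry a factor $p$. The remaining coefficients $m_i+\sum_j s_{ij}^qm_j$ lie in $\frm M$, because the corresponding basis vectors lie in $H$.
    \item This gives a linear-exponential $\tilde f$ internal to $\frm M$ with $\mZ(f)=\mZ(f\bmod\frm M)\cap\mZ(\tilde f)$.
\end{enumerate}
Induction on $t$ then reduces everything to the case $\frm M=0$. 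There $M$ genuinely is a vector space over $R/\frm$, and Theorem~\ref{thm:DM} applies.

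Proposition~\ref{prop:seperable_pol}, which you invoke, plays no role here; it is only used for the recurrence-sequence theorem.
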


For example, Theorem~\ref{thm:main} shows that the solution set of a linear-exponential equation over the ring $\Z_{/12}[X, X^{-1}, (X+1)^{-1}]$ (viewed as a module over itself) is the intersection of a $2$-normal set and a $3$-normal set.
The word \emph{effective} means that there is an algorithm that computes each component $S_p$ for $p \mid \ell$, assuming standard computability conditions on $R$ and $M$~\cite{baumslag1981computable} (see Section~\ref{subsec:prelim_module}).
However, this does not give a procedure for deciding whether the intersection $\bigcap_{p|\ell}S_p$ is empty: 
this reduces to the existential first-order theory of the structure $\gen{\Z; 0, 1, +, \left(p^{\N}\right)_{p \mid \ell}}$, and is only known to be decidable in special cases such as when $\ell$ has at most two different prime divisors~\cite{karimov2025decidability} or when $n=1$~\cite{dong2025skolemproblemringspositive}.
See Section~\ref{subsec:pnormal_N} for more discussion on this problem.

By the Chinese Remainder Theorem, Theorem~\ref{thm:main} essentially reduces to the case where $\ell = p^e$ for prime $p$ and integer $e \geq 1$.
The main difficulty arises when $e > 1$, as one cannot view $M$ with $pM \neq 0$ as a linear space over a field of characteristic $p$.
Notably, Example~\ref{example:Derksen} fails on the ring $\Z_{/4}[X, X^{-1}, (X+1)^{-1}]$, since $(X+1)^2 \neq X^2+1$ in characteristic $4$.

To prove Theorem~\ref{thm:main}, the approach of~\cite{dong2025s} is to analyse the proof of Theorem~\ref{thm:DM} by Derksen and Masser step by step, and generalize each of its ingredients from fields to modules. 
These generalizations are highly non-trivial and use a combination of automata theory and non-commutative algebra.
Since the proof of the original Derksen-Masser theorem~\cite{derksen2012linear} is already rather intricate, this approach yields a substantially longer and very involved proof.

The first purpose of this paper is to provide a significantly shorter and comparatively elementary proof of Theorem~\ref{thm:main}.
Our approach treats the Derksen-Masser theorem as a \emph{blackbox}, rather than unpacking every component of its proof.
More precisely, we show that the zero set of a linear-exponential map over an $R$-module $M$ can be expressed as a positive Boolean combination of sets of the form $p^t \cdot \mZ(f) + \bb$, where $t \in \N, \bb \in \Z^n$, and $f$ is a linear-exponential map over a \emph{field} of characteristic $p$.
This allows us to apply Derksen-Masser on each $\mZ(f)$.
This explicit decomposition is a new consequence of our approach and does not seem accessible via the proof in~\cite{dong2025s}.
Our key step is Proposition~\ref{prop:internal_rep}, which may be viewed as a linear-exponential analogue of \emph{Hensel lifting}: the classical technique in number theory for lifting solutions of polynomial equations modulo $p$ to modulo $p^e$. 
Beyond its role in the proof, Proposition~\ref{prop:internal_rep} may be of independent interest.
A concrete example (Example~\ref{example:continued}) will be given to illustrate its idea.

\subsection{The Skolem Problem and a multi-dimensional generalization}\label{subsec:intro_Skolem}

A prominent application of linear-exponential equations is the \emph{Skolem Problem}, which asks whether a linear recurrence sequence contains a zero term.
Here, a sequence $\gamma(0), \gamma(1), \gamma(2), \ldots$ is called a \emph{linear recurrence sequence} if there exist $d \geq 1$ and constants $c_1, \ldots, c_{d}$, such that
\begin{equation}\label{eq:recurrence}
    \gamma(n) = c_1 \cdot \gamma(n-1) + \cdots + c_d \cdot \gamma(n-d), \quad \text{ for all } n \geq d.
\end{equation}
Note that $\gamma$ is uniquely determined by the recurrence relation~\eqref{eq:recurrence} as well as its $d$ initial terms $\gamma(0), \gamma(1), \ldots, \gamma(d-1)$.

Over a field of characteristic $0$, decidability of the Skolem Problem  is a longstanding open problem.
The celebrated theorem of Skolem-Mahler-Lech~\cite{skolem1934verfahren, Mahler1935, lech1953note} shows that the zero set of a linear recurrence sequence over a field of characteristic $0$ is a union of a finite set and finitely many arithmetic progressions.
However, the Skolem-Mahler-Lech theorem is not effective, and there is no known algorithm that decides whether the zero set is empty.

Over a field of characteristic $p > 0$, the Skolem Problem is decidable by a result of Derksen:

\begin{theorem}[Skolem-Mahler-Lech in fields of positive characteristic~\cite{derksen2007skolem}]\label{thm:LRS_field}
 Let $F$ be a field of characteristic $p > 0$, and $\gamma \colon \N \rightarrow F$ be a linear recurrence sequence. Then the zero set $\mZ(\gamma)$ is effectively $p$-normal in $\N$.
\end{theorem}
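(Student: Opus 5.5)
The plan is to reduce Theorem~\ref{thm:LRS_field} to the Derksen--Masser theorem (Theorem~\ref{thm:DM}) with $n = 1$, via a closed form for $\gamma$. Write the recurrence~\eqref{eq:recurrence} in companion-matrix form: $\gamma(a) = u^{\top} A^{a} v$ for all $a \in \N$, where $A \in F^{d \times d}$ is the companion matrix of $c_1, \ldots, c_d$ and $u, v \in F^d$ encode the initial terms. First dispose of a nilpotent part. Take the Fitting decomposition $F^d = V_0 \oplus V_1$, where $A$ is nilpotent on $V_0$ and invertible on $V_1$. Then $A^a$ vanishes on $V_0$ for $a \geq d$. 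So $\mZ(\gamma)$ is a finite set (computable by evaluating $\gamma(0), \ldots, \gamma(d-1)$) union the set of $a \geq d$ with $u^\top A^a v_1 = 0$, where $v_1$ is the $V_1$-component of $v$. Finite sets and finite unions preserve $p$-normality. Hence we may assume $A$ is invertible, that is, $c_d \neq 0$.

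Next, pass to a finite extension $K \supseteq F$ containing all eigenvalues $\lambda_1, \ldots, \lambda_s \in K^\times$ of $A$; this extension is computable under the standard assumptions. Over $K$ we get the generalized power sum
\[
\gamma(a) = \sum_{j=1}^{s} P_j(a)\,\lambda_j^{a},
\]
with polynomials $P_j \in K[x]$ of degree less than the multiplicity of $\lambda_j$. The polynomial coefficients are the main obstacle, since Theorem~\ref{thm:DM} only handles sums of pure exponentials. In characteristic $p$, though, $P_j(a)$ depends only on the residue of $a$ modulo $p$; more precisely, binomial coefficients $\binom{a}{i}$ with $i < d$ depend only on $a \bmod p^{t}$ once $p^t > d$. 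So fix $q = p^t > d$ and split $\N$ into the residue classes $a = qb + c$ for $0 \leq c < q$. On each class, expand $P_j(qb+c)$ in the binomial basis $\binom{a}{i}$. Then $\gamma(qb+c) = \sum_j \mu_{j,c}\, (\lambda_j^{q})^{b}$ with constants $\mu_{j,c} \in K$. To justify this expansion I would use Lucas' theorem, or alternatively work directly with $A^{qb+c} = A^c (A^q)^b$ and the fact that $A^q = D^q$, where $D$ is the semisimple part in the Jordan--Chevalley decomposition ($N^q = 0$ and $D, N$ commute, so $(D+N)^q = D^q + N^q$). The second route is cleaner. It shows $\gamma(qb+c) = u^\top A^c D^{qb} v$, and since $D$ is diagonalizable over $K$ this is a linear-exponential map in $b \in \Z$ over the field $K$, of the form in Theorem~\ref{thm:DM} with $n=1$.

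Now apply Theorem~\ref{thm:DM} to each $f_c(b) \coloneqq u^\top A^c D^{qb} v$ to get an effectively $p$-normal set $\mZ(f_c) \subseteq \Z$. Then
\[
\mZ(\gamma) = \bigcup_{c=0}^{q-1} \bigl( (q\cdot \mZ(f_c) + c) \cap \N \bigr)
\]
(up to the finite initial segment from the nilpotent reduction). It remains to check that $p$-normal sets are closed under the affine maps $b \mapsto qb + c$, under intersection with $\N$, and under finite unions, all effectively. The affine map is immediate from the definition, since it keeps linear expressions in variables and powers of $p$. Intersection with $\N$ for one-dimensional $p$-normal sets is handled by the results on $p$-normal sets in Section~\ref{subsec:pnormal_N} and Definition~\ref{def:pnormal}. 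Assembling these gives an effectively $p$-normal description of $\mZ(\gamma)$ in $\N$.
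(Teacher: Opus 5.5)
Your proof is correct. Its skeleton matches the paper's proof of Theorem~\ref{thm:LRS}, which contains this statement as the case $n=1$, $M=R=F$. Both arguments:
\begin{itemize}
\item restrict to residue classes modulo a power $q$ of $p$ that dominates the root multiplicities, so each subsequence has a recurrence with simple roots and is a pure exponential sum;
\item split off the zero root as a finite initial segment (your Fitting decomposition plays the role of the paper's grid partition);
\item apply Derksen--Masser.
\end{itemize}
Your identity $A^q=D^q$ is the matrix form of Proposition~\ref{prop:seperable_pol} ($g(T)\mid\prod_i(T^{q}-s_i)$ with the $s_i$ distinct), and diagonalizing replaces the Vandermonde inversion.

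The substantive difference is where the roots live. You adjoin the eigenvalues in a genuine finite field extension $K$, so every subsequence is a linear-exponential map over a field and Theorem~\ref{thm:DM} applies at once. The paper adjoins roots via Lemma~\ref{lem:split_extension}, i.e.\ through $F[x]/\gen{g(x)}$. That ring is non-reduced as soon as $g$ has a repeated root. So even for a field input, the paper's proof as written passes through Artinian local rings with $\frm^t=0$, $t>1$, and needs the Hensel-type machinery of Theorem~\ref{thm:local}. Your route is shorter for fields, but it needs a factorization algorithm over $F$ to build $K$; this is available for finitely generated fields, to which you may restrict. The paper's route needs only primary decomposition and works uniformly for modules.

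Two small points:
\begin{itemize}
\item You never need the Jordan--Chevalley decomposition explicitly: since $A^q=D^q$, you can diagonalize $A^q$ over $K$ directly.
\item The representation $\gamma(a)=\sum_j P_j(a)\lambda_j^a$, with $P_j$ of degree less than the multiplicity, is false in characteristic $p$ once a multiplicity exceeds $p$. For example, $\binom{a}{p}$ over $\F_p$ has characteristic polynomial $(T-1)^{p+1}$, yet it takes different values at $a=0$ and $a=p$, so it is not a function of $a \bmod p$. The binomial-basis form you mention next is the correct one. Since your final argument uses the matrix route, this slip does not affect correctness.
\end{itemize}
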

The definition of $p$-normal sets in $\N$ is conceptually similar to $p$-normal sets in $\Z$, but there are some technical nuances: notably, it can \emph{not} be defined as the intersection of $\N$ with a $p$-normal set in $\Z$. The formal definition is given in Definition~\ref{def:pnormal_N}.

Over a ring of characteristic $\ell > 0$, a follow-up paper by Dong and Shafrir~\cite{dong2025skolemproblemringspositive} proved the following generalization of Theorem~\ref{thm:LRS_field}:

\begin{theorem}[Skolem-Mahler-Lech in rings of positive characteristic~\cite{dong2025skolemproblemringspositive}]\label{thm:LRS_ring}
 Let $R$ be a commutative ring of characteristic $\ell > 0$, and $\gamma \colon \N \rightarrow R$ be a linear recurrence sequence. Then we can write $\mZ(\gamma) = \bigcap_{p|\ell}S_p$, where $p$ runs over the prime factors of $\ell$ and $S_p$ is effectively $p$-normal in $\N$.
 Furthermore, it is decidable whether $\bigcap_{p|\ell}S_p$ is empty.
\end{theorem}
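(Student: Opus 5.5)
We derive Theorem~\ref{thm:LRS_ring} from its module counterpart, the one-dimensional case of Theorem~\ref{thm:LRS}, which in turn rests on Theorem~\ref{thm:main}. The decidability clause is then the one-dimensional decidability result for intersections of $p$-normal sets in $\N$.

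\textbf{Step 1: reduction to a linear-exponential map.} Write the recurrence~\eqref{eq:recurrence} as $\gamma(n) = u^{\top} A^n v$ with companion matrix $A \in R^{d\times d}$. Let $P(x) = x^d - c_1x^{d-1} - \cdots - c_d$ be its characteristic polynomial. Consider the $R$-algebra $B = R[x]/(P)$ and the $R$-linear map $\phi\colon B \to R$ sending $g(x)$ to the value that $g$ produces on $\gamma$, namely $x^j \mapsto \gamma(j)$. Then $\gamma(n) = \phi(x^n)$, so $\gamma(n)=0$ if and only if $x^n$ lies in the $B$-submodule $\ker^{B}\phi = \{b : \phi(bB)=0\}$. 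This is the module formulation: $\gamma(n)=0$ exactly when $x^n \cdot \bar 1 = 0$ in the $B$-module $M = B/\ker^B\phi$, and $\ell M = 0$.

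\textbf{Step 2: handling non-invertibility.} The element $x$ need not be a unit in $B$, so Theorem~\ref{thm:main} does not apply directly. Use Fitting's lemma on $M$, which is finitely generated over $R$ and on which multiplication by $x$ acts; to make it effective, localize at primes. We split $M = M_{\mathrm{nil}} \oplus M_{\mathrm{inv}}$, where $x$ is nilpotent on the first summand and invertible on the second; over a general ring this needs care, namely passing to $R$ localized or working with $x^N$ for large $N$. After $N$ steps the nilpotent part vanishes. Hence $\mZ(\gamma)$ agrees for $n \ge N$ with the zero set of $n \mapsto x^n m$ in $M_{\mathrm{inv}}$, where $x$ is invertible. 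This is a linear-exponential map, possibly after splitting $P$ into coprime factors via Lemma~\ref{lem:split_extension} and Lemma~\ref{lem:decompose_LRS}, which give the decomposition into unit and nilpotent parts.

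\textbf{Step 3: applying Theorem~\ref{thm:main}.} On the invertible part, Theorem~\ref{thm:main} gives a zero set $\bigcap_p S_p$ with each $S_p$ $p$-normal in $\Z$. Intersect each $S_p$ with $\N_{\ge N}$ and add the finitely many zeros below $N$, which are checked directly. Next show that $p$-normal sets in $\Z$ intersected with $\N_{\ge N}$, together with a finite set, can be rewritten as $p$-normal sets in $\N$ in the sense of Definition~\ref{def:pnormal_N}. A finite set $F$ is absorbed by setting $S'_p = (S_p\cap \N_{\ge N}) \cup F$; the intersection over $p$ is correct because $F \subseteq [0,N)$ and the remaining parts live in $[N,\infty)$.

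\textbf{Step 4: decidability and the main obstacle.} Decide emptiness of the one-dimensional intersection $\bigcap_p S_p$ using the known one-dimensional results. The main obstacle is this conversion: in one dimension, $p$-normal sets in $\N$ are finite unions of arithmetic progressions and sets of the form $\{c_0 + c_1p^{k}+c_2p^{k}\cdots\}$ along a single diagonal $k$. Emptiness of intersections across different primes then requires bounding solutions of equations like $a p^k + b = c q^m + d$. This is done effectively by Baker-type or elementary $S$-unit bounds in $\Z$, which handle the one-variable $S$-unit equations that arise.
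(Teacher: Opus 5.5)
Your opening reduction is right: since $\ell R=0$, the first assertion is exactly the case $n=1$, $M=R$ of Theorem~\ref{thm:LRS}, and invoking that theorem would suffice. The argument you then give for this case, however, breaks at Step~1. The condition $x^n\in\ker^B\phi$ means $\phi(x^{n+j})=\gamma(n+j)=0$ for \emph{all} $j\ge 0$. That says $\gamma$ vanishes on the whole tail from $n$ on, not that $\gamma(n)=0$.

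In fact no encoding of the form ``$\gamma(n)=0\iff x^n m=0$'' can be correct. The set $\{n\mid x^nm=0\}$ is always upward closed, and after your Fitting step, with $x$ invertible on $M_{\mathrm{inv}}$, it is empty or everything. By contrast, Example~\ref{example:Derksen} is a recurrence sequence over $\F_2(X)$ with zero set $\{2^k\mid k\in\N\}$. The true condition $\phi(x^n)=0$ is membership in the $R$-submodule $\ker\phi$, which is not a $B$-submodule, so Theorem~\ref{thm:main} does not apply to it.

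What is missing is a genuine multi-term representation $\gamma(a)=\sum_i s_i^a m_i$ with unit bases. Splitting the recurrence polynomial (Lemma~\ref{lem:split_extension}) and localizing (Lemma~\ref{lem:decompose_LRS}) do not give this. Roots that agree modulo $\frm$ make the Vandermonde matrix singular over the local ring, and exactly repeated roots give terms like $a\,s^a$. The paper's key step is Proposition~\ref{prop:seperable_pol}: $g(T)\mid\prod_i(T^{p^u}-s_i)$ with the $s_i$ pairwise distinct modulo $\frm$. On each residue class $a\equiv j\pmod{p^u}$ the subsequence then has roots distinct mod $\frm$, the Vandermonde matrix is invertible, and $\gamma$ becomes linear-exponential. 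At most one $s_i$ lies in $\frm$; it is nilpotent and is removed by the grid partition, which is the correct form of your Fitting step. Your Step~3 bookkeeping with the finite set $F$ is fine.

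The decidability clause is not proved in this paper at all. It is quoted from \cite{dong2025skolemproblemringspositive}, which rests on \cite{karimov2025decidability}, and your justification for it does not hold. Elementary $p$-nested sets $\{c_0+\sum_{i=1}^r c_ip^{ek_i}\mid k_i\in\N\}$ have \emph{independent} exponents $k_1,\dots,k_r$, not a single diagonal $k$. Intersecting across primes therefore leads to multi-term equations $c_0+\sum_i c_ip^{ek_i}=d_0+\sum_j d_jq^{e'm_j}$. These typically have infinite degenerate solution families, which two-term Baker bounds do not control. Even two primes need the result of \cite{karimov2025decidability}, and three or more need the further one-dimensional argument of \cite{dong2025skolemproblemringspositive}. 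The paper notes that the multi-dimensional analogue already contains open problems such as $3^a+5^b-7^c=1$. You may cite this clause, but your sketch would not establish it.
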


The second purpose of our paper is to prove a multi-dimensional generalization of Theorem~\ref{thm:LRS_ring}:

\begin{restatable}[Multi-dimensional Skolem-Mahler-Lech in modules]{theorem}{thmLRS}\label{thm:LRS}
    Let $R$ be a commutative ring.
    Let $M$ be an $R$-module such that $\ell M=0$ for some $\ell \in\N_{>0}$, and let $\gamma \colon \N^n \rightarrow M$ be an $n$-dimensional linear recurrence sequence.
    Then we can write $\mZ(\gamma) = \bigcap_{p|\ell}S_p$, where $p$ runs over the prime factors of $\ell$ and $S_p$ is effectively $p$-normal in $\N^n$.
\end{restatable}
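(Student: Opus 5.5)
Theorem~\ref{thm:LRS} will be reduced to the linear-exponential machinery (Theorem~\ref{thm:main} and its local-ring refinement, Theorem~\ref{thm:local}), following the roadmap figure.

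\textbf{Step 1: reduce to local rings.} An $n$-dimensional linear recurrence sequence over $M$ is given by commuting operators: there is a finitely generated $R$-module $N$, commuting endomorphisms $A_1,\dots,A_n$ of $N$, an element $v\in N$ and a linear map $\pi\colon N\to M$ such that $\gamma(\ba)=\pi(A_1^{a_1}\cdots A_n^{a_n}v)$. Equivalently, $\gamma(\ba)=X_1^{a_1}\cdots X_n^{a_n}\cdot m$ in a module over $S=R[X_1,\dots,X_n]/(\text{characteristic relations})$. Replacing $R$ by the finite $\Z_{/\ell}$-algebra generated by the $X_i$ acting on a finite-dimensional piece, we may assume the relevant ring is finite over a Noetherian base. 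By the Chinese Remainder Theorem, $\ell$ may be taken to be $p^e$. Lemmas~\ref{lem:split_extension} and~\ref{lem:decompose_LRS} then decompose along the maximal ideals of the Artinian/finite algebra generated by the $X_i$, so we work over a local ring with maximal ideal $\frm$. In that local component each $X_i$ is either a unit or lies in $\frm$, hence is nilpotent.

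\textbf{Step 2: handle nilpotent coordinates.} If $X_i$ is nilpotent, say $X_i^{N}=0$, then for $a_i\ge N$ the term vanishes identically. We split $\N^n$ into the $2^{|\cdot|}$ regions where each nilpotent coordinate is either fixed to a value $<N$ or is $\ge N$. On each region the nilpotent variables are constant or irrelevant. The zero set is then a finite union of products of finite sets, copies of $\N_{\ge N}$, and the zero set of a map in which only unit operators are exponentiated. This requires the $p$-normal sets in $\N^n$ (Definition~\ref{def:pnormal_N}) to be closed under these products and finite unions. I expect this closure to follow routinely from the definition.

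\textbf{Step 3: the unit case.} When all $X_i$ are units, $\gamma(\ba)=\sum_j \br_j^{\ba}P_j(\ba)m_j$ after splitting the characteristic polynomials. By Proposition~\ref{prop:seperable_pol} we can pass to an extension in which the minimal polynomials split into separable parts times unipotent parts. The unipotent part $(1+\text{nilpotent})^{a}$ expands as a polynomial in binomials $\binom{a}{k}$. In characteristic $p^e$ these binomials are periodic modulo $p^e$ in suitable digit blocks, but they are not linear-exponential.

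This step is the main obstacle. My first attempt is to note that $U^{p^{s}}=1$ for unipotent $U$ with $s$ large, since $(1+\eta)^{p^s}\equiv 1$ once $s$ exceeds $e$ plus the nilpotency index. Splitting $\ba$ by residue modulo $p^s$ then replaces each unipotent factor with a constant. This yields a genuine linear-exponential map $\ba'\mapsto\gamma(p^s\ba'+\bb)$ in the units $\br_j^{p^s}$ over the local module, to which Theorem~\ref{thm:local} (or Theorem~\ref{thm:main}) applies, giving a $p$-normal set in $\Z^n$. Finally we intersect with $\N^n$ and apply the affine map $\ba'\mapsto p^s\ba'+\bb$. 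The remaining issue is the caveat noted earlier that $p$-normal in $\N^n$ is not simply $\N^n$ intersected with a $p$-normal set in $\Z^n$. I would therefore need a lemma converting $\{\ba\in\N^n: p^s\ba+\bb\in\N^n,\ \ba\in Z\}$ for $Z$ $p$-normal in $\Z^n$ into an effectively $p$-normal set in $\N^n$. I would try to prove it by restricting each generating $p$-succession piece to sign-consistent subfamilies.

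Assembling over the local components (intersection), the residue classes and nilpotent regions (union), and the primes $p\mid\ell$ (intersection) gives $\mZ(\gamma)=\bigcap_{p\mid\ell}S_p$.
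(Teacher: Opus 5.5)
\textbf{There is a genuine gap in Steps~1--2.}

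\emph{The reduction to one local component is not valid.} Your reformulation $\gamma(\ba)=X_1^{a_1}\cdots X_n^{a_n}\cdot m$ drops the read-out map. What you actually have is $\gamma(\ba)=\pi(A_1^{a_1}\cdots A_n^{a_n}v)$ with $\pi\colon N\to M$ only $R$-linear. Suppose you split $N$ along the maximal ideals of the finite $R$-algebra $R[A_1,\dots,A_n]$ (say after $R$ is local Artinian, so this algebra is a product of local rings). You get $N=\bigoplus_j N_j$ and $\gamma=\sum_j\pi(A^{\ba}v_j)$. That is a \emph{sum} of sequences, not $\mZ(\gamma)=\bigcap_j\mZ(\gamma_j)$, so you may not restrict attention to a single component.

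The decomposition that does give an intersection is Lemma~\ref{lem:decompose_LRS}, which localizes the scalar ring $R$. After it, the shift operators are not elements of the local ring, and a single coordinate can carry both unit and nilpotent roots. For instance, take $R=M=\F_p$ and $\gamma(a)=1^a-0^a$, so $\gamma(0)=0$ and $\gamma(a)=1$ for $a\geq1$. Its recurrence polynomial is $T(T-1)$, and nothing reduces it to a case where the coordinate is purely unit or purely nilpotent. So the per-coordinate dichotomy behind Steps~2 and~3 is unavailable. The same goes for describing $\mZ(\gamma)$ through products with $\N_{\geq N}$.

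\emph{How the paper assembles the same ingredients.} Your ingredients are the right ones, but all roots of the recurrence polynomial must be handled simultaneously in every coordinate.
\begin{enumerate}
\item After Lemma~\ref{lem:decompose_LRS}, $R$ is local with $\frm^t=0$ and $g$ splits over $R$.
\item Proposition~\ref{prop:seperable_pol} gives $g(T)\mid\prod_{i=1}^{\ell}(T^{p^u}-s_i)$ with the $s_i\in R$ pairwise distinct modulo $\frm$. This is the precise form of your observation that a $p$-power kills the unipotent part.
\item Each $\gamma_{\bj}(\ba)=\gamma(p^u\ba+\bj)$ then has recurrence polynomial $\prod_i(T-s_i)$.
\item Since $s_i-s_j\in R^{\times}$, the Vandermonde matrix $V$ and hence $V^{\otimes n}$ are invertible. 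This gives $\gamma_{\bj}(\ba)=\sum_{(i_1,\dots,i_n)}s_{i_1}^{a_1}\cdots s_{i_n}^{a_n}m_{i_1,\dots,i_n}$ with all bases in $R$.
\item At most one $s_i$, say $s_\ell$, lies in $\frm$, so $s_\ell^t=0$. The grid partition $a_i\in\{0,\dots,t-1\}$ versus $a_i\geq t$ deletes exactly the \emph{terms} containing $s_\ell^{a_i}$ on tail cells. What remains are linear-exponential maps on $\Z^n$, to which Theorem~\ref{thm:main} applies.
\end{enumerate}

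Your ansatz $\sum_j\br_j^{\ba}P_j(\ba)m_j$ needs justification of exactly this kind. Roots that are distinct but congruent modulo $\frm$, such as $1$ and $3$ in $\Z_{/4}$, give a singular Vandermonde matrix.

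\emph{The issue you flag at the end is not a real obstacle.} The set $\N^n\cap Z$ is $p$-normal in $\N^n$ via the trivial grid. Moreover, $p$-normality in $\N^n$ is preserved under $\ba\mapsto p^u\ba+\bb$.
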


By generalizing to dimensions $n > 1$, we lose the ability to decide whether $\bigcap_{p|\ell}S_p$ is empty.
This reduces to the existential first-order theory of the structure $\gen{\Z; 0, 1, <, +, \left(p^{\N}\right)_{p \mid \ell}}$, which (with the additional $<$) is also known to be decidable when $\ell$ has at most two prime divisors (see Section~\ref{subsec:pnormal_N}).

In Theorem~\ref{thm:LRS}, we define an $n$-dimensional linear recurrence sequence to be a map $\gamma \colon \N^n \rightarrow M$ that satisfies a recurrence in \emph{each} of the $n$ dimensions.
More precisely, there exists an integer $d \geq 1$ and coefficients $c_{ij} \in R, 1 \leq i \leq n, 1 \leq j \leq d$, such that the following recurrence holds for $i = 1, \ldots, n$:
\begin{multline*}
\gamma(a_1, \ldots, a_i, \ldots, a_n) = c_{i1} \cdot \gamma(a_1, \ldots, a_i-1, \ldots, a_n) + \cdots + c_{id} \cdot \gamma(a_1, \ldots, a_i-d, \ldots, a_n), \\
\text{for all $(a_1, \ldots, a_n) \in \N^n$ with $a_i\geq d$}.
\end{multline*}
Such $\gamma$ is uniquely determined by its initial values $\gamma(\ba), \ba \in \{0,1, \ldots, d-1\}^{n}$.

A canonical example of such a sequence is the linear-exponential sum $\gamma(\ba) = \sum_{i = 1}^k \br_i^{\ba} m_i$, where $\ba \in \N^n$ and the entries of $\br_i \in R^n$ are \emph{not necessarily invertible}.
Another interesting example is $\gamma(a_1, a_2) \coloneqq \alpha(a_1) - \beta(a_2)$, where $\alpha, \beta \colon \N \rightarrow M$ are one-dimensional linear recurrence sequences.
In this case, deciding whether $\mZ(\gamma)$ is nonempty is equivalent to determining whether the sets $\{\alpha(k) \mid k \in \N\}$ and $\{\beta(m) \mid m \in \N\}$ have a common element.


We mention that our definition of $n$-dimensional sequences can be equivalently formulated as rational power series of the special form $\frac{h(T_1, \ldots, T_n)}{\prod_{i=1}^n g_i(T_i)}$.
For a rational power series of the general form $\frac{h(T_1, \ldots, T_n)}{g(T_1, \ldots, T_n)}$, and more broadly, for an algebraic power series over a field of characteristic $p$, a result of Adamczewski and Bell~\cite{adamczewski2012vanishing} shows that its vanishing terms form a \emph{$p$-automatic} set.
This class is strictly more general than $p$-normal sets.

To prove Theorem~\ref{thm:LRS}, our main strategy is to reduce it to linear-exponential equations.
Our approach is quite different from the proof of Theorem~\ref{thm:LRS_ring} in~\cite{dong2025skolemproblemringspositive}, which features direct manipulations of power series.
Instead, we first reduce to a setting where the recurrence relations of $\gamma$ satisfy additional assumptions (Proposition~\ref{prop:seperable_pol}), and then derive a linear-exponential expression by inverting the tensor product of the Vandermonde matrices associated to the recurrence polynomials.

\subsection*{Organization of the paper}
In Section~\ref{sec:prelim}, we recall some notions from commutative algebra and define $p$-normal sets in $\Z^n$. 
We also review the preliminary steps of the proof in~\cite{dong2025s} that are common to our approach (Lemma~\ref{lem:inter_normal} and Proposition~\ref{prop:decompose_localize}).
In Section~\ref{sec:short_proof} we give the shorter proof of Theorem~\ref{thm:main} using Derksen-Masser (Theorem~\ref{thm:DM}) as a blackbox.
Our main technical contribution is Proposition~\ref{prop:internal_rep}.
In Section~\ref{sec:multi_dim} we define $p$-normal sets in $\N^n$ and give the proof of Theorem~\ref{thm:LRS}.
Most technical proofs are accompanied by concrete examples.
See Figure~\ref{fig:roadmap} for a dependency graph between the main statements in the paper.

\begin{figure}[h!]
    \centering
    \figureroadmap
    \caption{Dependency graph between the main statements. LE stands for linear-exponential equations, LRS stands for (multi-dimensional) linear recurrence sequences.}
    \label{fig:roadmap}
\end{figure}

\section{Preliminaries}\label{sec:prelim}
In this paper, all rings are commutative and unital (contain $1$).
In particular, any integer $\ell$ can be considered as the element $\ell \cdot 1$ of the ring.
We adopt the convention that $0 \in \N$.

\subsection{Modules and ideals}\label{subsec:prelim_module}

Let $R$ be a ring.
An \emph{$R$-module} is defined as an abelian group $(M, +)$ along with an operation $\cdot \;\colon R \times M \rightarrow M$, satisfying $r \cdot (m+m') = r \cdot m + r \cdot m'$, $(r + s) \cdot m = r \cdot m + s \cdot m$, $rs \cdot m = r\cdot (s \cdot m)$, and $1 \cdot m = m$.
The neutral element in $M$ is denoted as $0_M$ or simply $0$.
For example, for any $d \in \N$, $R^d$ is an $R$-module by $s \cdot (r_1, \ldots, r_d) = (sr_1, \ldots, sr_d)$.
An \emph{ideal} of $R$ is an $R$-submodule of $R$.

Given $m_1, \ldots, m_k$ in an $R$-module $M$, let 
\[
\textstyle
\gen{m_1, \ldots, m_k} \coloneqq \left\{\sum_{i=1}^k r_i \cdot m_i \;\middle|\; r_1, \ldots, r_k \in R\right\}
\]
denote the $R$-submodule generated by $m_1, \ldots, m_k$.
For an $R$-submodule $N \subseteq M$ and $m_1, m_2 \in M$, we write $m_1 \equiv m_2 \pmod N$ if $m_1 - m_2 \in N$.
Define the quotient $M/N \coloneqq \{\overline{m} \mid m \in M\}$ where $\overline{m_1} = \overline{m_2}$ if and only if $m_1 \equiv m_2 \pmod N$.
This quotient is also an $R$-module.
For a finitely generated ring $R$, every finitely generated $R$-module can be written as a quotient $R^d/\gen{v_1, \ldots, v_k}$ for some $d \in \N$ and $v_1, \ldots, v_k \in R^d$. 

Throughout this paper, we assume the standard computability conditions~\cite{baumslag1981computable} on the rings $R$ and the $R$-modules $M$.
Specifically, given elements $r_1, \ldots, r_n \in R$ and $m_1, \ldots, m_d \in M$, we assume one can compute:
\begin{enumerate}[nosep, label=(\roman*)]
    \item a finite generating set for the ideal $\{f \in \Z[X_1, \ldots, X_n] \mid f(r_1, \ldots, r_n) = 0_R\}$, and
    \item a finite generating set for the $\widetilde{R}$-module $\{(s_1, \ldots, s_d) \in \widetilde{R}^d \mid \sum_{i = 1}^d s_i m_i = 0_M\}$, where $\widetilde{R}$ is the subring generated by $r_1, \ldots, r_n$.
\end{enumerate}
These assumptions are satisfied in most settings of literature and applications, for example, when $R$ and $M$ are given by finite presentations $R = \Z[X_1, \ldots, X_n]/\gen{f_1, \ldots, f_k}, M = R^d/\gen{v_1, \ldots, v_k}$.


\medskip
Given an ideal $I \subseteq R$ and an $R$-module $M$, denote by $IM$ the submodule generated by the elements $im, i \in I, m \in M$.
Similarly, for $t \in \N$, denote by $I^t$ the ideal generated by the products $i_1 \cdots i_t$, where $i_1, \ldots, i_t \in I$.

We begin with the following basic lemma, which will be used several times throughout the paper.
\begin{lemma}[{cf.~\cite[Lemma~3.13]{dong2025s}}]\label{lem:lifting}
    Let $R$ be a ring and $p \geq 1$ be an integer.
    Let $I \subseteq R$ be an ideal such that $p \in I$, then for any $r, s \in R$ and $t \in \N$, we have
    \[
    r \equiv s \pmod{I} \implies r^{p^{t-1}} \equiv s^{p^{t-1}} \pmod{I^t}.
    \]
\end{lemma}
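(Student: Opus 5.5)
We argue by induction on $t \geq 1$. For $t = 1$ the statement reads $r \equiv s \pmod{I}$, which is exactly the hypothesis. (If $t = 0$ is also allowed, then $p^{t-1}$ is not an integer. We interpret the statement for $t \geq 1$, noting that $I^0 = R$ makes any congruence trivial.)

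For the inductive step, suppose $r^{p^{t-1}} \equiv s^{p^{t-1}} \pmod{I^t}$. Put $x = s^{p^{t-1}}$ and write $r^{p^{t-1}} = x + y$ with $y \in I^t$. Then
\[
r^{p^{t}} = (x+y)^p = x^p + \sum_{j=1}^{p-1} \binom{p}{j} x^{p-j} y^j + y^p .
\]
It suffices to show that every term after $x^p$ lies in $I^{t+1}$.

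The last term gives $y^p \in I^{tp} \subseteq I^{t+1}$ when $p \geq 2$, since $tp \geq t+1$. For the middle terms, the key is that $p$ divides $\binom{p}{j}$ for $1 \leq j \leq p-1$. This holds when $p$ is prime, which is the intended use. Hence $\binom{p}{j} \in pR \subseteq I$, and so $\binom{p}{j} x^{p-j} y^j \in I \cdot I^{tj} \subseteq I^{t+1}$. The degenerate case $p = 1$ is trivial, because then $1 \in I$ and $I = R$.

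The only real subtlety is the divisibility $p \mid \binom{p}{j}$, which fails for composite $p$. For example, with $p=4$ and $j=2$ we have $\binom{4}{2} = 6$. For general $p$ one needs another approach. One could expand $(x+y)^p = x^p + p x^{p-1} y + \sum_{j \geq 2} \binom{p}{j} x^{p-j} y^j$. The $j=1$ term lies in $I^{t+1}$ because $p \in I$, and the terms with $j \geq 2$ lie in $I^{2t} \subseteq I^{t+1}$ provided $t \geq 1$.

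This second argument works for every integer $p \geq 1$ and avoids primality entirely, so I would present it as the main argument. With it the induction closes and gives $r^{p^{t}} \equiv s^{p^{t}} \pmod{I^{t+1}}$.
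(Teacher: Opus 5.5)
Your second argument is correct and is essentially the paper's proof: the paper shows $x \equiv y \pmod{I^k} \Rightarrow x^p \equiv y^p \pmod{I^{k+1}}$ by putting the $i=1$ binomial term in $p I^k \subseteq I^{k+1}$ and the terms with $i \ge 2$ in $I^{ki} \subseteq I^{k+1}$, and then iterates this for $k = 1, \ldots, t-1$, which is your induction on $t$. The first, primality-based argument is unnecessary, so you can drop it and present only the general one.
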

\begin{proof}
    We claim that for every $k \geq 1$, $x \equiv y \pmod{I^k} \implies x^p \equiv y^p \pmod{I^{k+1}}$.
    Indeed, suppose $x \equiv y \pmod{I^k}$.
    Then $x = y + z$ for some $z \in I^k$, and hence $x^p = (y + z)^p = y^p + \sum_{i = 1}^p \binom{p}{i} y^{p-i} z^i$. 
    For $i = 2, \ldots, p$, we have $z^i \in I^{ki} \subseteq I^{k+1}$; while for $i = 1$, we have $\binom{p}{i} z^i = pz \in p \cdot I^k \subseteq I^{k+1}$.
    In both cases we have $\binom{p}{i} y^{p-i} z^i \in I^{k+1}$.
    Therefore $x^p - y^p = \sum_{i = 1}^p \binom{p}{i} y^{p-i} z^i \in I^{k+1}$.

    Applying this claim successively for $k = 1, 2, \ldots, t-1$, we obtain
    $
    r \equiv s \pmod{I} \implies r^p \equiv s^p \pmod{I^2} \implies r^{p^2} \equiv s^{p^2} \pmod{I^3} \implies \cdots \implies r^{p^{t-1}} \equiv s^{p^{t-1}} \pmod{I^{t}}
    $.
\end{proof}

For example, taking $I = \gen{2}$ in $\Z$ we obtain $r \equiv s \pmod{2} \implies r^{2^{t-1}} \equiv s^{2^{t-1}} \pmod{2^t}$.

\subsection{p-normal sets in $\Z^n$}

The following definition of $p$-normal sets is a reformulation of~\cite{derksen2012linear}.

\begin{definition}[$p$-normal sets in $\Z^n$]\label{def:pnormal}
    First, a set $S \subseteq \Z^n$ is called \emph{elementary $p$-nested} if it is of the form
    \begin{equation}\label{eq:pnested_Zn}
    \left\{\ba_0 + p^{\ell k_1} \ba_1 + \cdots + p^{\ell k_r} \ba_r \;\middle|\; k_1, k_2, \ldots, k_r \in \N \right\},
    \end{equation}
    where $\ell \geq 1$ and $\ba_0, \ba_1, \ldots, \ba_r \in \Q^n$.
    Note that the entries of $\ba_i$'s do not have to be integers: for example, the set $\left\{\frac{1}{2} + 3^k \cdot \frac{1}{2}\;\middle|\; k \in \N\right\} \subseteq \Z$ is elementary $3$-nested.
    A singleton is by definition elementary $p$-nested with $r = 0$.

    Next, a set $S \subseteq \Z^n$ is called \emph{$p$-succinct}, if it is of the form $H + D \coloneqq \{\bh + \bd \mid \bh \in H, \bd \in D\}$, where $H$ is a subgroup of $\Z^n$ and $D$ is elementary $p$-nested.
    For example, the set $\{(x+p^{k}, 2x) \mid x \in \Z, k \in \N\} \subseteq \Z^2$ is $p$-succinct as it is the sum of the subgroup $H = \{(x, 2x) \mid x \in \Z\} \leq \Z^2$ and the elementary $p$-nested set $D = \{(1, 0) \cdot p^{k} \mid k \in \N\}$.

    Finally, a set $S \subseteq \Z^n$ is called \emph{$p$-normal} if it is a finite union of $p$-succinct sets.
    The empty set is by definition $p$-normal.
    A set is called \emph{effectively} $p$-normal if all its defining coefficients can be effectively computed.
    \hfill $\blacksquare$
\end{definition}

It is easy to see that if $\varphi \colon \Z^n \rightarrow \Z^m$ is a $\Z$-linear map and $S \subseteq \Z^n$ is $p$-normal, then the image $\varphi(S) \subseteq \Z^m$ is also $p$-normal.
The same is true for preimages:



\begin{restatable}[\cite{derksen2015linear}]{lemma}{lempreimage}\label{lem:preimage_normal}
    Let $\varphi \colon \Z^m \rightarrow \Z^n$ be a $\Z$-linear map and $S \subseteq \Z^n$ be a $p$-normal set. Then $\varphi^{-1}(S)$ is effectively $p$-normal.
\end{restatable}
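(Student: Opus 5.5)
Since $\varphi^{-1}$ commutes with finite unions, it suffices to treat a single $p$-succinct set $S = H + D$. Here $H \le \Z^n$ is a subgroup and $D = \{\ba_0 + p^{\ell k_1}\ba_1 + \cdots + p^{\ell k_r}\ba_r \mid k_i \in \N\}$.

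\emph{Step 1: quotient by $H$.} Let $\pi \colon \Z^n \to \Z^n/H$. Then $\varphi^{-1}(H+D) = (\pi\circ\varphi)^{-1}(\pi(D))$. Replace $\varphi$ by $\psi = \pi \circ \varphi \colon \Z^m \to G = \Z^n/H$, a finitely generated abelian group. The set $\psi(\Z^m)$ is a subgroup of $G$, computable via Smith normal form. The preimage is $\ker\psi + \{\text{one chosen preimage of each } g \in \pi(D)\cap \psi(\Z^m)\}$. So it suffices to show two things:
\begin{enumerate}[nosep, label=(\alph*)]
    \item $T := \{\bk \in \N^r \mid \pi(\ba_0 + \sum_i p^{\ell k_i}\ba_i) \in \psi(\Z^m)\}$ is a finite union of nicely structured sets.
    \item Along each such piece, a preimage can be chosen as an elementary $p$-nested expression in $\Q^m$.
\end{enumerate}

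\emph{Step 2: reduction to a linear relation in a lattice.} Choose a splitting $G \cong \Z^s \oplus$ (finite). Then $\psi(\Z^m)$ is described by a finite congruence condition together with the condition that the free part lies in a sublattice $L \le \Z^s$. The torsion and congruence conditions depend only on $\bk$ modulo a suitable period, because $p^{\ell k}$ is eventually periodic modulo any integer. Refining $\ell$ to a multiple $\ell'$ and splitting off finitely many small values of each $k_i$ (these contribute sets with fewer free exponents), we may assume the congruence part is constant on each piece. Pass to $\Q$-coordinates: $\psi \otimes \Q$ has a rational right inverse $\sigma$ on its image. The rationally linear condition ``$\pi(\ba_0 + \sum p^{\ell k_i}\ba_i) \in \psi(\Z^m)\otimes\Q$'' becomes a linear equation $c_0 + \sum_i c_i p^{\ell k_i} = 0$ in the cokernel, a $\Q$-vector space. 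Its solution set in $\bk$ is exactly the zero set of a linear-exponential equation over $\Q$ in the variables $p^{\ell k_i}$. For equations of this special form, involving powers of a single integer $p$, I would analyze it by the standard grouping argument. Group the indices with equal $k_i$. A vanishing sum of distinct powers of $p$ with bounded coefficients forces the exponents to cluster within bounded gaps. Hence the solution set is a finite union of sets defined by equalities $k_i = k_j + c$ and by bounded values of some $k_i$.

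\emph{Step 3: assembling the preimage.} On each such piece, substituting $k_i = k_j + c$ rewrites $D$'s restriction as a new elementary $p$-nested set, whose image lies in $\psi(\Z^m)$ up to the fixed congruence class. Apply $\sigma$ termwise to get $\bb_0 + \sum p^{\ell' k'_j}\bb_j$ with $\bb_j \in \Q^m$. This is exactly why Definition~\ref{def:pnormal} allows rational vectors. We also need to correct this vector by a fixed element of $\Z^m$ so that it lands in $\Z^m$ and maps correctly on the torsion part; the periodicity arrangement makes this correction constant per piece. Adding $\ker\psi$ then gives a $p$-succinct set. Every step is computable: Smith normal form, periods of $p^{\ell k}$ modulo integers, and explicit gap bounds.

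The main obstacle is Step 2. One must show that the exponent tuples $\bk$ for which the image lands in the sublattice $\psi(\Z^m)$ form a set of the right shape, and that integrality (not only rational membership) is handled uniformly. I would first try to handle this by the combination of eventual periodicity of $p^{\ell k}$ modulo the index $[\,\cdot\,]$ and the gap-clustering argument for sums of powers of $p$.
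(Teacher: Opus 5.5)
Your proposal is sound in outline, but it takes a genuinely different route from the paper. The paper proves nothing new here. It cites Proposition~2.1 of~\cite{derksen2015linear} for preimages of $p$-succinct sets with integral nested coefficients, handles rational coefficients by multiplying $\varphi$, $H$ and $D$ by a common denominator, and takes finite unions.

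You instead reprove that proposition. You quotient by $H$, then split membership in $K\coloneqq H+\varphi(\Z^m)$ into two parts:
a torsion condition, handled by eventual periodicity of $p^{\ell k}$ with thresholds and $\ell$ refined to $\ell P$;
and a rational condition $c_0+\sum_i c_ip^{\ell k_i}=0$, handled by the gap/cluster argument.
You then lift along a section. Your argument is self-contained and explains why Definition~\ref{def:pnormal} needs rational coefficients; the paper's route buys brevity.

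In the cluster step, the gap inequalities are used only for necessity. Each shape (clusters with fixed offsets making every cluster subsum vanish, with the constant term's cluster at fixed exponents) is sufficient by itself. So no inequalities survive, and each piece is again elementary $p$-nested.

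Step~3 contains a slip. Adding a fixed element of $\Z^m$ cannot make a non-integral vector integral. The correction must be a fixed \emph{rational} vector: an integral preimage minus the $\sigma$-lift, taken at a base point of the piece. It is valid on the whole piece because your thresholds and period refinement make the differences $\sum_j(p^{Lk_j}-1)\bb_j$ integral with the right torsion image.

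A cleaner fix: $K$ is free, so the map $\Z^m\oplus H\to K$, $(\boldsymbol{e},\bh)\mapsto\varphi(\boldsymbol{e})+\bh$, has an integral section $\tau$. For a piece $\{\bd_0+\sum_jp^{Lk_j}\bd_j\}\subseteq K$ one gets $(p^L-1)\bd_j\in K$, so $\tau\otimes\Q$ applies termwise. Projecting to $\Z^m$ then gives an elementary $p$-nested set of integral lifts, with no correction or torsion bookkeeping.

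Also, since the $\ba_i$ may be rational, scale by a common denominator $N$ (using $\bx\in K\iff N\bx\in NK$) before reducing modulo torsion.
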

\begin{proof}
    Proposition~2.1 of~\cite{derksen2015linear} states that the preimage $\varphi^{-1}(T)$ of a $p$-succinct set $T \subseteq \Z^n$ is effectively $p$-normal.\footnote{Write $T=B+U$, where $B$ is a subgroup of $\Z^n$ and $U\subseteq\Z^n$ is an elementary $p$-nested set. Proposition~2.1 of~\cite{derksen2015linear} shows that $\varphi^{-1}(T)$ is $p$-normal when $U$ is defined by integer coefficients $\ba_0,\ldots,\ba_r$, whereas our definition allows these coefficients to be rational. This causes no loss of generality: multiplying $\varphi$, $B$, and $U$ by a common denominator of $\ba_0,\ldots,\ba_r$ yields an equivalent instance in which the coefficients are integral.}
    Since every $p$-normal set is a finite union of $p$-succinct sets, it follows that the preimage $\varphi^{-1}(S)$ of a $p$-normal set $S$ is effectively $p$-normal.
\end{proof}

We can use this fact to show the following:

\begin{lemma}\label{lem:inter_normal}
    Given two $p$-normal sets $S, T \subseteq \Z^n$, the intersection $S \cap T$ is effectively $p$-normal.
\end{lemma}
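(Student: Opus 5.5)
The idea is to express the intersection as a preimage under a linear map and then invoke Lemma~\ref{lem:preimage_normal}.

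Consider the diagonal map
\[
\Delta \colon \Z^n \rightarrow \Z^{2n}, \qquad \ba \mapsto (\ba, \ba),
\]
which is $\Z$-linear. By construction,
\[
S \cap T = \Delta^{-1}(S \times T).
\]
So it is enough to show that the product $S \times T \subseteq \Z^{2n}$ is effectively $p$-normal. Lemma~\ref{lem:preimage_normal} then gives that $\Delta^{-1}(S \times T)$ is effectively $p$-normal.

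To see that the product is $p$-normal, write $S = \bigcup_i (H_i + D_i)$ and $T = \bigcup_j (H'_j + D'_j)$ as finite unions of $p$-succinct sets. Then $S \times T = \bigcup_{i,j} (H_i + D_i) \times (H'_j + D'_j)$, and each term equals $(H_i \times H'_j) + (D_i \times D'_j)$. Here $H_i \times H'_j$ is a subgroup of $\Z^{2n}$, so the remaining task is to check that a product of two elementary $p$-nested sets is again elementary $p$-nested.

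The only real obstacle is that the two nested sets may use different exponents $\ell$ and $\ell'$. Suppose
\[
D = \Bigl\{\ba_0 + \textstyle\sum_s p^{\ell k_s}\ba_s\Bigr\}, \qquad D' = \Bigl\{\bb_0 + \textstyle\sum_t p^{\ell' k'_t}\bb_t\Bigr\}.
\]
Set $L = \ell\ell'$. Each $k \in \N$ can be written as $k = (\ell'/1)\cdot q + j$ with $0 \le j < \ell'$, so $p^{\ell k} = p^{\ell j}\, p^{L q}$. Hence $D$ is the finite union over residues $j$ of the sets obtained by replacing $\ba_s$ with $p^{\ell j_s}\ba_s$ and using exponent $L$. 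The same applies to $D'$. After this refinement, the product of one piece from each side is
\[
\Bigl\{(\ba_0,\bb_0) + \textstyle\sum_s p^{L q_s}(\ba'_s, 0) + \sum_t p^{L q'_t}(0, \bb'_t)\Bigr\},
\]
which is elementary $p$-nested with exponent $L$.

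Consequently, $D \times D'$ is a finite union of elementary $p$-nested sets, and $S \times T$ is $p$-normal. Every step is explicit, so the defining data is effectively computable, and applying Lemma~\ref{lem:preimage_normal} to $\Delta$ completes the proof.
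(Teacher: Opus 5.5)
Your proof is correct and follows the paper's argument: you realize $S\cap T$ as the preimage of $S\times T$ under the diagonal map, show the product is $p$-normal, and apply Lemma~\ref{lem:preimage_normal}. Your refinement to the common exponent $L=\ell\ell'$ also makes explicit a point the paper glosses over when it says that a sum of elementary $p$-nested sets is elementary $p$-nested. In general one only gets a finite union of such sets, but that is enough for $p$-normality.
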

\begin{proof} 
    We give here a short proof using Lemma~\ref{lem:preimage_normal}.
    A longer, self-contained proof was also given in~\cite[Proposition~3.7]{dong2025s}.
    
    Define $S \times T = \{(\bs, \bt) \mid \bs \in S, \bt \in T\} \subseteq \Z^{2n}$.
    By writing $S \times T$ as the Minkowski sum of the two $p$-normal sets $S \times \{0^n\}$ and $\{0^n\} \times T$, we can see that $S \times T$ is $p$-normal.
    Indeed, by definition, the sum of two elementary $p$-nested sets is elementary $p$-nested, and the sum of two subgroups is a subgroup.
    It follows that the sum of two $p$-normal sets is $p$-normal.
    Define the diagonal map $\varphi \colon \Z^n \rightarrow \Z^{2n}, \ba \mapsto (\ba, \ba)$, then the intersection $S \cap T$ is equal to the preimage $\varphi^{-1}(S \times T)$, so it is $p$-normal by Lemma~\ref{lem:preimage_normal}.
\end{proof}

\subsection{Reduction to local rings}
For a standard reference on commutative algebra, see~\cite{atiyah1969introduction}.
For any $n \geq 1$, we denote by $\Z_{/n}$ the ring of integers modulo $n$.

Let $R$ be a ring.
An ideal $\frp \subset R$ is called \emph{prime} if $\frp \neq R$ and for every $a, b \in R$, we have $ab \in \frp \implies a \in \frp \,\text{ or }\, b \in \frp$.
An ideal $\frm \subset R$ is called \emph{maximal} if $\frm \neq R$ and there is no ideal $J$ such that $\frm \subsetneq J \subsetneq R$.
Every maximal ideal is prime.

A ring $R$ is called \emph{local} if it has exactly one maximal ideal.
For example, for any $n$, the ring $\Z_{/2^n}$ is local, since its ideals are of the form $\gen{2^k}, 0 \leq k \leq n$, with $\gen{2}$ the unique maximal ideal.
However the ring $\Z_{/6}$ is not local since both $\gen{2}, \gen{3}$ are maximal.
As a special case, fields are local rings with maximal ideal $\{0\}$.

Let $R$ be a local ring with maximal ideal $\frm$.
Then an element $x \in R$ is invertible if and only if $x \notin \frm$.
In other words, $R^{\times} = R \setminus \frm$.
The quotient ring $R/\frm$ is a field, called the \emph{residue field} of $R$.

\begin{example}\label{example:Z4}
    We give an example of a local ring that will appear in later examples.
    Let $\Z_{/4}[X]$ be the set of univariate polynomials with coefficients in $\Z_{/4}$.
    Consider the ring whose elements are fractions $\frac{f}{g}$, where $f, g \in \Z_{/4}[X]$ and $2 \nmid g$, with equality $\frac{f_1}{g_1} = \frac{f_2}{g_2}$ iff $f_1 g_2 = f_2 g_1$.
    The ring operations are the natural addition and multiplication on fractions: $\frac{f_1}{g_1} + \frac{f_2}{g_2} = \frac{f_1 g_2 + f_2 g_1}{g_1g_2}$, $\frac{f_1}{g_1} \cdot \frac{f_2}{g_2} = \frac{f_1 f_2}{g_1g_2}$.
    We denote this ring by $\Z_{/4}(X)$, which is a non-standard notation:
    \[
    \Z_{/4}(X) \coloneqq \left\{\frac{f}{g} \;\middle|\; f, g \in \Z_{/4}[X],\; 2 \nmid g \right\}.
    \]
    It is important that we do not allow denominators that are divisible by $2$.
    This is because $\frac{1}{2}$ is not well-defined, otherwise $\frac{1}{2} \cdot \frac{1}{2} = \frac{1}{4} = \frac{1}{0}$.
    
    Then, $\Z_{/4}(X)$ is a local ring, its maximal ideal is $\gen{2} = \left\{\frac{2f}{g} \;\middle|\; f, g \in \Z_{/4}[X],\; 2 \nmid g \right\}$, and its residue field is $\Z_{/4}(X) / \gen{2} = \F_2(X)$.
    In particular, $\frac{f}{g} \in \Z_{/4}(X)$ is invertible if and only if $2 \nmid f$.
    There is a natural injective map $\Z_{/4}[X] \hookrightarrow \Z_{/4}(X),\; f \mapsto \frac{f}{1}$.
    Also, $\gen{2}^t=0$ for all $t \geq 2$.
    \hfill $\blacksquare$
\end{example}

Using a decomposition-localization argument, one can express the zero set $\mZ(f)$ of a linear-exponential map $f \colon \Z^n\rightarrow M$ as a finite intersection $\bigcap_{i = 1}^d \mZ(f_i)$, where each $f_i \colon \Z^n\rightarrow M_i$ is a linear-exponential map to a module $M_i$ over a local ring $R_i$:

\begin{proposition}[{\cite[Section~9]{derksen2007skolem},~\cite[Section~3.2]{dong2025s}}]\label{prop:decompose_localize}
    Let $f \colon \Z^n\rightarrow M$ be a linear-exponential map over some $R$-module $M$.
    Then one can effectively compute linear-exponential maps $f_i:\Z^n\rightarrow M_i, i = 1, \ldots, d$, such that $\mZ(f) = \bigcap_{i = 1}^d \mZ(f_i)$, where each $M_i$ is a finitely generated module over a local ring $R_i$ with maximal ideal $\frm_i$, such that $\frm_i^t = 0$ for some $t \in \N$.
    Moreover, if $\ell M = 0$ for some $\ell \in \N_{>0}$, then for each $i$, the residue field $R_i/\frm_i$ has characteristic $p_i$, where $p_i$ is a prime divisor of $\ell$.
\end{proposition}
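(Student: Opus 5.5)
First I reduce to a finitely generated situation. Only the finitely many elements $r_{ij}$, their inverses, and $m_1, \ldots, m_k$ appear in $f$. Let $R_0$ be the subring of $R$ generated by the $r_{ij}^{\pm 1}$; it is a finitely generated ring, hence Noetherian. Let $M_0 = \gen{m_1, \ldots, m_k}$ be the $R_0$-submodule of $M$ they generate. Then $f$ takes values in $M_0$ and $\mZ(f)$ is unchanged. By the computability assumptions (i) and (ii), we can compute presentations $R_0 = \Z[X_1,\ldots]/J$ and $M_0 = R_0^k/N$.

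Next I pass to local rings. $M_0$ is a finitely generated module over a Noetherian ring, so $N$ has an effectively computable primary decomposition $N = N_1 \cap \cdots \cap N_d$ (Gröbner/primary decomposition algorithms over $\Z$). Hence $M_0$ embeds into $\bigoplus_i R_0^k/N_i$, and $\mZ(f) = \bigcap_i \mZ(\bar f_i)$, where $\bar f_i$ is the composite of $f$ with the projection to $R_0^k/N_i$.

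Fix $i$ and let $\frp_i$ be the associated prime of $N_i$. The multiplicative set $R_0 \setminus \frp_i$ consists of non-zerodivisors on $R_0^k/N_i$, since $N_i$ is $\frp_i$-primary. So the localization map $R_0^k/N_i \to (R_0^k/N_i)_{\frp_i}$ is injective, and we may replace $\bar f_i$ by its image there. The ring $(R_0)_{\frp_i}$ is local with maximal ideal $\frp_i(R_0)_{\frp_i}$, but its powers need not vanish. To fix this, use $\frp_i^{t}(R_0^k) \subseteq N_i$ for some effectively computable $t$. I then take $R_i \coloneqq (R_0/\frp_i^t)_{\frp_i}$. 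This is local with maximal ideal $\frm_i$ satisfying $\frm_i^t = 0$. The module $M_i \coloneqq (R_0^k/N_i)_{\frp_i}$ is a finitely generated $R_i$-module. The $r_{ij}$ map to units of $R_i$, so $f_i$ is again linear-exponential.

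For the characteristic claim: if $\ell M = 0$, then $\ell$ annihilates $M_i$. The module $M_i \neq 0$ is finitely generated over $R_i$, and its annihilator lies in $\frm_i$ by Nakayama. Hence $\ell \in \frm_i$, so $\operatorname{char}(R_i/\frm_i)$ divides $\ell$. This characteristic is nonzero, so it is a prime $p_i \mid \ell$. Components with $M_i = 0$ impose no constraint and are dropped.

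The main obstacle is effectiveness. We need a computable primary decomposition over finitely generated $\Z$-algebras, exponents $t$ with $\frp_i^t$ killing $R_0^k/N_i$, and an effective description of the localized ring and module satisfying the computability conditions. I would handle this by citing standard algorithms (e.g. Gianni–Trager–Zacharias over $\Z$). In the $\ell M=0$ case, I would also note that $R_i$ is a localization of a quotient of $\Z_{/\ell}[X_1,\ldots]$, where these algorithms apply directly.
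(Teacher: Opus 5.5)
Your proposal is correct and follows essentially the same route as the paper. Both restrict to the finitely generated subring and submodule, take a primary decomposition, localize each $\frp_i$-coprimary component at $\frp_i$ (injective since elements outside $\frp_i$ are non-zerodivisors on it), and replace the localized ring by its quotient by $\frm_i^t$. Your characteristic argument via $\ell \in \frm_i$ is equivalent to the paper's use of $\ell \in \frp_i$ together with primality of $\frp_i$, and it needs no Nakayama: if $\ell \notin \frm_i$, then $\ell$ would be a unit of the local ring $R_i$, forcing $M_i = 0$.
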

\noindent Before giving the proof, we recall the following facts from commutative algebra~\cite{eisenbud2013commutative}.
    \begin{enumerate}[nosep, label=(\roman*)]
        \item Let $R$ be a ring. For a prime ideal $\frp\subset R$, the localization of $R$ at $\frp$ is $R_{\frp}=(R\setminus\frp)^{-1}R$.
        Its elements are fractions $\frac{r}{s}$ with $r\in R$ and $s\in R\setminus\frp$, where $\frac{r_1}{s_1} = \frac{r_2}{s_2}$ if and only if $s(s_1 r_2 - s_2 r_1) = 0$ for some $s \in R \setminus \frp$.
        The ring $R_{\frp}$ is local with maximal ideal $\frp R_{\frp}$.
        Similarly, for an $R$-module $M$, we define $M_{\frp} = (R \setminus \frp)^{-1} M$.
        It is an $R_{\frp}$-module with elements $\frac{m}{s}, m \in M, s \in R \setminus \frp$, where $\frac{m_1}{s_1} = \frac{m_2}{s_2}$ if and only if $s(s_1 m_2 - s_2 m_1) = 0$ for some $s \in R \setminus \frp$.
        \item An $R$-module $M$ is called \emph{coprimary} if $rm = 0 \implies (m = 0) \,\text{ or }\, (r^nM=0$ for some $n \geq 1)$.
        In this case, $\frp \coloneqq \{r \in R \mid \exists n \geq 1, r^nM=0\}$ is a prime ideal of $R$, and $M$ is called $\frp$-coprimary.
        When $M$ is $\frp$-coprimary, the localization map $\phi \colon M\rightarrow M_\frp, m \mapsto \frac{m}{1}$ is injective.
        Indeed, if $sm=0$ for some $s\in R \setminus \frp$, then coprimaryness implies either $m=0$ or $s^nM=0$ for some $n\geq1$. The latter would imply $s\in\frp$, a contradiction.
        \item A \emph{primary decomposition} of a submodule $N \subseteq M$ is an expression $N = \bigcap_{i = 1}^d N_i$, where each $N_i$ is a submodule of $M$ such that $M/N_i$ is coprimary.
        The Lasker-Noether theorem~\cite[Theorem~3.10]{eisenbud2013commutative} states that such a decomposition always exists when $R$ is Noetherian and $M$ is finitely generated.
        In this case, a primary decomposition can be effectively computed~\cite{rutman1992grobner}.
    \end{enumerate}
\begin{proof}[Proof of Proposition~\ref{prop:decompose_localize}]
    An elementary illustration of the proof is given in Example~\ref{example:decompose} below.
    The main idea is as follows.
    We use primary decomposition to obtain an injection
    $
    M \hooklongrightarrow M_1 \times \cdots \times M_d,
    $
    where each $M_i$ is a $\frp_i$-coprimary $R$-module for some prime ideal $\frp_i$.
    If we take $f_i$ to be the composition of $f$ with the projection $M \rightarrow M_i$, then $\mZ(f) = \bigcap_{i = 1}^d \mZ(f_i)$.
    It remains to replace $R_i$ and $M_i$ by suitable localizations and quotients so that they satisfy the conditions required in the proposition.
    
    We now give the proof in detail. 
    Write $f(\ba) = \sum_{i=1}^k \br_i^{\ba} m_i$.
    By restricting $R$ to the subring generated by the entries of the $\br_i$'s and their inverses, we can suppose the ring $R$ to be finitely generated over $\Z$ and hence Noetherian.
    Then by restricting $M$ to the submodule generated by the $m_i$'s, we can suppose $M$ to be finitely generated as an $R$-module.
    
    Let $\{0\} = \bigcap_{i = 1}^d N_i$ be the primary decomposition of the zero submodule of $M$.
    Then, for each $i$, $M/N_i$ is $\frp_i$-coprimary for some prime ideal $\frp_i \subset R$.
    Let $\pi_i \colon M\rightarrow M/N_i$ be the canonical projection, then $\bigcap_{i = 1}^d \ker(\pi_i) = \{0\}$, so we have $\mZ(f)=\bigcap_{i = 1}^d \mZ(\pi_i\circ f)$.
    
    We now consider each individual $i$.
    Replacing $M$ by $M/N_i$ and $f$ by $\pi_i\circ f$, we may assume $M$ is $\frp$-coprimary. 
    Since $M$ is $\frp$-coprimary, there exists $t \in \N$ such that $\frp^tM=0$ (see~\cite[Proposition~3.9]{eisenbud2013commutative}). 
    Let $R_{\frp},M_\frp$ denote the localizations of $R, M$ at $\frp$.
    Then $R_{\frp}$ is a local ring with maximal ideal $\frm \coloneqq \frp R_{\frp}$, and $M_{\frp}$ is a finitely generated $R_{\frp}$-module such that $\frm^t M_{\frp} = 0$.
    Since $\frm^t M_{\frp} = 0$, $M_{\frp}$ is naturally a finitely generated module over the quotient ring $\overline{R_{\frp}} \coloneqq R_{\frp}/\frm^t$.
    This is a local ring with maximal ideal $\overline{\frm} \coloneqq \frm \overline{R_{\frp}}$, such that $\overline{\frm}^t = 0$.
    
    Let $\varphi \colon R \rightarrow \overline{R_{\frp}}$ be the composition of the localization map with the quotient map, and let $\phi \colon M \rightarrow M_{\frp}$ be the localization map.
    Since $M$ is $\frp$-coprimary, $\phi$ is injective (see (ii) above).
    We define $f_{\frp}=\phi\circ f$. 
    Then $f_{\frp}(\ba) = \sum_{i=1}^k \varphi(\br_i)^{\ba} \phi(m_i)$, so $f_{\frp}$ is a linear-exponential map over the $\overline{R_{\frp}}$-module $M_{\frp}$. 
    Since $\phi$ is injective, we have $f(\ba) = 0 \iff f_{\frp}(\ba) = 0$.
    This proves the first statement of the proposition by taking $f_i \coloneqq f_{\frp}$, $M_i \coloneqq M_{\frp}$, $R_i \coloneqq \overline{R_{\frp}}$, and $\frm_i \coloneqq \overline{\frm}$.

    For the second statement, since $\ell M = 0$, we have $\ell (M/N_i) = 0$.
    Hence $\ell \in \frp_i$ for $i = 1, \ldots, d$.
    Since $\frp_i$ is prime and $\ell$ is a product of prime numbers, there exists a prime divisor $p_i\mid\ell$ such that $p_i \in \frp_i$.
    Therefore $p_i \in \frm_i$ and the residue field $R_i/\frm_i$ has characteristic $p_i$.
\end{proof}

\begin{example}\label{example:decompose}
    Consider the $\Z[X, Y]$-module\footnote{For expository convenience, we are using a ring $\Z[X, Y]$ without non-trivial invertible elements.
    To obtain a proper example, one can for example replace $\Z[X, Y]$ with the Laurent polynomial ring $\Z[X, X^{-1}, Y, Y^{-1}]$.} $M = \Z[X, Y]/\gen{12, Y^2-3XY+6X^2-4X}$, then $12M = 0$.
    For a map $f \colon \Z^n \rightarrow M$, we want to write $\mZ(f) = \bigcap_{i = 1}^d \mZ(f_i)$ where each $f_i$ maps to a module $M_i$ over a local ring.
    
    Since $12 = 3 \times 4$, we have $\{0\} = 3M \cap 4M$.
    So by the Chinese Remainder Theorem we can decompose $M$ as a direct product $(M/3M) \times (M/4M)$:
    \[
    M = \Big(\underbrace{\Z[X, Y]/\gen{3, Y^2-3XY+6X^2-4X}}_{M/3M}\Big) \times \Big(\underbrace{\Z[X, Y]/\gen{4, Y^2-3XY+6X^2-4X}}_{M/4M}\Big).
    \]
    The first component can be simplified as 
    \[
    \Z[X, Y]/\gen{3, Y^2-3XY+6X^2-4X} = \Z[X, Y]/\gen{3, Y^2-X} = \F_3[X, Y]/\gen{Y^2-X}.
    \]
    The second component can be further decomposed:
    \begin{multline*}
        \Z[X, Y]/\gen{4, Y^2-3XY+6X^2-4X} = \Z[X, Y]/\gen{4, Y^2-3XY + 2X^2} \\
        = \Z_{/4}[X, Y]/\gen{(Y-X)(Y-2X)}
        = \Big(\underbrace{\Z_{/4}[X, Y]/\gen{Y-X}}_{M/(4M+(Y-X)M)}\Big) \times \Big(\underbrace{\Z_{/4}[X, Y]/\gen{Y-2X}}_{M/(4M+(Y-2X)M)}\Big).
    \end{multline*}
    Thus we obtain the decomposition of $M$ into coprimary modules:
    \begin{equation}\label{eq:decompose_344}
    M = \Big(\underbrace{\F_3[X, Y]/\gen{Y^2-X}}_{M/3M}\Big) \times \Big(\underbrace{\Z_{/4}[X, Y]/\gen{Y-X}}_{M/(4M+(Y-X)M)}\Big) \times \Big(\underbrace{\Z_{/4}[X, Y]/\gen{Y-2X}}_{M/(4M+(Y-2X)M)}\Big).
    \end{equation}
    In the first component of~\eqref{eq:decompose_344}, via the variable substitution $X = Y^2$, we have an injection
    \begin{equation}\label{eq:inject_1}
    \F_3[X, Y]/\gen{Y^2-X} \xrightarrow[X \mapsto Y^2]{\sim} \F_3[Y] \hooklongrightarrow \F_3(Y).
    \end{equation}
    In the second component of~\eqref{eq:decompose_344}, via the variable substitution $Y = X$, we have an injection
    \begin{equation}\label{eq:inject_2}
    \Z_{/4}[X, Y]/\gen{Y-X} \xrightarrow[Y \mapsto X]{\sim} \Z_{/4}[X] \hooklongrightarrow \Z_{/4}(X).
    \end{equation}
    (Recall the definition of the local ring $\Z_{/4}(X)$ from Example~\ref{example:Z4}.)
    In the third component of~\eqref{eq:decompose_344}, via the variable substitution $Y = 2X$, we have an injection 
    \begin{equation}\label{eq:inject_3}
    \Z_{/4}[X, Y]/\gen{Y-2X} \xrightarrow[Y \mapsto 2X]{\sim} \Z_{/4}[X] \hooklongrightarrow \Z_{/4}(X).
    \end{equation}
    To summarize, we have the following ``decomposition and localization'' diagram:
    \[
    \begin{tikzcd}[column sep=0mm]
    M \;\; = & \Big(M/3M\Big) \arrow[d, hook] & \times \;\; \Big(M/(4M+(Y-X)M)\Big) \arrow[d, hook] & \times \;\; \Big(M/(4M+(Y-2X)M)\Big), \arrow[d, hook] \\
    & \underbrace{\F_3(Y)}_{M_1} & \underbrace{\Z_{/4}(X)}_{M_2} & \underbrace{\Z_{/4}(X)}_{M_3}
    \end{tikzcd}
    \]
    where $\F_3(Y)$, $\Z_{/4}(X)$, $\Z_{/4}(X)$ are local rings considered as modules over themselves.
    Using the injection $M \hookrightarrow M_1 \times M_2 \times M_3$, the linear-exponential map $f\colon \Z^n \rightarrow M$ is projected on three linear-exponential maps $f_1 \colon \Z^n \rightarrow M_1,\; f_2 \colon \Z^n \rightarrow M_2,\; f_3 \colon \Z^n \rightarrow M_3$, and $f(a)=0$ if and only if $f_1(a) = f_2(a) = f_3(a) = 0$.
    Therefore $\mZ(f) = \mZ(f_1) \cap \mZ(f_2) \cap \mZ(f_3)$.
    \hfill $\blacksquare$
\end{example}

\section{A proof via the Derksen-Masser theorem}\label{sec:short_proof}
In this section we give a shorter, comparably elementary proof of Theorem~\ref{thm:main}.
\thmmain*

By Proposition~\ref{prop:decompose_localize}, we can write $\mZ(f)$ as a finite intersection $\bigcap_{i = 1}^d \mZ(f_i)$, where each $f_i$ is a linear-exponential map to a module $M_i$ over a local ring $R_i$ with residue field of characteristic $p_i \mid \ell$.
We obtain Theorem~\ref{thm:main} if we show that each $\mZ(f_i)$ is $p_i$-normal.
Therefore Theorem~\ref{thm:main} reduces to the following special case where $R$ is local:

\begin{theorem}[Linear-exponential equation in modules over local rings]\label{thm:local}
    Let $R$ be a local ring with maximal ideal $\frm$ and residue field of characteristic $p$.
    Let $M$ be a finitely generated $R$-module with $\frm^tM=0$ for some $t \in \N$, and let $f(\ba)=\sum_{i = 1}^k\br_i^{\ba} m_i$ where $\br_1,\ldots,\br_k\in (R^{\times})^n, m_1,\ldots,m_k\in M$.
    Then the zero set $\mZ(f)$ is effectively $p$-normal.
\end{theorem}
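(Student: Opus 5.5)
We prove Theorem~\ref{thm:local} by induction on $t$, peeling off one layer $\frm^{j}M/\frm^{j+1}M$ at a time. Each layer is a vector space over the residue field $K \coloneqq R/\frm$, which has characteristic $p$, so Theorem~\ref{thm:DM} applies to it. The goal is to express $\mZ(f)$ as a positive Boolean combination of sets $p^{s}\cdot\mZ(g)+\bb$, where each $g$ is a linear-exponential map into a finite-dimensional $K$-vector space. Once this is done, we conclude with Theorem~\ref{thm:DM} applied coordinatewise (a finite intersection), together with Lemma~\ref{lem:preimage_normal} for the affine maps $\ba\mapsto p^{s}\ba+\bb$ and Lemma~\ref{lem:inter_normal} for intersections. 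Unions are $p$-normal by definition.

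\emph{Base case $t=1$.} Here $\frm M=0$, so $M$ is a finite-dimensional $K$-vector space. Reducing the $\br_i$ modulo $\frm$ gives elements of $K^{\times}$, and $f$ becomes a tuple of linear-exponential maps over $K$. Its zero set is an intersection of Derksen--Masser zero sets, hence effectively $p$-normal.

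\emph{Inductive step.} Write $f=\sum_i \br_i^{\ba}m_i$, and let $\overline f$ be its image in $M/\frm^{t-1}M$. By induction, $\mZ(\overline f)$ is effectively $p$-normal, and $\mZ(f)\subseteq\mZ(\overline f)$. It remains to describe, inside $\mZ(\overline f)$, when the residual value $f(\ba)\in\frm^{t-1}M$ vanishes. This is the Hensel-type step. First group the indices $i$ by the class of $\br_i$ modulo $\frm$. Since $p\in\frm$, Lemma~\ref{lem:lifting} gives $r^{p^{t-1}}\equiv s^{p^{t-1}}\pmod{\frm^{t}}$ whenever $r\equiv s\pmod\frm$. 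Hence after the substitution $\ba=p^{t-1}\ba'+\bb$, with $\bb$ ranging over the finitely many residues in $\{0,\dots,p^{t-1}-1\}^n$, all $\br_i^{p^{t-1}}$ in one class become equal in $R$ (as $\frm^tM=0$). The map $f$ therefore collapses to a map of the form $\sum_j \bs_j^{\ba'}m'_j$ in which the bases $\bs_j$ are pairwise distinct modulo $\frm$. Under such an affine substitution $p$-normality pulls back and pushes forward, by Lemma~\ref{lem:preimage_normal} and images under linear maps, so we may assume the bases are distinct modulo $\frm$.

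In this situation, the plan is to find a filtration argument. Consider the submodule $N=\gen{m_1,\dots,m_k}$. The condition $f(\ba)=0$ should be encoded layer by layer: the top layer $N/(N\cap\frm M)$ gives a Derksen--Masser equation over $K$. On its zero set, we write $f(\ba)=\sum_i \br_i^{\ba}m_i$ with coefficients rewritten so that the next layer again yields a linear-exponential map over $K$, now with the same bases reduced modulo $\frm$, after linear substitutions of the $m_i$ valid on each $p$-succinct piece of the previous zero set. Concretely, on a $p$-succinct piece $H+D$, one parametrizes $\ba$ linearly and uses Lemma~\ref{lem:lifting} again to turn the correction terms into linear-exponential expressions in the new parameters.

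The main obstacle is exactly this step: after restricting to a zero set of the top layer, the correction term in $\frm^{j}M/\frm^{j+1}M$ must again be a linear-exponential map in suitable new variables. The difficulty is that the zero set is not a lattice but contains the $p$-power parameters $p^{\ell k}$. My first attempt would be to work directly with the affine sets $p^{s}\Z^n+\bb$, which suffice because of the collapsing trick above. Iterating this $t$ times, each refinement costs a factor $p^{t-1}$ in the modulus. This avoids ever substituting exponential parameters and yields the claimed positive Boolean combination.
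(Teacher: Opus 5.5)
Your base case and the reduction to bases pairwise distinct modulo $\frm$ are correct. The inductive step, however, has a genuine gap, exactly where you flag the ``main obstacle''.

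Once you know $f(\ba)$ lies in a deeper layer, you need a linear-exponential map with values in that layer that agrees with $f$ wherever $f(\ba)$ lies in it. Nothing in the proposal produces one. Your collapsing trick only uses congruences between the bases themselves, $\br_i\equiv\br_j \pmod{\frm}$, which hold for every $\ba$. It cannot see the congruences among the values $\br_i^{\ba}$ that hold only on the zero set of the top layer, and those are what control the next layer.

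Your fallback of refining along cosets $p^s\Z^n+\bb$ also fails. That zero set is in general not a finite union of such cosets. Parametrizing a $p$-succinct piece $H+D$ instead produces factors $\br^{p^{\ell k}\ba_1}$, which are doubly exponential in $k$ and so not linear-exponential in the new parameters. A concrete case is $f(a)=(X+1)^a-X^a-1^a(2X+1)$ over $\Z_{/4}(X)$ (Example~\ref{example:continued}):
\begin{itemize}
\item the bases are distinct mod $2$ and stay so after any substitution $a=2^s a'+b$, so nothing ever collapses;
\item $f(2b)\equiv 0 \pmod 2$ exactly for $b\in\{2^k\mid k\in\N\}$, which is not a finite union of arithmetic progressions;
\item your iteration therefore never yields an expression for $f(2b)/2$ on that set.
\end{itemize}

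The missing idea is the internal representation of Proposition~\ref{prop:internal_rep}, which rests on a Frobenius-linearity observation.
\begin{itemize}
\item After restricting to $\ba\in q\Z^n+\bb$ with $q=p^{t-1}$, the condition $\sum_i \br_i^{q\ba}m_i\in\frm M$ reads $\sum_i x_i^q\overline{m_i}=0$ with $x_i=\overline{\br_i^{\ba}}$.
\item Since $x\mapsto x^q$ is additive in characteristic $p$, the solution set is an $F$-linear subspace of $F^k$.
\item Hence, on exactly the set where $f(\ba)\in\frm M$, some values are linear combinations of the others: $\overline{\br_j^{\ba}}=\sum_{i\le d}c_{ij}\overline{\br_i^{\ba}}$.
\item Lemma~\ref{lem:lifting} lifts these to exact identities $\br_j^{q\ba}=\big(\sum_{i\le d}s_{ij}\br_i^{\ba}\big)^q$ in $R/\frm^t$.
\item Substituting and expanding gives a linear-exponential $\tilde f$ with values in $\frm M$ that agrees with $f$ whenever $f(\ba)\in\frm M$, so $\mZ(f)=\mZ(f\bmod \frm M)\cap\mZ(\tilde f)$.
\end{itemize}
The induction then goes into $\frm M$, which is killed by $\frm^{t-1}$, rather than into $M/\frm^{t-1}M$. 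That way only one layer is crossed at a time. Your ordering would instead need an internal representation valid whenever $f(\ba)\in\frm^{t-1}M$, which crosses $t-1$ layers at once. Your collapsing trick is the special case of this argument in which the linear relation is $\overline{\br_j^{\ba}}=\overline{\br_i^{\ba}}$ and holds identically in $\ba$.
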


To prove Theorem~\ref{thm:local}, we will reduce it to the theorem of Derksen-Masser on the case of fields:

{\renewcommand\footnote[1]{}\thmfield*}

In order to reduce Theorem~\ref{thm:local} to Derksen-Masser, our overall idea is to express a linear-exponential equation over the $R$-module $M$ to an equivalent system of linear-exponential equations over the residue field $R/\frm$.
First we use an example to illustrate this idea and the main difficulties:

\begin{example}[Overview of main idea]\label{example:modfour}
    Consider the following linear-exponential map $f \colon \Z \rightarrow \Z_{/4}(X)$, where $\Z_{/4}(X)$ is considered as a module over itself:
    \begin{equation}\label{eq:example_modfour}
        f(a) \coloneqq (X+1)^a - X^a - 1^a \cdot (2X+1).
    \end{equation}
    For convenience, we write ``$\bmod\, 2$'' for ``$\bmod\, \gen{2}$'' in $\Z_{/4}(X)$.
    First we consider the solution set of the modular equation
    $
    f(a) \equiv 0 \pmod 2
    $.
    This can be written as the zero set of the map $(f \bmod 2) \colon \Z \rightarrow \F_2(X)$, where 
    \[
    (f \bmod 2)(a) = (X+1)^a - X^a - 1^a.
    \]
    In particular, we have $\mZ(f) \subseteq \mZ(f \bmod 2)$, however in general $\mZ(f) \neq \mZ(f \bmod 2)$. 
    By Derksen-Masser, the zero set $\mZ(f \bmod 2)$ is effectively $2$-normal; our hope is to write $\mZ(f)$ as an intersection of $\mZ(f \bmod 2)$ with another $2$-normal set.

    Note that whenever $f(a) \equiv 0 \pmod{2}$, we can write $f(a) = 2 \widehat{f(a)}$ for some $\widehat{f(a)} \in \Z_{/4}(X)$; however the value of $\widehat{f(a)}$ is not defined for the remaining $a$ where $f(a) \not\equiv 0 \pmod{2}$.
    Our aim is to ``complete'' $\widehat{f(a)}$ into a well-defined map over all $a \in \Z$.
    That is, we want to construct a map $g \colon \Z \rightarrow \Z_{/4}(X)$, such that for every $a$ satisfying $f(a) \equiv 0 \pmod{2}$, we have $f(a) = 2g(a)$.
    If furthermore we can construct $g$ to be linear-exponential, then we have
    \begin{align*}
    f(a) = 0 &\; \iff f(a) \equiv 0 \pmod 2, \quad 2g(a) = 0 \\
    &\; \iff f(a) \equiv 0 \pmod 2, \quad g(a) \equiv 0 \pmod 2.
    \end{align*}
    since we work in characteristic $4$.
    Therefore $\mZ(f) = \mZ(f \bmod 2) \cap \mZ(g \bmod 2)$.
    Since $(f \bmod 2)$ and $(g \bmod 2)$ are linear-exponential maps over the residue field $\Z_{/4}(X)/\gen{2} = \F_2(X)$, the $2$-normality of $\mZ(f)$ would follow immediately from Theorem~\ref{thm:DM} as a blackbox.
    \hfill $\blacksquare$
\end{example}

Example~\ref{example:modfour} shows that the reduction from local rings to fields boils down to constructing such a linear-exponential map $g$.
Formally speaking, let $R$ be a local ring with maximal ideal $\frm$.
Given a linear-exponential map $f(\ba)=\sum_{i = 1}^k \br_i^{\ba} m_i$ over an $R$-module $M$, we hope to find $\br'_1, \ldots, \br'_{k'}\in (R^{\times})^n, m'_1, \ldots, m'_{k'}\in \frm M$, such that 
\[
f(\ba) \in \frm M \implies f(\ba) = \sum_{i = 1}^{k'}(\br_i')^{\ba} m'_i.
\]
In other words, we hope to find another representation for $f$ which is ``internal'' to $\frm M$ whenever $f(\ba) \in \frm M$.
Such an internal representation would allow us to rewrite $f(\ba) = 0$ as a system of two equations 
\[
f(\ba) \equiv 0 \pmod{\frm M}, \quad \text{and} \quad \sum_{i = 1}^{k'}(\br_j')^{\ba} m'_j = 0.
\]
This reduces an equation over the module $M$ to a system of two equations, one over $M/\frm M$ and one over $\frm M$, allowing for a proof by induction on the structure of $M$.

Constructing an internal representation is difficult, even for the map $f$ in Example~\ref{example:modfour}.
A more striking example is the map $f(a) = (X+1)^a - (X-1)^a$ in $\Z_{/4}(X)$.
Although $f(a) \equiv 0 \pmod 2$ for all $a \in \Z$, it's not clear how to write down a linear-exponential map $g$ such that $f(a) = 2g(a)$ for all $a$.
In fact, there are explicit examples where an internal representation provably does not exist (see Remark~\ref{rmk:counterexample}).

Nevertheless, we now show that an internal representation can indeed be obtained if the entries of $\br_i$ are $p^{t-1}$-th powers in $R^{\times}$.
This would allow us to separate $f$ into disjoint cases according to the residues of $\ba$ modulo $p^{t-1}$, and obtain an internal representation for each residue.
\begin{proposition}[internal representation]\label{prop:internal_rep}
    Let $R$ be a local ring with maximal ideal $\frm$ and residue field of characteristic $p$.
    Let $M$ be a finitely generated $R$-module with $\frm^tM=0$ for $t \in \N$.
    Let $f:\Z^n\rightarrow M$ be a linear-exponential map defined by $f(\ba)=\sum_{i=1}^k \br_i^{p^{t-1}\ba}m_i$, where $\br_i\in (R^{\times})^n,m_i\in M$. 
    Then there exists an effectively computable linear-exponential map $\tilde f:\Z^n\rightarrow \frm M$, internal to the $R$-module $\frm M$, such that $\tilde f(\ba)=f(\ba)$ whenever $f(\ba)\in \frm M$. 
\end{proposition}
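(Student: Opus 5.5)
The plan is to collapse $f$ using Lemma~\ref{lem:lifting}, then linearize its reduction modulo $\frm M$ over the residue field $k \coloneqq R/\frm$, and finally rewrite the surviving terms so their coefficients lie in $\frm M$.

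\emph{Step 1: merging terms with equal residues.} Since $p\in\frm$, Lemma~\ref{lem:lifting} (applied with $I=\frm$) gives the following. If $r,s\in R^{\times}$ satisfy $r\equiv s \pmod{\frm}$, then $r^{p^{t-1}}\equiv s^{p^{t-1}}\pmod{\frm^t}$. Because $\frm^tM=0$, this yields $r^{p^{t-1}a}m=s^{p^{t-1}a}m$ for all $m\in M$ and $a\in\Z$; negative $a$ are fine because $r,s$ are units. I would partition the indices $1,\dots,k$ according to the residue tuple $\overline{\br_i}\in(k^{\times})^n$, fix one representative $\bs_j$ per class, and put $n_j=\sum_{i\text{ in class }j}m_i$. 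Then
\[
f(\ba)=\sum_j \bs_j^{p^{t-1}\ba}\, n_j ,
\]
and the residue tuples $\overline{\bs_j}$ are pairwise distinct. Since Frobenius is injective on $k$, the tuples $\overline{\bs_j}^{\,p^{t-1}}$ are also pairwise distinct.

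\emph{Step 2: linearizing modulo $\frm M$.} The quotient $M/\frm M$ is a finite-dimensional $k$-vector space. Among the $\overline{n_j}$ I would compute a maximal linearly independent subfamily $\{\overline{n_l}\}_{l\in L}$. For $j\notin L$, write $\overline{n_j}=\sum_{l\in L}\overline{\lambda_{jl}}\,\overline{n_l}$ and lift: $n_j=\sum_l \lambda_{jl}n_l+w_j$ with $\lambda_{jl}\in R$ and $w_j\in\frm M$. Substituting gives
\[
f(\ba)=\sum_{l\in L}c_l(\ba)\,n_l+\sum_{j\notin L}\bs_j^{p^{t-1}\ba}w_j,\qquad c_l(\ba)=\bs_l^{p^{t-1}\ba}+\sum_{j\notin L}\lambda_{jl}\,\bs_j^{p^{t-1}\ba}.
\]
The second sum is already internal to $\frm M$. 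By linear independence of the $\overline{n_l}$, we have $f(\ba)\in\frm M$ if and only if $c_l(\ba)\in\frm$ for every $l\in L$. All of this is effective under the computability assumptions of Section~\ref{subsec:prelim_module}.

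\emph{Step 3: rewriting $c_l(\ba)n_l$ (main obstacle).} It remains to rewrite each term $c_l(\ba)n_l$ as a linear-exponential map with coefficients in $\frm M$, valid whenever $c_l(\ba)\in\frm$. The crucial feature is that every exponent is a $p^{t-1}$-th power. This is exactly what makes the naive example $(X+1)^a-(X-1)^a$ fail and what this statement should exploit.

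The approach I would try first is an identity of the following shape. Put $x_j\coloneqq \bs_j^{\ba}$ and $y\coloneqq\sum_j \mu_j x_j$ with $\mu_j\in R$. Then $\sum_j\mu_j x_j^{p^{t-1}}$ should be expressible as a $\Z$-combination of products $\prod_j x_j^{e_j}$ with $\sum_j e_j=p^{t-1}$, with coefficients built from the $\mu_j$. Each coefficient should either be divisible by a suitable power of $p$ or be multiplied by a power of $y$. This is in the spirit of the expansion in the proof of Lemma~\ref{lem:lifting} and of Witt-vector/Teichmüller congruences. Whenever $y\in\frm$, every surviving monomial would then carry a factor from $\frm$. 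Such a monomial is $\ba\mapsto\prod_j\bs_j^{e_j\ba}$ times a coefficient in $\frm$, hence an exponential term with coefficient in $\frm n_l\subseteq\frm M$.

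The obstacle is that the condition available is $c_l(\ba)\in\frm$, which is a condition on the $p^{t-1}$-th powers, not on $y$ itself. Over $k$, however, $\bar y^{p^{t-1}}=\sum_j\bar\mu_j^{p^{t-1}}\bar x_j^{p^{t-1}}$. So I would first replace each $\lambda_{jl}$ by a lift of a $p^{t-1}$-th root in $k$ where one exists, or otherwise enlarge the coefficients by an inseparable extension. If this identity-based approach breaks down, my fallback is induction on $t$: apply the statement to the $R$-module $\frm M$, which satisfies $\frm^{t-1}(\frm M)=0$, using the extra factor of $p$ in the exponent. This step is where I expect the real difficulty to lie, and I have not checked that either route closes.
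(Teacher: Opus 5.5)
Your Steps 1 and 2 are correct. Step 3, which you leave open, is where the whole content of the proposition lies, and none of your three routes closes it as stated.

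\emph{The identity route.} It needs each $\overline{\lambda_{jl}}$ to be a $q$-th power in $F=R/\frm$, where $q=p^{t-1}$. In that case $c_l(\ba)\in\frm$ becomes the linear condition $\overline{x_l}+\sum_j\overline{\mu_{jl}}\,\overline{x_j}=0$. Lemma~\ref{lem:lifting} then turns this into an exact identity for $x_l^q$, and the multinomial expansion finishes. So this route does give a proof when $F$ is perfect. But residue fields are typically imperfect. In the odd case of Example~\ref{example:continued}, over $F=\F_2(X)$, your Step 2 produces the single condition $\overline{x_3}^2+(X+1)\overline{x_1}^2+X\overline{x_2}^2=0$, and its coefficients are not squares.

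\emph{The inseparable-extension route.} This is only named. You would need a finite local extension $R'\supseteq R$ with the required roots in its residue field. You would also need an argument descending coefficients in $\frm'(M\otimes_R R')$ back to $\frm M$. You supply neither.

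\emph{The induction fallback.} This is circular. Applying the proposition to $\frm M$ presupposes a map already valued in $\frm M$, which is exactly what Step 3 must produce.

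The missing idea is to linearize in the space of residue tuples rather than in $M/\frm M$, treating all the conditions jointly. Write $h(x)=\sum_i x_i^q\overline{m_i}$. Then $f(\ba)\in\frm M$ if and only if $(\overline{\br_1^{\ba}},\dots,\overline{\br_k^{\ba}})\in H\coloneqq h^{-1}(0)$. The set $H$ is an $F$-linear subspace of $F^k$, because $x\mapsto x^q$ is additive in characteristic $p$ and $h(cx)=c^qh(x)$.

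A basis of $H$ yields relations $\overline{\br_j^{\ba}}=\sum_{i\le d}c_{ij}\overline{\br_i^{\ba}}$ for $j>d$, with $c_{ij}\in F$ itself. So you only need lifts $s_{ij}\in R$ of the $c_{ij}$, and Lemma~\ref{lem:lifting} gives $\br_j^{q\ba}=(\sum_{i\le d}s_{ij}\br_i^{\ba})^q$ modulo $\frm^t$. Substituting and expanding gives $f(\ba)=\sum_{i\le d}\br_i^{q\ba}\,(m_i+\sum_{j>d}s_{ij}^q m_j)+p(\cdots)$ whenever $f(\ba)\in\frm M$. The $q$-th powers $s_{ij}^q$ appear automatically. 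The new coefficients lie in $\frm M$ because their residues are the values of $h$ at the basis vectors of $H$.

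Roots enter only when computing $H=H'\cap F^k$ by linear algebra over a finite extension of $F$, never as an extension of $R$. In the example, splitting $x_1(\sqrt X+1)+x_2\sqrt X+x_3=0$ along the $F$-basis $\{1,\sqrt X\}$ gives the two relations $x_1=x_2=x_3$ that your single equation hides.
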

\begin{proof}
    An elementary illustration of the proof will be given in Example~\ref{example:continued}.
    We write $q \coloneqq p^{t-1}$.
    The proof proceeds in two steps.
    In the first step, we will find $d \leq k$ and homogeneous linear polynomials $\phi_{d+1}, \ldots, \phi_k \in R[X_1, \ldots, X_d]$, such that (possibly after rearrangement of indices):
    \begin{equation*}
    f(\ba)=\sum_{i=1}^k \br_i^{q\ba}m_i \in \frm M \implies \br_{d+1}^{q\ba} = \phi_{d+1}(\br_1^{\ba}, \ldots, \br_d^{\ba})^q, \ldots, \br_{k}^{q\ba} = \phi_k(\br_1^{\ba}, \ldots, \br_d^{\ba})^q.
    \end{equation*}
    In the second step, in the linear-exponential map $f(\ba)=\sum_{i=1}^k \br_i^{q\ba}m_i$, we substitute the terms $\br_{d+1}^{q\ba}, \ldots, \br_{k}^{q\ba}$, using $\phi_{d+1}(\br_1^{\ba}, \ldots, \br_d^{\ba})^q, \ldots, \phi_k(\br_1^{\ba}, \ldots, \br_d^{\ba})^q$, and show that the resulting expression $\tilde f(\ba)$ is a linear-exponential map internal to $\frm M$.
    This will yield $f(\ba)\in \frm M \implies \tilde f(\ba)=f(\ba)$.
    The following are the details of the two steps.
    \smallskip
    
    \textbf{Step 1.}
    Let $V=M/\frm M$.
    Since $M$ is finitely generated as an $R$-module, $V$ is a finite dimensional vector space over the residue field $F \coloneqq R/\frm$. 
    Let $\overline{\cdot} \colon M\rightarrow V,\; \overline\cdot \colon R\rightarrow F$ denote the projections modulo $\frm$.
    Let $h:F^k\rightarrow V$ be the map given by $h(x_1, \ldots ,x_k)=\sum_{i = 1}^k x_i^q\overline{m_i}$, then
    \[
    \sum_{i=1}^k \br_i^{q\ba}m_i \in \frm M \iff h\left(\overline{\br_1^{\ba}}, \ldots, \overline{\br_k^{\ba}}\right) = 0_V.
    \]
    Consider the set of points on which $h$ vanishes:
    \begin{equation}\label{eq:def_space_H}
    H \coloneqq \{(x_1, \ldots, x_k) \mid h(x_1, \ldots, x_k) = 0_V\} \subseteq F^k.
    \end{equation}
    Then $\sum_{i=1}^k \br_i^{q\ba}m_i \in \frm M \iff \left(\overline{\br_1^{\ba}}, \ldots, \overline{\br_k^{\ba}}\right) \in H$.
    The key observation is the following:

    \medskip
    \begin{adjustwidth}{3mm}{3mm}
    \textbf{Claim.} $H$ is an $F$-linear subspace of $F^k$.
    \medskip
    
    \noindent$\blacktriangleright$
    Let $(x_1, \ldots ,x_k), (y_1, \ldots ,y_k) \in H$ and $c \in F$, then (since $F$ has characteristic $p$):
    \[
    h(x_1 + y_1, \ldots ,x_k + y_k) = \sum_{i=1}^k (x_i+y_i)^q \overline{m_i} = \sum_{i=1}^k x_i^q \overline{m_i} + \sum_{i=1}^k y_i^q \overline{m_i} = 0_V,
    \]
    and
    \[
    h(cx_1, \ldots , c x_k) = \sum_{i=1}^k (cx_i)^q \overline{m_i} = c^q \sum_{i=1}^k x_i^q \overline{m_i} = 0_V.
    \]
    Therefore $\bx, \by \in H \implies \bx+\by \in H$, and $\bx \in H, c \in F \implies c \bx \in H$, so $H$ is an $F$-linear space.
    \hfill $\blacktriangleleft$
    \end{adjustwidth}
    \medskip

    \noindent In fact, an $F$-basis of $H$ can be effectively computed, but we delay this proof to Lemma~\ref{lem:effective_H}.
    Coming back to the proposition, write $\dim_{F} H=d$. 
    After relabelling the coordinates if necessary, we may assume that the first $d$ coordinate functions of $H$ are linearly independent. 
    It follows that every vector in $H$ is uniquely determined by its first $d$ coordinates, and hence
    \[
    \textstyle
    H = \left\{\left(x_1, \ldots, x_d, \sum_{i = 1}^d c_{i, d+1} x_i, \ldots, \sum_{i = 1}^d c_{i, k} x_i \right) \mid x_1, \ldots, x_d \in F\right\}
    \]
    for suitable $c_{i, j} \in F,\; 1 \leq i \leq d,\; d+1 \leq j \leq k$.

    Recall that $\sum_{i=1}^k \br_i^{q\ba}m_i \in \frm M$ if and only if $\left(\overline{\br_1^{\ba}}, \ldots, \overline{\br_k^{\ba}}\right) \in H$. Therefore
    \begin{equation*}
    \sum_{i=1}^k \br_i^{q\ba}m_i \in \frm M \iff \overline{\br_j^{\ba}}=\sum_{i = 1}^d c_{i,j} \overline{\br_i^{\ba}}, \; \text{ for } j = d+1, \ldots, k. 
    \end{equation*}
    Recall that $r \equiv s \pmod{\frm} \implies r^{p^{t-1}} \equiv s^{p^{t-1}} \pmod{\frm^t}$ (Lemma~\ref{lem:lifting}).
    Since $\frm^t M = 0$, we can without loss of generality suppose $\frm^t = 0$ by replacing $R$ with $R/\frm^t$.
    For each $i, j$, fix an $s_{i,j} \in R$ such that $\overline{s_{i,j}}=c_{i,j}$, then
    \begin{equation}\label{eq:rep_rqa}
    \overline{\br_j^{\ba}}=\sum_{i = 1}^d c_{i,j} \overline{\br_i^{\ba}} \;\iff\; \br_j^{\ba} \equiv \sum_{i = 1}^d s_{i,j}\br_i^{\ba} \pmod{\frm} \;\implies\; \br_{j}^{q\ba} = \left( \sum_{i = 1}^d s_{i,j}\br_i^{\ba} \right)^q,
    \end{equation}
    since $q = p^{t-1}$ and $\frm^t = 0$.
    The last equation in~\eqref{eq:rep_rqa} expresses each $\br_{j}^{q\ba}, j = d+1, \ldots, k$, in terms of $\br_{1}^{\ba}, \ldots, \br_{d}^{\ba}$, valid whenever $\sum_{i=1}^k \br_i^{q\ba}m_i \in \frm M$.
    \smallskip

    \textbf{Step 2.}
    Suppose now $f(\ba)=\sum_{i=1}^k \br_i^{q\ba}m_i \in \frm M$.
    In $f(\ba)=\sum_{i=1}^k\br_i^{q\ba}m_i$, we substitute $\br_{d+1}^{q\ba}, \ldots, \br_{k}^{q\ba},$ using the last equation in~\eqref{eq:rep_rqa} and expand.
    Note that every coefficient of the polynomial $(\sum_{i=1}^d X_i)^q - \sum_{i=1}^d X_i^q$ is divisible by $p$, so
    \begin{align*}
        f(\ba) =\sum_{i=1}^k\br_i^{q\ba}m_i & = \sum_{i=1}^d\br_i^{q\ba}m_i + \sum_{j=d+1}^k \left( \sum_{i = 1}^d s_{i,j}\br_i^{\ba} \right)^q m_{j}\\
        & = \sum_{i=1}^d\br_i^{q\ba}m_i + \sum_{j=d+1}^k \left(\sum_{i = 1}^d s_{i,j}^q\br_i^{q\ba} + p(\cdots)\right)m_{j} \\
        & = \sum_{i=1}^d \br_i^{q\ba} \left(m_i + \sum_{j = d+1}^k s_{i, j}^q m_j\right) + p(\cdots ),
    \end{align*}
    where inside the parenthesis $(\cdots)$ is a linear-exponential form 
    \[
    (\cdots) = \sum_{n_1 + \cdots + n_d = q} \br_1^{n_1\ba} \cdots \br_d^{n_d\ba} \cdot t_{n_1, \ldots, n_d} = \sum_{n_1 + \cdots + n_d = q} (\br_1^{n_1} \cdots \br_d^{n_d})^{\ba} \cdot t_{n_1, \ldots, n_d},
    \]
    with $t_{n_1, \ldots, n_d} \in M$, and $\br_1^{n_1} \cdots \br_d^{n_d} = (r_{11}^{n_1} \cdots r_{d1}^{n_d}, \ldots, r_{1n}^{n_1} \cdots r_{dn}^{n_d})$ $\in (R^{\times})^n$.
    Therefore $p(\cdots)$ is a linear-exponential form internal to $pM\subseteq\frm M$. 
    For the remaining coefficients, we have
    \[
    \overline{\left(m_i + \sum_{j = d+1}^k s_{i, j}^q m_j\right)} = \overline{m_i}+\sum_{j = d+1}^k c_{i,j}^q\overline{m_j} = h(0,\ldots,0,\underset{i\text{-}th}{\underset{\uparrow}{1}},0,\ldots,0,c_{i,d+1},\ldots,c_{i,k}) = 0_V,
    \]
    for $i = 1, \ldots, d$.
    Therefore $m_i + \sum_{j = d+1}^k s_{i, j}^q m_j \in \frm M$.
    It follows that the linear-exponential map
    \[
    \tilde{f}(\ba) \coloneqq \sum_{i=1}^d \br_i^{q\ba} \left(m_i + \sum_{j = d+1}^k s_{i, j}^q m_j\right) + p(\cdots )
    \]
    is internal to $\frm M$, and is equal to $f(\ba)$ whenever $f(\ba) \in \frm M$.
\end{proof}

To complete the effectiveness part of Proposition~\ref{prop:internal_rep}, it remains to compute $H$:
\begin{lemma}\label{lem:effective_H}
    An $F$-basis of $H$ can be effectively computed.
\end{lemma}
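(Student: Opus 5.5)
The plan is to reduce the computation of $H$ to solving linear equations over $F$. The obstruction is that $h$ is not $F$-linear in the variables $x_i$: it is $q$-semilinear, since it factors through the Frobenius power $x\mapsto x^q$. Because $F$ may be imperfect, this map need not be surjective, and we must account for that. We have
\[
H=\{\bx\in F^k \mid (x_1^q,\ldots,x_k^q)\in K\},\qquad K\coloneqq \Big\{\by\in F^k \;\Big|\; \textstyle\sum_i y_i\overline{m_i}=0_V\Big\}.
\]
Here $K$ is an honest $F$-linear subspace. It is computable via computability assumption (ii): over the subring $\widetilde R$ generated by the relevant coefficients, we compute generators for the relation module of $m_1,\ldots,m_k$ modulo $\frm M$ and reduce them mod $\frm$.

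To make this concrete, we first pass to a finitely generated setting. All data live in the subring $\widetilde R$ generated by the $r_{ij}$, their inverses, and generators of $\frm$ (in fact of $\frm\cap\widetilde R$), together with the finitely generated submodule spanned by the $m_i$. Hence we may take $F$ to be a finitely generated field of characteristic $p$, presented as a fraction field of $\F_p[Y_1,\ldots,Y_s]/\frp$. Computing a basis of $K$ is then linear algebra over $F$, with Gröbner bases handling the ideal membership.

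Next we compute $H$ as the preimage of $K$ under Frobenius. Choose a basis $\bb_1,\ldots,\bb_e$ of $K$ and write $\by=\sum_j \lambda_j\bb_j$. The condition $\bx\in H$ means that $(x_i^q)_i$ lies in $K$. Equivalently, since $F^q$ is a subfield, $H$ is the set of $\bx$ with $(x_i^q)\in K\cap (F^q)^k$, and taking $q$-th roots is a bijection $F^q\to F$. So
\[
H=\{\bx \mid \bx^q\in K\cap (F^q)^k\}\cong K\cap (F^q)^k
\]
under the field isomorphism $F\to F^q$, $x\mapsto x^q$. It therefore suffices to compute $K\cap(F^q)^k$ as an $F^q$-vector space and then apply the inverse Frobenius coordinatewise.

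For this, use that $F$ is a finite-dimensional $F^q$-vector space, because $F$ is finitely generated of characteristic $p$. A $p$-basis $\omega_1,\ldots,\omega_N$ is computable: the monomials $Y^{\alpha}$ with $0\le\alpha_i<q$ span $F$ over $F^q$, and we can extract a basis by linear algebra. Write each $\bb_j$ in coordinates as $\sum_\nu \omega_\nu\bb_{j\nu}$ with $\bb_{j\nu}\in(F^q)^k$. A vector $\sum_j\lambda_j\bb_j$ with $\lambda_j=\sum_\mu\omega_\mu\lambda_{j\mu}$ lies in $(F^q)^k$ exactly when all its non-$\omega_1$ coordinates vanish, taking $\omega_1=1$. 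This is a linear system over $F^q$ in the unknowns $\lambda_{j\mu}$. Solving it gives an $F^q$-basis of $K\cap(F^q)^k$, and taking $q$-th roots of the coordinates (computable inside $F^q\cong F$) gives an $F$-basis of $H$.

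The main obstacle I expect is this last step: effectively computing a $p$-basis of $F$ over $F^q$ and performing arithmetic in $F^q$, including extracting $q$-th roots, under only the abstract computability assumptions. I would first try to handle it through an explicit presentation of $F$, using Noether normalization to write $F$ as a finite separable-or-not extension of $\F_p(Y_1,\ldots,Y_s)$, where the $p$-basis is explicit. Alternatively, one can sidestep this by observing that $H$ has a basis obtainable by enumeration: candidate vectors are enumerable, membership in $H$ is decidable, and $\dim H=\dim_{F^q}(K\cap (F^q)^k)$ can be computed. This second route gives a terminating search.
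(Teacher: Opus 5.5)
Your argument is correct in substance. It is essentially the paper's proof transported through the Frobenius map $x\mapsto x^q$, run in the opposite direction.

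The paper goes up. It writes $\overline{m_i}=(m_{i1},\ldots,m_{iu})$ in a basis of $V$ and adjoins only the finitely many roots $m_{ij}^{1/q}$, giving $F'=F(m_{ij}^{1/q})$. It then rewrites $\sum_i x_i^q m_{ij}=0$ as the $F'$-linear equations $\sum_i x_i m_{ij}^{1/q}=0$, and computes $H=H'\cap F^k$ inside $(F')^k$ by linear algebra over $F$. Applying $x\mapsto x^q$ to this gives exactly your $K\cap(F^q)^k$. The difference is that it is computed inside $(F'')^k$, with $F''\coloneqq (F')^q=F^q(m_{ij})$, rather than inside $F^k$.

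What the paper's choice buys is that $F''/F^q$ (equivalently $F'/F$) is finite for every field $F$. Your ambient space $F^k$ is finite-dimensional over $F^q$ only when $F$ has finite $p$-degree. That is what forces your reduction to a finitely generated field and your search for a full $F^q$-basis of $F$, the step you rightly flag as hard. In exchange, your route stays inside $F$ and constructs no field extension. When $F$ is finitely generated, an $F^q$-basis can be read off from a separating transcendence basis.

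Two points need repair.

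\textbf{The reduction to a finitely generated field.} Passing to a finitely generated subring is not harmless, because $H$ depends on the ambient field. Take $\F_p(t)\subset F=\F_p(t^{1/p})$, $q=p$, $V=F$, $\overline{m_1}=1$, $\overline{m_2}=-t$. Then $H=F\cdot(t^{1/p},1)$, but $H\cap\F_p(t)^2=0$.
\begin{itemize}
\item In the paper's application this is moot. The residue field produced by Proposition~\ref{prop:decompose_localize} is the fraction field of $R/\frp$ with $R$ finitely generated, so it is already finitely generated.
\item In general, the clean fix is to replace $K$ by the solution space $K''$ of the same linear system over $F''$. Since $(F^q)^k\subseteq(F'')^k$, one has $K\cap(F^q)^k=K''\cap(F^q)^k$. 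This removes any need for $[F:F^q]<\infty$.
\end{itemize}

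\textbf{The enumeration fallback.} It is circular. Its stopping criterion needs $\dim_{F^q}\bigl(K\cap(F^q)^k\bigr)$, which is exactly the quantity you were trying to avoid computing.
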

\begin{proof}
    Recall that $H$ is defined as the set of $(x_1, \ldots, x_k) \in F^k$ where $h(x_1, \ldots ,x_k)=\sum_{i = 1}^k x_i^q\overline{m_i}$ vanishes.
    Here, $\overline{m_1}, \ldots, \overline{m_k} \in V$.
    Let $u \coloneqq \dim_F V$ and fix a basis of $V$ over $F$. For each $i = 1, \ldots, k$, write $\overline{m_i}$ as $(m_{i1}, \ldots, m_{iu}) \in F^u$ over this basis.
    Let $F'$ be the field extension of $F$ obtained by adjoining the $q$-th roots of the elements $m_{ij}, i = 1, \ldots, k; j =1, \ldots, u$.
    Then the extension $F'/F$ is finite; in particular, $F'$ is a finite dimensional $F$-linear space (see~\cite[Chapter~3.5]{roman2006field}).
    Then (since $q = p^{t-1}$ and $F'$ has characteristic $p$):
    \begin{multline}\label{eq:linear_over_F_root}
        (x_1, \ldots, x_k) \in H \iff \sum_{i = 1}^k x_i^q\overline{m_i} = 0_V \iff \sum_{i = 1}^k x_i^q m_{i1} = \cdots = \sum_{i = 1}^k x_i^q m_{iu} = 0_F \\
        \iff \sum_{i = 1}^k x_i m_{i1}^{1/q} = \cdots = \sum_{i = 1}^k x_i m_{iu}^{1/q} = 0_{F'}.
    \end{multline}
    Let $H'$ be the set of $(x_1, \ldots, x_k) \in \left(F'\right)^k$ that satisfy the system of linear equations in~\eqref{eq:linear_over_F_root}.
    Then $H'$ is an $F'$-linear space and $H = H' \cap F^k$.
    In particular, both $H'$ and $F^k$ are $F$-linear subspaces of $\left(F'\right)^k$ and effectively computable.
    Since $\dim_F \left(F'\right)^k$ is finite, an $F$-basis of $H' \cap F^k$ can be computed by linear algebra over $F$.
\end{proof}

Proposition~\ref{prop:internal_rep} provides the internal representation required for the proof of Theorem~\ref{thm:local}: 
\begin{proof}[Proof of Theorem~\ref{thm:local}]
    Recall that our goal is to prove that the zero set $\mZ(f)$ is effectively $p$-normal, where $f(\ba) = \sum_{i = 1}^k \br_i^{\ba} m_i$ over a linear-exponential map in the finitely generated $R$-module $M$ satisfying $\frm^t M = 0$.
    We proceed by induction on $t$.
    
    If $t=1$, then $\frm M = 0$, and hence $M=M/\frm M$. 
    Viewing $M/\frm M$ as an $R/\frm$-module, we see that $M$ is in fact a finite-dimensional vector space over the residue field $F=R/\frm$.
    Fix a basis of $M$ over $F$ and identify $M$ with $F^d$ for some $d\in\mathbb N$.
    Then $\sum_{i = 1}^k \br_i^{\ba} m_i = 0_M$ if and only if $\sum_{i = 1}^k \br_i^{\ba} m_{i1} = \cdots = \sum_{i = 1}^k \br_i^{\ba} m_{id} = 0_F$, where $(m_{i1},\ldots,m_{id})\in F^d$ denotes the coordinate vector of $m_i$, and the entries of $\br_i$ are considered as elements in the residue field $R/\frm = F$.
    Therefore we are reduced to the case of fields by writing $\mZ(f) = \mZ\big(\sum_{i = 1}^k \br_i^{\ba} m_{i1}\big) \cap \cdots \cap \mZ\big(\sum_{i = 1}^k \br_i^{\ba} m_{id}\big)$.
    By Derksen-Masser (Theorem~\ref{thm:DM}), the zero sets $\mZ(\sum_{i = 1}^k \br_i^{\ba} m_{ij}), j = 1, \ldots, d$, are effectively $p$-normal.
    Therefore $\mZ(f)$ is also effectively $p$-normal by Lemma~\ref{lem:inter_normal}.

    For the induction step, suppose $t \geq 2$ and that effective $p$-normality has been proven in all finitely generated modules $M$ with $\frm^{t-1}M = 0$.
    Consider a finitely generated module $M$ with $\frm^tM = 0$.
    Let $q = p^{t-1}$.
    We can write $\mZ(f)$ as a disjoint union $\bigcup_{\bb \in \{0, 1, \ldots, q-1\}^k} q \cdot \mZ(f_{\bb}) + \bb$, where $f_{\bb}(\ba) \coloneqq f(q\ba+\bb) = \sum_{i = 1}^k \br_i^{q\ba} \cdot (\br_i^{\bb}m_i)$.
    Since $p$-normal sets are closed under finite union and affine transformations, it suffices to show that any linear-exponential map of the form $f(\ba) = \sum_{i = 1}^k \br_i^{q\ba} m_i$ has an effectively $p$-normal zero set.

    By Proposition~\ref{prop:internal_rep}, we can compute a linear-exponential map $\tilde f(\ba) = \sum_{i = 1}^{k'} (\br'_i)^{q\ba} m'_i$ where $\br'_1, \ldots, \br'_{k'}\in (R^{\times})^n, m'_1, \ldots, m'_{k'}\in \frm M$, such that $\tilde f(\ba)=f(\ba)$ whenever $f(\ba) \in \frm M$.
    Thus the equation $f(\ba) = 0$ in $M$ is equivalent to the system of two equations
    \[
    f(\ba) \equiv 0 \pmod{\frm M}, \quad \tilde f(\ba) = 0 \text{ (in $\frm M$).}
    \]
    Therefore $\mZ(f) = \mZ(f \bmod \frm M) \cap \mZ(\tilde f)$.
    Here, $f \bmod \frm M$ and $\tilde f$ are linear-exponential maps over $M/\frm M$ and $\frm M$, respectively.
    Note that $\frm (M/\frm M) = 0$ and $\frm^{t-1}(\frm M) = 0$.
    Apply the induction hypothesis on $M/\frm M$ and on the submodule of $\frm M$ generated by the $m'_i$'s.
    We obtain that both zero sets are effectively $p$-normal.
    Hence $\mZ(f)$ is effectively $p$-normal.
\end{proof}

We now illustrate the proof of Proposition~\ref{prop:internal_rep} and Theorem~\ref{thm:local} by explicitly computing the internal representation and the zero sets in Example~\ref{example:modfour}.

\begin{example}[continuation of Example~\ref{example:modfour}]\label{example:continued}
    We compute the zero set of the following map over $\Z_{/4}(X)$:
    \begin{equation}\label{eq:example_modfour_bis}
        f(a) \coloneqq (X+1)^a - X^a - 1^a \cdot (2X+1).
    \end{equation}
    We split into two cases according to the residue of $a$ modulo $2$.
    \medskip

    \textbf{When $a$ is even.} Write $a = 2b$, then expression~\eqref{eq:example_modfour_bis} becomes
    \[
    f(b) = (X+1)^{2b} - X^{2b} - 1^{2b} \cdot (2X+1),
    \]
    so
    \[
    (f \bmod 2)(b) = (X+1)^{2b} - X^{2b} - 1^{2b} \in \F_2(X).
    \]
    Define the map $h \colon \F_2(X)^3 \rightarrow \F_2(X)$,
    \[
    h(x_1, x_2, x_3) = x_1^2 - x_2^2 - x_3^2.
    \]
    This allows us to write
    \[
    (f \bmod 2)(b) = h\big((X+1)^b, X^b, 1^b\big).
    \]
    Since $x_1^2 - x_2^2 - x_3^2 = x_1^2 + x_2^2 + x_3^2 = (x_1 + x_2 + x_3)^2 = (x_1 + x_2 - x_3)^2$ in characteristic $2$, we have
    \begin{align*}
    H \coloneqq h^{-1}(0) & = \left\{(x_1, x_2, x_3) \in \F_2(X)^3 \;\middle|\; x_1^2 - x_2^2 - x_3^2 = 0 \right\}. \\
    & = \{(x_1, x_2, x_1 + x_2) \mid x_1, x_2 \in \F_2(X)\}.
    \end{align*}
    Therefore
    \[
    f(b) \equiv 0 \pmod{2} \; \iff \big((X+1)^b, X^b, 1^b\big) \in H \iff  1^b \equiv (X+1)^b + X^b \pmod{2}.
    \]
    By Lemma~\ref{lem:lifting}, we have
    \begin{equation}\label{eq:replace}
    1^b \equiv (X+1)^b + X^b \pmod 2 \; \implies 1^{2b} = \big((X+1)^b + X^b\big)^2.
    \end{equation}
    This means that whenever $f(b) \equiv 0 \pmod 2$, we can replace $1^{2b}$ by $\big((X+1)^b + X^b\big)^2$.
    Hence
    \begin{align*}
    f(b) \equiv 0 \pmod 2 \implies f(b) & = (X+1)^{2b} - X^{2b} - 1^{2b} \cdot (2X+1) \\
    & = (X+1)^{2b} - X^{2b} - \big((X+1)^b + X^b\big)^2 \cdot (2X+1) \\
    & = - (X+1)^{2b} \cdot 2X - X^{2b} \cdot (2X + 2) - (X^2+X)^b \cdot 2.
    \end{align*}
    Thus whenever $f(b) \equiv 0 \pmod 2$, we have an internal representation $f = 2g$ with
    \[
    g(b) \coloneqq - (X+1)^{2b} \cdot X - X^{2b} \cdot (X + 1) - (X^2+X)^b.
    \]
    Therefore $\mZ(f) = \mZ(f \bmod 2) \cap \mZ(g \bmod 2)$, where both $\mZ(f \bmod 2)$ and $\mZ(g \bmod 2)$ are effectively $2$-normal since they are linear-exponential maps over the residue field $\F_2(X)$ (Theorem~\ref{thm:DM}).

    We can explicitly compute these zero sets:
    by Example~\ref{example:Derksen} we have $\mZ(f \bmod 2) = \{2^k \mid k \in \N\}$, and by direct computation we have $\mZ(g \bmod 2) = \{0, 1\}$.
    Their intersection is $\{1\}$.
    Therefore, the only solution to $f = 0$ with even $a$ is $a = 2b = 2$.
    \medskip

    \textbf{When $a$ is odd.} The strategy is the same as the previous case.
    Write $a = 2b+1$, then expression~\eqref{eq:example_modfour_bis} becomes
    \[
    f(b) = (X+1)^{2b} \cdot (X+1) - X^{2b} \cdot X - 1^{2b} \cdot (2X+1).
    \]
    Define the map $h \colon \F_2(X)^3 \rightarrow \F_2(X)$, 
    \[
    h(x_1, x_2, x_3) = x_1^2 \cdot (X+1) + x_2^2 \cdot X + x_3^2.
    \]
    This allows us to write
    $
    (f \bmod 2)(b) = h\big((X+1)^b, X^b, 1^b\big)
    $.
    This time, we have
    \begin{align*}
    H = h^{-1}(0)
    & = \left\{(x_1, x_2, x_3) \in \F_2(X)^3 \;\middle|\; \left(x_1 \cdot (\sqrt{X} + 1) + x_2 \cdot \sqrt{X} + x_3\right)^2 = 0 \right\} \\
    & = \left\{(x_1, x_2, x_3) \in \F_2(\sqrt{X})^3 \;\middle|\; x_1(\sqrt{X} + 1) + x_2 \sqrt{X} + x_3 = 0 \right\} \cap \F_2(X)^3 \\
    & = \left\{(x_1, x_2, x_3) \in \F_2(X)^3 \;\middle|\; x_1\sqrt{X} + x_2 \sqrt{X} = 0, x_1 + x_3 = 0 \right\} \\
    & = \left\{(x_1, x_2, x_3) \in \F_2(X)^3 \;\middle|\; x_1 + x_2 = 0, x_1 + x_3 = 0 \right\} \\
    & = \left\{(x_1, x_1, x_1) \;\middle|\; x_1 \in \F_2(X)\right\}.
    \end{align*}
    Therefore
    \[
    f(b) \equiv 0 \pmod 2 \; \iff \big((X+1)^b, X^b, 1^b\big) \in H \iff (X+1)^b \equiv X^b \equiv 1^b \pmod 2.
    \]
    By Lemma~\ref{lem:lifting}, we have
    \[
    (X+1)^b \equiv X^b \equiv 1^b \pmod 2 \; \implies (X+1)^{2b} = X^{2b} = 1^{2b}.
    \]
    This means that whenever $f(b) \equiv 0 \pmod 2$, we can replace both $X^{2b}$ and $1^{2b}$ by $(X+1)^{2b}$.
    Hence
    \begin{samepage}
    \begin{align*}
    f(b) \equiv 0 \pmod 2 \; \implies f(b) & = (X+1)^{2b} \cdot (X+1) - (X+1)^{2b} \cdot X - (X+1)^{2b} \cdot (2X+1) \\
    & = (X+1)^{2b} \cdot (-2X).
    \end{align*}
    \end{samepage}
    Thus whenever $f(b) \equiv 0 \pmod 2$, we have an internal representation $f = 2g$ with
    \[
    g(b) = (X+1)^{2b} \cdot (-X).
    \]
    Therefore $\mZ(f) = \mZ(f \bmod 2) \cap \mZ(g \bmod 2)$, reducing to the case over the residue field $\F_2(X)$.
    Explicitly, $\mZ(f \bmod 2) = \{0\}$, $\mZ(g \bmod 2) = \emptyset$, so there are no solutions to $f = 0$ with odd $a$.
    \hfill $\blacksquare$
\end{example}

Finally, the general case of Theorem~\ref{thm:main} follows immediately from the local case (Theorem~\ref{thm:local}):

\begin{proof}[Proof of Theorem~\ref{thm:main}]
    Let $f \colon \Z^n \rightarrow M$ be a linear-exponential map where $\ell M = 0$.
    By Proposition~\ref{prop:decompose_localize} we can write $\mZ(f) = \bigcap_{i = 1}^d \mZ(f_i)$, and by Theorem~\ref{thm:local} each $\mZ(f_i)$ is effectively $p_i$-normal for some prime $p_i \mid \ell$.
    Grouping up the $p$-normal sets for the same $p$ and replacing their intersection with a single $p$-normal set, we obtain $\mZ(f) = \bigcap_{p \mid \ell} S_p$ where each $S_p$ is effectively $p$-normal.
\end{proof}

\begin{remark}\label{rmk:counterexample}
    In the statement of Proposition~\ref{prop:internal_rep}, it is essential that the map $f(\ba)=\sum_{i=1}^k \br_i^{p^{t-1}\ba}m_i$ is defined using the $p^{t-1}$-th powers of $\br_i$.
    Below, we give two examples of a linear-exponential map $f(\ba)=\sum_{i=1}^k \br_i^{\ba}m_i$ where an internal representation in $\frm M$ does not exist, despite $f$ taking values only in $\frm M$.

    \begin{enumerate}[nosep, label=(\arabic*)]
        \item Let $R = \F_2[X]/\gen{X^2}$, then $R$ is a local ring of four elements, with maximal ideal $\frm=XR$.
        Take $M = R$ and define the linear-exponential map $f \colon \Z \rightarrow M$, 
        \[
        f(a) = (X+1)^a- 1^a.
        \]
        Then $f(a) = aX \in \frm M$ for all $a \in \Z$.
        Let $\overline{\cdot} \colon R \rightarrow R/\frm = \F_2$ denote the projection.
        Suppose we can represent $f(a)$ internally in $\frm M$ as $\sum_{i = 1}^k r_i^a (Xm_i)$ with $r_i \in R^{\times} = R \setminus \frm$.
        Then $a \equiv \sum_{i = 1}^k \overline{r_i}^a \overline{m_i} \pmod{2}$ for all $a\in\Z$. 
        But $r_i \in R^{\times}$ means $\overline{r_i} \neq 0$, so $\overline{r_i}^a  = 1^a = 1$ is constant.
        Then $\sum_{i = 1}^k \overline{r_i}^a \overline{m_i}$ is constant, a contradiction.
        \item Let $R = \Z_{/p^2}$ where $p$ is a prime number, then $R$ is a local ring with maximal ideal $\frm=pR$.
        Take $M = R$ and define the linear-exponential map $f \colon \Z \rightarrow M$, 
        \[
        f(a) = (p+1)^a- 1^a.
        \]
        Then $f(a) = ap \in \frm M$ for all $a \in \Z$.
        Suppose we can represent $f(a)$ internally in $\frm M$ as $\sum_{i = 1}^k r_i^a (pm_i)$ with $r_i \in R^{\times} = R \setminus pR$.
        Then $a \equiv \sum_{i = 1}^k r_i^a m_i \pmod{p}$ for all $a$.
        Replacing $a$ with $(p-1)a$, we get 
        \[
        (p-1)a \equiv \sum_{i = 1}^k r_i^{(p-1)a} m_i \equiv \sum_{i = 1}^k 1^{a} m_i = \sum_{i = 1}^k m_i \pmod{p}
        \]
        by Fermat's little theorem.
        This is a contradiction since the left hand side depends on $a$, while the right hand side is constant.
    \end{enumerate}
    \smallskip
    Note that in either example, replacing $a$ with $pa$ makes the map $f$ a constant in $\frm M$, which results in an internal representation.
\end{remark}

\section{Multi-dimensional Skolem-Mahler-Lech in positive characteristic}\label{sec:multi_dim}

\subsection{Multi-dimensional recurrence sequences and formal power series}\label{subsec:def_LRS}

Let $M$ be a module over a ring $R$.
Recall that we define an \emph{$n$-dimensional linear recurrence sequence} to be a map $\gamma \colon \N^n \rightarrow M$, for which there exists an integer $d \geq 1$ and coefficients $c_{ij} \in R, 1 \leq i \leq n, 1 \leq j \leq d$, such that the following recurrence holds for $i = 1, \ldots, n$:
\begin{align}\label{eq:recurrence_def}
\gamma(a_1, \ldots, a_i, \ldots, a_n) = c_{i1} \cdot \gamma(a_1, \ldots, a_i-1, \ldots, a_n) + \cdots + c_{id} \cdot \gamma(a_1, \ldots, a_i-d, \ldots, a_n), \quad & \\
\text{ for all $(a_1, \ldots, a_n) \in \N^n$ with $a_i\geq d$}.& \nonumber
\end{align}
In other words, we require that $\gamma(a_1, \ldots, a_n)$ satisfies a linear recurrence in the dimensions where the index is at least $d$.
Thus, $\gamma$ is completely determined by its \emph{initial values}
\[
\gamma(a_1, \ldots, a_n) \in M, \quad (a_1, \ldots, a_n) \in \{0, 1, \ldots, d-1\}^n.
\]
Note that this definition includes the case where the order $d$ is different in each dimension, as we can simply take $d$ to be their maximum.

Let $R[[T_1,\ldots,T_n]]$ denote the ring of formal power series in the variables $T_1, \ldots, T_n$ over $R$.
It consists of the formal sums
\[
\sum_{(a_1, \ldots, a_n) \in \N^n} r_{a_1, \ldots, a_n} T_1^{a_1} \cdots T_n^{a_n},
\]
where $r_{a_1, \ldots, a_n} \in R$.
Similarly, let $M[[T_1,\ldots,T_n]]$ denote the set of formal sums
\[
\sum_{(a_1, \ldots, a_n) \in \N^n} m_{a_1, \ldots, a_n} T_1^{a_1} \cdots T_n^{a_n},
\]
where $m_{a_1, \ldots, a_n} \in M$. 
Then $M[[T_1,\ldots,T_n]]$ is naturally an $R[[T_1,\ldots,T_n]]$-module.

Given a sequence $\gamma \colon \N^n \rightarrow M$, we can define the power series associated to $\gamma$:
\[
\phi_{\gamma}(T_1, \ldots, T_n) \coloneqq \sum_{(a_1, \ldots, a_n) \in \N^n} \gamma(a_1, \ldots, a_n) T_1^{a_1} \cdots T_n^{a_n} \in M[[T_1, \ldots, T_n]].
\]
Given the recurrence relations~\eqref{eq:recurrence_def} for $i = 1, \ldots, n$, we define their associated \emph{recurrence polynomials} (or \emph{characteristic polynomials}) to be
\[
g_i(T) \coloneqq T^{d} - c_{i1} T^{d-1} - c_{i2} T^{d-2} - \cdots - c_{id} \in R[T], \quad i = 1, \ldots, n.
\]
For a univariate polynomial $g$, define its \emph{reciprocal} to be $g^\circ(T) \coloneqq T^{\deg(g)}g(T^{-1})$.
Then
\begin{equation}\label{eq:gi_circ}
g_i^\circ(T) = 1 - c_{i1} T - c_{i2} T^{2} - \cdots - T^{d} c_{id}.
\end{equation}
It is easy to see that every $g \in R[[T]]$ with constant coefficient $1$ is invertible.
In particular, the inverses $\frac{1}{g_i^\circ(T)}$ are well-defined elements in $R[[T]]$.

\begin{observation}
    A sequence $\gamma \colon \N^n \rightarrow M$ satisfies the recurrence relations~\eqref{eq:recurrence_def} for $i = 1, \ldots, n$, if and only if
    \begin{equation}\label{eq:phi_rat}
    \phi_{\gamma}(T_1, \ldots, T_n) = \frac{h(T_1, \ldots, T_n)}{\prod_{i=1}^n g_i^\circ(T_i)}
    \end{equation}
    for some finitely supported $h(T_1,\ldots,T_n)\in M[[T_1,\ldots,T_n]]$ with degree at most $d-1$ in each variable.
\end{observation}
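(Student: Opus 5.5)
The plan is to multiply $\phi_\gamma$ by $P \coloneqq \prod_{i=1}^n g_i^\circ(T_i)$ and compare coefficients, proving both implications. Since each $g_i^\circ$ has constant coefficient $1$, so does $P$, and hence $P$ is invertible in $R[[T_1,\ldots,T_n]]$. Therefore~\eqref{eq:phi_rat} is equivalent to the statement that $h \coloneqq P\cdot\phi_\gamma$ is finitely supported with degree at most $d-1$ in each variable.

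\textbf{Key identity.} Introduce the operator $L_i \coloneqq g_i^\circ(T_i)\cdot(-)$ on $M[[T_1,\ldots,T_n]]$. By~\eqref{eq:gi_circ}, the coefficient of $T^{\ba}$ in $L_i\psi$ is
\[
\psi(\ba) - \sum_{j=1}^{d} c_{ij}\,\psi(\ba - j\boldsymbol{e}_i),
\]
with the convention that $\psi$ vanishes at indices having a negative coordinate. Here $\boldsymbol{e}_i$ is the $i$-th unit vector. Consequently, $\gamma$ satisfies the $i$-th recurrence exactly when $L_i\phi_\gamma$ has no monomials with $a_i \geq d$, that is, when $L_i\phi_\gamma$ has degree at most $d-1$ in $T_i$.

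\textbf{($\Rightarrow$)} First I will observe that $L_i$ multiplies by a polynomial in $T_i$ alone, and that the $L_i$ commute. Multiplying by a polynomial in $T_j$ with $j \neq i$ does not increase the $T_i$-degree. So if each $L_i\phi_\gamma$ has $T_i$-degree at most $d-1$, then $h = L_1\cdots L_n\phi_\gamma$ has $T_i$-degree at most $d-1$ for every $i$. Indeed, for each $i$ we can write $h = \big(\prod_{j\neq i} L_j\big)(L_i\phi_\gamma)$. Hence $h$ is supported in $\{0,\ldots,d-1\}^n$.

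\textbf{($\Leftarrow$)} Suppose $\phi_\gamma = h/P$ with $h$ of degree at most $d-1$ in each variable. Then
\[
L_i\phi_\gamma = \frac{h}{\prod_{j\neq i} g_j^\circ(T_j)}.
\]
The inverse $\prod_{j\neq i} g_j^\circ(T_j)^{-1}$ is a power series in the variables $T_j$ with $j\neq i$ only. Multiplying $h$ by it therefore keeps the $T_i$-degree at most $d-1$. By the key identity, the $i$-th recurrence then holds for every $i$.

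The only delicate point is the bookkeeping in the key identity, namely checking that the boundary convention matches the condition ``for all $\ba$ with $a_i\geq d$'' in~\eqref{eq:recurrence_def}. Apart from that, the argument is a routine coefficient comparison.
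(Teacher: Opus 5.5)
Your proposal is correct and follows essentially the same route as the paper. Both compare coefficients of $g_i^\circ(T_i)\phi_\gamma$ to turn the $i$-th recurrence into a $T_i$-degree bound, and then use that multiplying by polynomials in the other variables, or by their inverses, does not raise the $T_i$-degree; your explicit converse via $g_i^\circ(T_i)\phi_\gamma = h/\prod_{j\neq i} g_j^\circ(T_j)$ spells out a step the paper leaves implicit.
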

\begin{proof}
Fix $i\in\{1,\ldots,n\}$. The coefficient of $T_1^{a_1}\cdots T_n^{a_n}$ in $g_i^\circ(T_i)\phi_\gamma(T_1,\ldots,T_n)$, where $a_i\ge d$, is
\[
    \gamma(a_1, \ldots, a_i, \ldots, a_n) - c_{i1} \cdot \gamma(a_1, \ldots, a_i-1, \ldots, a_n) - \cdots - c_{id} \cdot \gamma(a_1, \ldots, a_i-d, \ldots, a_n).
\]
Hence, $\gamma$ satisfies the recurrence relation~\eqref{eq:recurrence_def} in the $i$-th dimension if and only if $g_i^\circ(T_i)\phi_\gamma(T_1,\ldots,T_n)$ contains no monomials whose degree in $T_i$ is at least $d$.

Since multiplication by $g_i^\circ(T_i)$ affects only the variable $T_i$, it leaves the degrees in the remaining variables unchanged. Therefore, $\gamma$ satisfies~\eqref{eq:recurrence_def} in every dimension if and only if
\[
\textstyle
h(T_1,\ldots,T_n)
\coloneqq
\prod_{i=1}^n g_i^\circ(T_i)\,
\phi_\gamma(T_1,\ldots,T_n)
\]
has degree at most $d-1$ in each variable.
\end{proof}

From the expression~\eqref{eq:phi_rat} we see that any monic multiple of a recurrence polynomial $g_i$ is again a recurrence polynomial.
Therefore we may replace $g_1,\ldots,g_n$ by their product $g=g_1\cdots g_n$, \ul{and assume without loss of generality that $\gamma$ has the same recurrence polynomial $g$ in every dimension.}

\medskip

Below we give some examples of multi-dimensional linear recurrence sequences.

\begin{example}\label{example:multi_dim_LRS}
    Let $R$ be a ring and $M$ be an $R$-module.
    \begin{enumerate}[label=(\roman*), topsep=0pt, itemsep=0em, leftmargin=0.7cm]
        \item
        Let $\alpha, \beta \colon \N \rightarrow M$ be one-dimensional linear recurrence sequences.
        Then
        \[
        \gamma(a_1, a_2) \coloneqq \alpha(a_1) + \beta(a_2)
        \]
        is a 2-dimensional linear recurrence sequence.
        Indeed, $\gamma$ satisfies the same recurrence relation as $\alpha$ in the direction of $a_1$, and the same recurrence relation as $\beta$ in the direction of $a_2$.

        We now compute the associated power series $\phi_\gamma$ in terms of $\alpha$ and $\beta$.
        Let $g_{\alpha}, g_{\beta}$ be the recurrence polynomials of $\alpha, \beta$ and write their associated power series as $\phi_{\alpha}(T) = \frac{h_{\alpha}(T)}{g_{\alpha}^{\circ}(T)}, \phi_{\beta}(T) = \frac{h_{\beta}(T)}{g_{\beta}^{\circ}(T)}$.
        Then
        \begin{align*}\label{eq:example_sum_LRS}
        \phi_{\gamma}(T_1, T_2) & = \sum_{a_1, a_2 \in \N} (\alpha(a_1) + \beta(a_2))T_1^{a_1}T_2^{a_2}
         = \sum_{a_1 \in \N} \alpha(a_1) T_1^{a_1} \sum_{a_2 \in \N} T_2^{a_2} + \sum_{a_1 \in \N} T_1^{a_1} \sum_{a_2 \in \N} \beta(a_2) T_2^{a_2}  \nonumber\\
        & = \phi_{\alpha}(T_1) \cdot \frac{1}{1 - T_2} + \frac{1}{1 - T_1} \cdot \phi_{\beta}(T_2) 
         = \frac{h_{\alpha}(T_1)}{g_{\alpha}^{\circ}(T_1)(1 - T_2)} + \frac{h_{\beta}(T_2)}{(1 - T_1)g_{\beta}^{\circ}(T_2)} \nonumber\\
        & = \frac{H(T_1, T_2)}{(1 - T_1)g_{\alpha}^{\circ}(T_1) \cdot (1 - T_2)g_{\beta}^{\circ}(T_2)},
        \end{align*}
        where $H(T_1, T_2) = (1-T_1)g_{\beta}^{\circ}(T_2)h_{\alpha}(T_1) + (1-T_2)g_{\alpha}^{\circ}(T_1)h_{\beta}(T_2)$. 
        We can take the recurrence polynomial of $\gamma$ (in both dimensions) to be any common multiple of $(T - 1)g_{\alpha}(T)$ and $(T - 1)g_{\beta}(T)$, such as $(T-1)g_{\alpha}(T)g_{\beta}(T)$.

        Note that if we replace $\beta$ by $-\beta$, then deciding whether $\mZ(\gamma)$ is nonempty is equivalent to determining whether the sets $\{\alpha(k) \mid k \in \N\}$ and $\{\beta(m) \mid m \in \N\}$ have a common element.

        \item The above example generalizes to the sum of any $n \geq 2$ sequences.
        In their second joint paper, Derksen and Masser~\cite{derksen2015linear} showed that the set $\{(a_1, \ldots, a_n) \in \N^n\mid \sum_{i=1}^n \gamma_i(a_i)=0\}$, where $\gamma_1(a),\ldots,\gamma_n(a)$ are recurrence sequences over a field of characteristic $p$, is $p$-normal in $\N^n$. 
        We observe that $\gamma(a_1, \ldots, a_n) \coloneqq \sum_{i=1}^n \gamma_i(a_i)$ is an $n$-dimensional recurrence sequence in our sense.
        Therefore, Theorem~\ref{thm:LRS} generalizes~\cite{derksen2015linear} in two ways: it works on rings and modules instead of fields, and accepts more general recurrence sequences.

        \item 
        Generalized linear-exponential maps provide natural examples of $n$-dimensional linear recurrence sequences.
        Let $\bs = (s_1, \ldots, s_n)$ be a tuple of \emph{not necessarily invertible} elements in $R$ and let $m \in M$.
        Define $\gamma(a_1, \ldots, a_n) \coloneqq s_1^{a_1} \cdots s_n^{a_n} m$ for $(a_1, \ldots, a_n) \in \N^n$, then its associated power series is 
        \begin{multline*}
        \phi_{\gamma}(T_1, \ldots, T_n)
         = \sum_{(a_1, \ldots, a_n) \in \N^n}  s_1^{a_1} \cdots s_n^{a_n} m \cdot T_1^{a_1} \cdots T_n^{a_n}
         = \sum_{a_1 = 0}^{\infty} \cdots \sum_{a_d = 0}^{\infty}  (s_1T_1)^{a_1} \cdots (s_nT_n)^{a_n} m \\
         = \frac{m}{(1 - s_1T_1) \cdots (1 - s_nT_n)}.
        \end{multline*}
        We can see that $\phi_{\gamma}$ satisfies the form~\eqref{eq:phi_rat}, therefore $\gamma$ is a linear recurrence sequence of dimension $n$.
        Since $\phi_{\gamma_1 + \gamma_2} = \phi_{\gamma_1} + \phi_{\gamma_2}$, we can see that if $\gamma_1$ and $\gamma_2$ are $n$-dimensional linear recurrence sequences, then so is $\gamma_1 + \gamma_2$.
        Therefore any linear-exponential sum $\gamma(\ba) = \sum_{i=1}^k \bs_i^{\ba} m_i$ where $\bs_1, \ldots, \bs_k \in R^n, m_1, \ldots, m_k \in M$ is an $n$-dimensional linear recurrence sequence over the $R$-module $M$.

        \item
        We compute an explicit example which will appear in the proof of Theorem~\ref{thm:LRS}.
        For $\ell \geq 1$ and $s_1, \ldots, s_{\ell} \in R$, define
        \[
        \gamma(a_1, \ldots, a_n) \coloneqq \sum_{(i_1, \ldots, i_n) \in \{1, \ldots, \ell\}^n} s_{i_1}^{a_1} \cdots s_{i_n}^{a_n} \cdot m_{i_1, \ldots, i_n},
        \]
        where $m_{i_1, \ldots, i_n} \in M$.
        That is, $\gamma$ is the linear combination of all possible $n$-fold exponential products with bases among $\{s_1, \ldots, s_{\ell}\}$.
        Then its associated power series is
        \begin{align*}
        \phi_{\gamma}(T_1, \ldots, T_n) & = \sum_{(a_1, \ldots, a_n) \in \N^n} \sum_{(i_1, \ldots, i_n) \in \{1, \ldots, \ell\}^n} s_{i_1}^{a_1} \cdots s_{i_n}^{a_n} \cdot m_{i_1, \ldots, i_n} T_1^{a_1} \cdots T_n^{a_n} \\
        & = \sum_{(i_1, \ldots, i_n) \in \{1, \ldots, \ell\}^n} \frac{m_{i_1, \ldots, i_n}}{(1-s_{i_1}T_1) \cdots (1-s_{i_n}T_n)} \\
        & = \frac{\sum_{(i_1, \ldots, i_n) \in \{1, \ldots, \ell\}^n} m_{i_1, \ldots, i_n} \prod_{j = 1}^n \prod_{1 \leq i \leq \ell, i \neq i_j} (1-s_{i}T_j)}{\prod_{j = 1}^n (1-s_1T_j) (1-s_2T_j) \cdots (1-s_{\ell}T_j)}.
        \end{align*}
        This satisfies the form~\eqref{eq:phi_rat} since the denominator has degree at most $\ell-1$ in each $T_j$.
        Therefore $\gamma$ has the recurrence polynomial $g(T) = ((1-s_1T_j) (1-s_2T_j) \cdots (1-s_{\ell}T_j))^{\circ} = (T - s_1)(T - s_2) \cdots (T - s_{\ell})$ in each of the $n$ dimensions.
        \hfill $\blacksquare$
    \end{enumerate}
\end{example}
 
\subsection{p-normal sets in $\N^n$}\label{subsec:pnormal_N}
We define $p$-normal sets in $\mathbb N^n$ using a simplified reformulation of~\cite{derksen2015linear}.

\begin{definition}[$p$-normal sets in $\N^n$]\label{def:pnormal_N}
    For $t \in \N$, let
    $
    \mP(t) = \big\{ \{0\}, \{1\}, \cdots \{t-1\}, \{a \in \N \mid a \geq t\} \big\}.
    $
    Thus $\mP(t)$ is the partition of $\N$ into singleton sets $\{0\}, \ldots, \{t-1\}$ together with the tail $\{a \in \N \mid a \geq t\}$.
    Given $t_1, \ldots, t_n \in \N$, the corresponding \emph{grid partition} of $\N^n$ is
    \[
    \N^n = \bigcup_{A_1 \in \mP(t_1)} \cdots \bigcup_{A_n \in \mP(t_n)} (A_1 \times \cdots \times A_n).
    \]
    For example, taking $t_1 = t_2 = 1$ gives the partition of $\N^2$ into four cells
    $\{(0,0)\}$, $\{0\} \times \{a \geq 1\}$, $\{a \geq 1\} \times \{0\}$, $\{a \geq 1\} \times \{a \geq 1\}$.
    
    A set $S \subseteq \N^n$ is called \emph{$p$-normal in $\N^n$}, if there exists a grid partition as above, such that
    \[
    S = \bigcup_{A_1 \in \mP(t_1)} \cdots \bigcup_{A_n \in \mP(t_n)} \big(T_{A_1, \ldots, A_n} \cap (A_1 \times \cdots \times A_n)\big),
    \]
    where each $T_{A_1, \ldots, A_n}$ is a $p$-normal set in $\Z^n$.
    In other words, $S$ is $p$-normal in $\N^n$ if, on each cell of a suitable grid partition, it coincides with the restriction of a $p$-normal set in $\Z^n$.
\end{definition}

Since the intersection of two $p$-normal sets in $\Z^n$ is $p$-normal in $\Z^n$ (Lemma~\ref{lem:inter_normal}), by considering each grid cell we immediately obtain that the intersection of two $p$-normal sets in $\N^n$ is $p$-normal in $\N^n$.
Furthermore, it is easy to see that $p$-normality is preserved under scaling and translation, i.e., if $S$ is $p$-normal in $\N^n$, then $q \cdot S$ and $S + \bb$ are $p$-normal in $\N^n$ for $q \in \N_{>0}$ and $\bb \in \N^n$.

Remark that $p$-normal sets in $\N^n$ are definable in the existential first-order theory of the structure $\gen{\Z; 0, 1, <, +, p^{\N}}$, where $p^{\N}$ is the set of positive integer powers of $p$ (see~\cite{karimov2025decidability} for a detailed definition).\footnote{In our definition of $p$-normal sets of $\Z^n$ (Definition~\ref{def:pnormal}), it is necessary to express the set $\{p^{\ell k} \mid k \in \N\}$ for any $\ell \geq 1$. This can be done using the predicate $z \in p^{\N}$ together with the modular constraint $z \equiv 1 \pmod{p^{\ell} - 1}$, which are expressible in the existential fragment of $\gen{\Z; 0, 1, +, p^{\N}}$.}
Therefore, deciding emptiness of the intersection $\bigcap_{i = 1}^k S_{p_i}$ where each $S_{p_i}$ is $p_i$-normal in $\N^n$, reduces to the existential first-order theory of the structure $\gen{\Z; 0, 1, <, +, p_1^{\N}, \ldots, p_k^{\N}}$.
This is a difficult problem in general.
The case of $k = 1$ is decidable following the classical work of B\"{u}chi and Sem\"{e}nov~\cite{buchi1960weak, semenov1980certain}.
The case of $k=2$ was recently proved decidable by Karimov, Luca, Nieuwveld, Ouaknine and Worrell~\cite{karimov2025decidability} using Baker's theorem; see~\cite{bajpai2023effective, hieronymi2022strong, HieronymiReitmeirWang2026} for related results.
The case of $k \geq 3$ already subsumes longstanding open problems in number theory, such as finding all the finitely many non-negative integer solutions to the Diophantine equation $3^a + 5^b - 7^c = 1$.

Note that in the special case of dimension $n = 1$, the emptiness of $\bigcap_{i = 1}^k S_{p_i}$ is known to be decidable by a result of Dong and Shafrir~\cite{dong2025skolemproblemringspositive} (see Theorem~\ref{thm:LRS_ring}).

\subsection{Proof of multi-dimensional Skolem-Mahler-Lech}

In this section we give the proof of:
\thmLRS*

We need some preparations.
Recall that we can without loss of generality suppose $\gamma$ to have the same recurrence polynomial $g(T)$ in every dimension.
By definition, $g$ is monic (i.e., its leading coefficient is $1$).
We say that a monic polynomial \emph{splits} over a ring $R$, if it can be factorized into a product $(T-r_1) \cdots (T - r_d)$ with $r_1, \ldots, r_d \in R$, not necessarily distinct.
First, we need to extend the ring $R$ (and correspondingly, the module $M$), so that $g$ splits over $R$.

\begin{lemma}[folklore]\label{lem:split_extension}
    Let $R$ be a ring, $M$ be an $R$-module, and $g \in R[T]$ be a monic polynomial. We can compute a ring extension $\widetilde R \supseteq R$, finitely generated as an $R$-module, such that $g$ splits over $\widetilde R$, and such that the map $M\rightarrow M\otimes _R \widetilde R$ is injective. 
\end{lemma}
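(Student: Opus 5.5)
The plan is to construct $\widetilde R$ as an iterated splitting algebra, in the same way one builds a splitting field, but without ever quotienting by a non-monic or non-principal ideal. Then I check that each step is a free extension, which gives injectivity on $M$.

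\textbf{Step 1: one root at a time.} Given a monic $g \in R[T]$ of degree $d \geq 1$, set $R_1 \coloneqq R[X]/\gen{g(X)}$. Since $g$ is monic, division with remainder by $g$ works over any commutative ring. Hence $R_1$ is a free $R$-module with basis $1, X, \ldots, X^{d-1}$. In particular $R \to R_1$ is injective, because $1$ is a basis element, and $R_1$ is finitely generated as an $R$-module.

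In $R_1$ the element $x \coloneqq \overline{X}$ is a root of $g$. Dividing $g(T)$ by the monic linear polynomial $T - x$ in $R_1[T]$ gives $g(T) = (T-x)\,g_1(T)$, where $g_1 \in R_1[T]$ is monic of degree $d-1$.

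\textbf{Step 2: iterate.} Apply the same construction to $g_1$ over $R_1$, then to the resulting quotient, and so on. After $d$ steps we obtain a tower $R = R_0 \subseteq R_1 \subseteq \cdots \subseteq R_d \eqqcolon \widetilde R$. Each $R_{j+1}$ is free of finite rank over $R_j$, so $\widetilde R$ is free of rank $d!$ over $R$, and $g$ splits as $(T-x_1)\cdots(T-x_d)$ over $\widetilde R$. Everything is effective: $\widetilde R$ has the explicit presentation
\[
R[X_1,\ldots,X_d]/\gen{g(X_1), g_1(X_1,X_2), \ldots},
\]
where each $g_j$ is obtained by polynomial division. The computability assumptions (i) and (ii) on $\widetilde R$ and $M \otimes_R \widetilde R$ then follow from those on $R$ and $M$ via the explicit free basis.

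\textbf{Step 3: injectivity.} Since $\widetilde R \cong R^{N}$ as $R$-modules with $N = d!$, and the basis can be chosen to contain $1$ (the monomial $X_1^0\cdots X_d^0$), we get $M \otimes_R \widetilde R \cong M^N$. Under this isomorphism $m \mapsto m \otimes 1$ corresponds to the inclusion of $M$ into the coordinate indexed by $1$, which is injective.

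This is the step that needs care. Tensoring is not left exact in general, so injectivity is not automatic. What saves us is that each extension is free, with $1$ as part of the basis, which holds precisely because every polynomial we quotient by is monic. Keeping all the $g_j$ monic at each stage is the main point to verify, and it holds because $(T - x)$ is monic and $g$ is monic.
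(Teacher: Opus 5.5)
Your proposal is correct and follows essentially the same route as the paper. The paper adjoins a root via $R[x]/\langle g(x)\rangle \cong R^d$ as an $R$-module, so that $M \otimes_R \widetilde R \cong M^d$ and $M \to M \otimes_R \widetilde R$ is injective. It then divides $g$ by $T-x$ and inducts on the degree. You unroll that induction into an explicit tower, which is free of rank $d!$ with $1$ in the basis.
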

\begin{proof}
    We prove by induction on the degree $d \coloneqq \deg(g)$. 
    Define $\widetilde R$ to be the quotient $R[x]/\langle g(x)\rangle$ of the polynomial ring $R[x]$. 
    As $R$-modules, $\widetilde R = R + xR + \cdots + x^{d-1}R \cong R^d$, and therefore $\widetilde M =M\otimes_R \widetilde R \cong M^d$, and the map $M\rightarrow M\otimes _R \widetilde R$ is injective. 
    In the ring $\widetilde R[T]$, we have $T - x \mid g(T)$.
    Therefore we can let $\widetilde g(T) \coloneqq \frac{g(T)}{T-x}$ and use induction on $\widetilde R, \widetilde M, \widetilde g$. 
\end{proof}

Therefore we can extend $R,M$, and without loss of generality suppose the recurrence polynomial of $\gamma$ splits over $R$.
Next, we need the following analogue of Proposition~\ref{prop:decompose_localize} for linear recurrence sequences, in order to reduce to the case of modules over local rings:
\begin{lemma}\label{lem:decompose_LRS}
    Let $\gamma \colon \N^n\rightarrow M$ be a linear recurrence sequence over an $R$-module $M$.
    Then one can effectively write $\mZ(\gamma) = \bigcap_{i = 1}^d \mZ(\gamma_i)$, where each $\gamma_i:\N^n\rightarrow M_i$ is a linear recurrence sequence over a finitely generated $R_i$-module $M_i$, with $R_i$ local with maximal ideal $\frm_i$, and $\frm_i^t = 0$ for some $t \in \N$.
    Moreover, the recurrence polynomial of $\gamma_i$ splits over $R_i$.
    Furthermore, if $\ell M = 0$ for some $\ell \in \N_{>0}$, then for each $i = 1, \ldots, d$, the residue field $R_i/\frm_i$ has characteristic $p_i$ for some prime number $p_i$ dividing $\ell$.
\end{lemma}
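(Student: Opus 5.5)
We follow the proof of Proposition~\ref{prop:decompose_localize}, adapted to sequences, and first make the recurrence polynomial split. Using the remark after the Observation, we take $g$ to be a common recurrence polynomial of $\gamma$ in every dimension. By restricting $R$ to the subring generated by the coefficients of $g$, $R$ becomes finitely generated over $\Z$ and hence Noetherian. By restricting $M$ to the submodule generated by the finitely many initial values $\gamma(\ba)$, $\ba\in\{0,\ldots,d-1\}^n$, $M$ becomes finitely generated. This submodule contains every value of $\gamma$, since the recurrence expresses each value as an $R$-combination of initial values.

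Next we apply Lemma~\ref{lem:split_extension}. It gives $\widetilde R\supseteq R$ that is finitely generated as an $R$-module, so $\widetilde R$ is still Noetherian and finitely generated over $\Z$, and $g$ splits over $\widetilde R$. The map $M\hookrightarrow \widetilde M = M\otimes_R\widetilde R$ is injective. Composing $\gamma$ with this injection gives a sequence over the $\widetilde R$-module $\widetilde M$ with the same zero set and the same recurrence. Also $\widetilde M$ is finitely generated, and $\ell\widetilde M=0$ whenever $\ell M=0$.

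Now we take an effective primary decomposition $\{0\}=\bigcap_i N_i$ of $\widetilde M$, so that each $\widetilde M/N_i$ is $\frp_i$-coprimary. Composing with the projections $\pi_i$ gives $\mZ(\gamma)=\bigcap_i\mZ(\pi_i\circ\gamma)$. Each $\pi_i\circ\gamma$ is again a linear recurrence sequence with recurrence polynomial $g$, because $\pi_i$ is $\widetilde R$-linear and the recurrence is a linear relation among values. As before, $\frp_i^t(\widetilde M/N_i)=0$. We localize at $\frp_i$ and pass to $R_i = \widetilde R_{\frp_i}/\frm^t$, a local ring whose maximal ideal $\frm_i$ satisfies $\frm_i^t=0$, and to the module $M_i=(\widetilde M/N_i)_{\frp_i}$. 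Coprimaryness makes $\widetilde M/N_i\to M_i$ injective, so composing with this map preserves the zero set.

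The recurrence is transported along the ring map $\varphi\colon\widetilde R\to R_i$: the coefficients $c_{ij}$ become $\varphi(c_{ij})$, and the recurrence polynomial becomes $\varphi(g)=\prod_j (T-\varphi(r_j))$, which still splits. This is the point that differs from Proposition~\ref{prop:decompose_localize}, and it is the reason we split $g$ before decomposing rather than after. The characteristic statement is argued as there: $\ell\in\frp_i$ forces some prime $p_i\mid\ell$ to lie in $\frp_i$, so $R_i/\frm_i$ has characteristic $p_i$.

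The main obstacle is effectiveness. We must check that the tensor construction, the primary decomposition and the localization can all be computed under the standard computability assumptions. This holds because every object involved is finitely presented over a finitely generated ring.
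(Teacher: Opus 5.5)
Your proposal is correct and follows the paper's proof: restrict to a finitely generated ring and module, make the recurrence polynomial split via Lemma~\ref{lem:split_extension} before decomposing, then run the primary decomposition and localization argument of Proposition~\ref{prop:decompose_localize}, with splitting preserved because the recurrence polynomial is mapped coefficientwise along $\widetilde R\to R_i$. You spell out more of the routine checks than the paper does, such as Noetherianity of $\widetilde R$, injectivity of $M\to M\otimes_R\widetilde R$, and $\ell\widetilde M=0$.
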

\begin{proof}
    By restricting $R$ to the subring generated by the coefficients $c_{ij}$ in the recurrences~\eqref{eq:recurrence_def}, and restricting $M$ to the submodule generated by the initial values of $\gamma$, we can suppose $R$ to be finitely generated and $M$ to be finitely generated over $R$.
    By Lemma~\ref{lem:split_extension} we can suppose that the recurrence polynomial of $\gamma$ splits over $R$.
    The rest of the proof is essentially the same as Proposition~\ref{prop:decompose_localize}, replacing linear-exponential maps by $n$-dimensional linear recurrence sequences.
    Using primary decomposition and localization, we find maps $\varphi_i \colon R \rightarrow R_i, \phi_i \colon M \rightarrow M_i, i = 1, \ldots, d$, such that $\mZ(\gamma) = \bigcap_{i = 1}^d \mZ(\phi_i \circ \gamma)$, and such that $R_i, M_i$ satisfy the requirements in the lemma.
    Then $\gamma_i \coloneqq \phi_i \circ \gamma$ is a linear recurrence sequence in the $R_i$-module $M_i$, whose recurrence polynomial is obtained by applying $\varphi_i$ coefficientwise to the recurrence polynomial of $\gamma$.
    Hence, this polynomial splits over $R_i$.
\end{proof}

By Lemma~\ref{lem:decompose_LRS}, it suffices to focus on the case where $R$ is local, the maximal ideal $\frm$ contains some prime number $p$, and $\frm^t = 0$ for some $t \in \N$.
We can also suppose that the recurrence polynomial of $\gamma$ splits as $g(T) = \prod_{i = 1}^d(T-r_i)$, where the roots $r_1, \ldots, r_d \in R$ are not necessarily distinct.
The following proposition shows that, by taking a suitable multiple of $g$ and after a suitable variable change, we can suppose the roots to be distinct even modulo $\frm$.

\begin{proposition}\label{prop:seperable_pol}
    Let $r_1,...,r_d\in R$, not necessarily distinct, and $\frm \subseteq R$ be a prime ideal containing some prime number $p$, and $\frm^t=0$ for some $t \in \N$. Then there exist effectively computable $s_1,...,s_{\ell}\in R$, pairwise distinct modulo $\frm$, and $u\in\N$, such that:
    \begin{equation}\label{eq:sep_pol}
    \prod_{i = 1}^d(T-r_i) \;\Big|\; \prod_{i = 1}^{\ell} \left(T^{p^u}-s_i \right).
    \end{equation}
\end{proposition}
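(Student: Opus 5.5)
The plan is to group the roots $r_1,\ldots,r_d$ according to their residues modulo $\frm$. For each class I pick a representative $\rho_j$ and set $s_j \coloneqq \rho_j^{p^u}$ for a suitably large $u$. Three things then need checking: the $s_j$ are pairwise distinct modulo $\frm$, each class polynomial divides $T^{p^u}-s_j$, and the class polynomials are pairwise coprime, so their product divides the full product in~\eqref{eq:sep_pol}.

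\textbf{Preliminary observations.} Since $\frm^t=0$, every element of $\frm$ is nilpotent. So $\frm$ is contained in every prime ideal, and hence is the unique prime ideal of $R$. Thus $R$ is local with maximal ideal $\frm$, $R^\times=R\setminus\frm$, and $F=R/\frm$ is a field of characteristic $p$. Partition $\{r_1,\ldots,r_d\}$ into classes $C_1,\ldots,C_\ell$ by residue mod $\frm$; this is effective, since membership in $\frm$ is decidable under the standing computability assumptions. Let $\rho_j\in C_j$ and $e_j=|C_j|$, counted with multiplicity, and put $P_j(T)=\prod_{r_i\in C_j}(T-r_i)$. The residues of $s_j=\rho_j^{p^u}$ in $F$ are $\overline{\rho_j}^{\,p^u}$. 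Frobenius is injective on a field of characteristic $p$, so these are pairwise distinct.

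\textbf{Divisibility within one class.} Fix $j$ and write $Z=T-\rho_j$, so that $P_j=\prod(Z-\varepsilon_i)$ with $\varepsilon_i\in\frm$. In the quotient $A=R[T]/\gen{P_j}$ we have $0=P_j(Z)\equiv Z^{e_j}\pmod{\frm A}$. Hence $Z^{e_j}\in\frm A$, and therefore $Z^{e_jt}\in\frm^tA=0$, i.e.\ $P_j\mid (T-\rho_j)^{e_jt}$.

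Next apply Lemma~\ref{lem:lifting} in $R[T]$ to the ideal $I=\gen{T-\rho_j}+\frm R[T]$. It contains $p$, and $T\equiv\rho_j\pmod I$, so $T^{p^{u}}-\rho_j^{p^u}\in I^{u+1}$. Every generator of $I^{u+1}$ has the form $(T-\rho_j)^a\mu$ with $\mu\in\frm^{u+1-a}$. If $u+1\ge e_jt+t$, then either $a\ge e_jt$ or $\mu\in\frm^t=0$. Hence $I^{u+1}\subseteq\gen{(T-\rho_j)^{e_jt}}$. Taking $u\coloneqq t+t\max_j e_j$, we get $P_j\mid T^{p^u}-s_j$ for every $j$.

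\textbf{Combining the classes.} For $r\in C_j$ and $r'\in C_{j'}$ with $j\ne j'$, we have $(T-r)-(T-r')=r'-r\in R^\times$. So $\gen{T-r,T-r'}=R[T]$, and taking products, $\gen{P_j,P_{j'}}=R[T]$. For pairwise comaximal polynomials that each divide $Q\coloneqq\prod_{j}(T^{p^u}-s_j)$, the standard CRT argument gives $\prod_j P_j\mid Q$: from $1=aP_1+bP_2$ we get $Q=aP_1Q+bP_2Q$, and each term is divisible by $P_1P_2$; then induct on the number of classes. Since $\prod_j P_j=\prod_i(T-r_i)$, this proves~\eqref{eq:sep_pol}.

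The main obstacle is the within-class divisibility, where repeated and congruent roots prevent a naive argument. The nilpotence trick, showing $P_j\mid (T-\rho_j)^{e_jt}$, combined with the $I$-adic lifting lemma, is what handles it.
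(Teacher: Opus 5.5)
Your within-class argument is correct and is a mild variant of the paper's. The paper first equalizes the roots of a class via Lemma~\ref{lem:lifting} in $R$ ($r_i^{p^t}=r_1^{p^t}$), so the class polynomial divides $(T^{p^t}-r_1^{p^t})^d$. It then applies the $\gen{T-s,\frm}$-adic lifting claim with $T$ replaced by $T^{p^t}$. You instead get $P_j\mid (T-\rho_j)^{e_jt}$ from nilpotency in $R[T]/\gen{P_j}$ and apply the same lifting argument once, which is slightly more direct. Your proof that the $s_j$ are distinct modulo $\frm$, via injectivity of Frobenius, is the paper's argument.

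There is, however, a false step in your preliminary observations, and your combining step depends on it. From $\frm^t=0$ you only get that $\frm$ lies in every prime ideal. That makes $\frm$ the nilradical, i.e.\ the unique \emph{minimal} prime; it does not follow that $\frm$ is the only prime, nor that $R$ is local. The proposition only assumes $\frm$ is prime. For instance, $R=\F_p[X]$, $\frm=\{0\}$, $t=1$ satisfies all the hypotheses, yet $X\notin\frm$ is not a unit. There $\gen{T,\,T-X}\neq R[T]$, so your claim that $r'-r\in R^\times$, and hence comaximality of the class polynomials, fails.

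The repair is easy, because comaximality is not needed at all. Once $P_j\mid T^{p^u}-s_j$ for every $j$, multiplying these divisibilities gives $\prod_{i}(T-r_i)=\prod_jP_j\mid\prod_j(T^{p^u}-s_j)$ directly; this is how the paper concludes. Likewise, for distinctness you only need $R/\frm$ to be an integral domain of characteristic $p$, on which $x\mapsto x^p$ is still injective. With the CRT paragraph replaced by this one-line argument, and the locality claim dropped, your proof is complete.
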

\begin{proof}
    First consider the case where all $r_i$ are equivalent modulo $\frm$. 
    By Lemma~\ref{lem:lifting}, $r_i\equiv r_1 \pmod{\frm}$ implies $r_i^{p^t}\equiv r_1^{p^t} \pmod{\frm^{t+1}=0}$, so $r_i^{p^t} = r_1^{p^t}$.  
    Now, $T-r_i$ divides $T^{p^t}-r_i^{p^t}=T^{p^t}-r_1^{p^t}$, and therefore:
    \[
    \prod_{i = 1}^d(T-r_i) \;\Big|\; \left(T^{p^t}-r_1^{p^t} \right)^d.
    \]
    To obtain the polynomial on the right hand side of~\eqref{eq:sep_pol}, we need to replace $\left(T^{p^t}-r_1^{p^t} \right)^d$ with $T^{p^u}-r_1^{p^u}$ for some $u$.
    This boils down to proving the following claim:
    \medskip
    \begin{adjustwidth}{3mm}{3mm}
    \textbf{Claim.} Let $n = t+d$, then for any $s \in R$, we have
    \[
    (T-s)^d\mid T^{p^n}-s^{p^n}.
    \]
    \medskip
    \noindent$\blacktriangleright$
    Define the ideal $I=\gen{T-s, \frm}$ of $R[T]$. Since $T\equiv s \pmod{I}$ and $p\in I$, using Lemma~\ref{lem:lifting} we get $T^{p^n}\equiv s^{p^n} \pmod{I^{n+1}}$ for every $n$.
    Note that every element in $I^{n+1}$ is an $R[T]$-linear combination of elements of the form $f_{1} \cdots f_{k} \cdot (T - s)^{n+1-k}$ with $0 \leq k \leq n+1$, $f_1, \ldots, f_k \in \frm$.
    Since $\frm^t=0$, this element is either $0$ or it is divisible by $(T-s)^{n+2-t}$.
    Therefore $I^{n+1} \subseteq \gen{(T-s)^{n+2-t}}$ for all $n \geq t$.
    Since $T^{p^n}\equiv s^{p^n} \pmod{I^{n+1}}$, we have $(T-s)^{n+2-t} \mid T^{p^n}-s^{p^n}$.
    Taking $n = d+t$ gives
    $
    (T-s)^{d+2}\mid T^{p^n}-s^{p^n}
    $, proving the claim.
    \hfill $\blacktriangleleft$
    \end{adjustwidth}
    \medskip

\noindent In the claim, replacing $T$ with $T^{p^t}$ and $s$ with $r_1^{p^t}$, we get altogether: 
\[
\prod_{i = 1}^d (T-r_i) \;\Big|\; \left(T^{p^t}-r_1^{p^t} \right)^d \;\bigg|\; T^{p^{t+n}}-r_1^{p^{t+n}}.
\]
This proves the proposition for the case where all $r_i$ are equivalent modulo $\frm$. 

For the general case, choose $\mI \subseteq\{1,...,d\}$ so that $\{r_i\mid i\in \mI\}$ has a single representative from each class modulo $\frm$. 
Using the previous case, we obtain a $u \in \N$ such that
\[
\prod_{i = 1}^d (T-r_i) \;\Big|\; \prod_{i\in \mI} \left(T^{p^u}-r_i^{p^u} \right).
\]
We now show that $r_i^{p^u}, i\in \mI$, are distinct modulo $\frm$.
We claim that for $a, b \in R$, $a \not\equiv b \pmod{\frm} \implies a^p\not\equiv b^p \pmod{\frm}$.
Indeed, if $a-b\notin \frm$, then $(a-b)^p\notin\frm$ since $\frm$ is prime. Since $\frm$ contains the prime number $p$, we get $(a-b)^p\equiv a^p-b^p \pmod{\frm}$, so $a^p\not\equiv b^p \pmod{\frm}$, proving the claim. 
Consider two different elements $i, j \in \mI$, using the claim repeatedly we obtain $r_i \not\equiv r_j \pmod{\frm} \implies r_i^p \not\equiv r_j^p \pmod{\frm} \implies \cdots \implies r_i^{p^u}\not\equiv r_j^{p^u} \pmod{\frm}$, for all $u \geq 0$.

Finally, we obtain the proposition by letting $s_i \coloneqq r_i^{p^u}$.
\end{proof}

As an example for the statement of Proposition~\ref{prop:seperable_pol}, it is easy to verify that in $\Z_{/4}[T]$, the polynomial $(T-1)^2(T-2)$ divides $(T^4-1)T^4$.
\medskip

We are now ready to prove Theorem~\ref{thm:LRS}:
\begin{proof}[Proof of Theorem~\ref{thm:LRS}]
We can write $\mZ(\gamma)$ as a finite intersection $\bigcap_{i = 1}^d \mZ(\gamma_i)$ as in Lemma~\ref{lem:decompose_LRS}, and it suffices to show that each $\mZ(\gamma_i)$ is $p_i$-normal, since $p$-normal sets are closed under finite intersections.
Thus we are reduced to the case where $R$ is a local ring with maximal ideal $\frm$, with $\frm^t = 0$ for some $t \in \N$, the residue field $R/\frm$ has characteristic $p$, and such that the recurrence polynomial $g$ of the sequence $\gamma$ splits over $R$.

By Proposition~\ref{prop:seperable_pol}, there are $\ell, u \in \N$ and $s_1,\ldots,s_{\ell} \in R$, distinct modulo $\frm$, such that $g(T)\mid \prod_{i=1}^{\ell} (T^{p^u}-s_i)$.

    \medskip
    \begin{adjustwidth}{3mm}{3mm}
    \textbf{Claim.} For each $\bj \in \{0, 1, \ldots, p^u-1\}^n$, the sequence $\gamma_{\bj}(\ba) \coloneqq \gamma(p^u \ba + \bj)$ admits the recurrence polynomial $\prod_{i=1}^{\ell} (T-s_i)$ in every dimension.
    \medskip
    
    \noindent$\blacktriangleright$
    Let $\tilde g(T) \coloneqq \prod_{i=1}^{\ell} (T-s_i)$, then $g(T) \mid \tilde g(T^{p^u})$, so $\tilde g(T^{p^u})$ is also a recurrence polynomial for $\gamma$.
    Write $\tilde g(T^{p^u}) = T^{p^u \ell} - c_1 T^{p^u(\ell - 1)} - \cdots - c_{\ell}$, then $\gamma$ satisfies the recurrence 
    \[
    \gamma(\ba) = c_1 \cdot \gamma(a_1, \ldots, a_i-p^u, \ldots, a_n) + \cdots + c_{\ell} \cdot \gamma(a_1, \ldots, a_i-p^u\ell, \ldots, a_n)
    \]
    for every $i$ and $a_i \geq p^u \ell$.
    Therefore the sequence $\gamma_{\bj}(\ba) \coloneqq \gamma(p^u \ba + \bj)$ has the recurrence polynomial $T^{\ell} - c_1 T^{\ell - 1} - \cdots - c_{\ell} = \tilde g(T)$ in every dimension.
    \hfill $\blacktriangleleft$
    \end{adjustwidth}
    \medskip
If we prove that each $\mZ(\gamma_{\bj})$ is $p$-normal, then we obtain that $\mZ(\gamma) = \bigcup_{\bj \in \{0, 1, \ldots, p^u-1\}^n} \left(p^u \mZ(\gamma_{\bj}) + \bj \right)$ is also $p$-normal.
Therefore without loss of generality we may replace $\gamma$ by $\gamma_{\bj}$ and assume the recurrence polynomial of $\gamma$ is $\prod_{i=1}^{\ell} (T-s_i)$, with $s_1, \ldots, s_{\ell} \in R$, distinct modulo $\frm$.

    \medskip
    \begin{adjustwidth}{3mm}{3mm}
    \textbf{Claim.} We can write $\gamma$ as a linear-exponential sum 
    \begin{equation}\label{eq:alpha_lin_exp}
    \gamma(a_1, \ldots, a_n) = \sum_{(i_1, \ldots, i_n) \in \{1, \ldots, \ell\}^n} s_{i_1}^{a_1} \cdots s_{i_n}^{a_n} \cdot m_{i_1, \ldots, i_n},
    \end{equation}
    for some effectively computable $m_{i_1, \ldots, i_n} \in M,\; (i_1, \ldots, i_n) \in \{1, \ldots, \ell\}^n$.
    \medskip
    
    \noindent$\blacktriangleright$
    We first restrict to the 1-dimensional case for simplicity.
    Define $V \in R^{\ell \times \ell}$ to be the Vandermonde matrix associated with $s_1,\ldots,s_{\ell} \in R$.
    That is, $V$ is the $\ell \times \ell$ matrix whose $i$-th row is $(1,s_i,s_i^2,\ldots, s_i^{\ell-1})$.
    Then $\det(V)=\prod_{1\leq i<j \leq \ell}(s_i-s_j)$~\cite[Section~0.9.11]{horn2012matrix}. 
    Recall that $R$ is local with maximal ideal $\frm$.
    For $i < j$, since $s_i \not\equiv s_j \pmod{\frm}$, we have $s_i - s_j \in R \setminus \frm = R^{\times}$.
    Therefore $\det(V) \in R^{\times}$ and consequently $V$ is invertible in $R^{\ell \times \ell}$.

    Since $V$ is invertible, we can compute a row vector $(m_1, \ldots, m_{\ell}) \in M^{\ell}$ such that\footnote{Here we are considering $M$ as a right $R$-module, which is the same as our earlier convention of $M$ as a left module since $R$ is commutative.}
    \[
    (m_1, \ldots, m_{\ell} ) \cdot V = (\gamma(0),...,\gamma(\ell-1) ).
    \]
    This means that
    \[
    \textstyle
    \sum_{i=1}^{\ell}s_i^a m_i = \gamma(a),\; \text{ for } a = 0, 1, \ldots, \ell-1.
    \]
    Therefore the two sequences $\gamma(a)$ and $\tilde \gamma(a) \coloneqq \sum_{i=1}^{\ell}s_i^a m_i$ have the same $\ell$ initial values.
    Furthermore, they satisfy the same recurrence polynomial $\prod_{i=1}^{\ell} (T-s_i)$ of degree $\ell$ (see Example~\ref{example:multi_dim_LRS}(iv)).
    This implies that $\gamma(a)$ and $\tilde \gamma(a)$ are the same sequence, so $\gamma(a)=\sum_{i=1}^{\ell}s_i^a m_i$ for all $a \in \N$.
    This concludes the 1-dimensional case.
    
    The $n$-dimensional case is similar. 
    Let $V^{\otimes n} \in R^{\ell^n \times \ell^n}$ be the $n$-fold tensor power of the Vandermonde matrix $V$ associated with $s_1,\ldots,s_{\ell}$.
    That is, $V^{\otimes n}$ is the $\ell^n \times \ell^n$ matrix whose rows and columns are indexed by the set $\{1, \ldots, \ell\}^n$, and whose $\big((i_1, \ldots, i_n), (j_1, \ldots, j_n)\Big)$-entry is $s_{i_1}^{j_1-1} s_{i_2}^{j_2-1} \cdots s_{i_n}^{j_n-1}$.
    Since $V$ has an inverse $V^{-1}$, the tensor product $V^{\otimes n}$ has an inverse $(V^{-1})^{\otimes n}$ in $R^{\ell^n \times \ell^n}$ (see~\cite[p.408]{LancasterTismenetsky1985}).

    Since $V^{\otimes n}$ is invertible, we can compute a row vector $\big(m_{(i_1, \ldots, i_n)}\big)_{(i_1, \ldots, i_n) \in \{1, \ldots, \ell\}^n} \in M^{\ell^n}$ such that
    \[
    \Big(m_{(i_1, \ldots, i_n)}\Big)_{(i_1, \ldots, i_n) \in \{1, \ldots, \ell\}^n} \cdot V^{\otimes n} =  \Big(\gamma(j_1-1, \ldots, j_n-1)\Big)_{(j_1, \ldots, j_n) \in \{1, \ldots, \ell\}^n}.
    \]
    This means that
    \[
    \sum_{(i_1, \ldots, i_n) \in \{1, \ldots, \ell\}^n} s_{i_1}^{a_1} \cdots s_{i_n}^{a_n} \cdot m_{i_1, \ldots, i_n} = \gamma(a_1, \ldots, a_n),\; \text{ for } (a_1, \ldots, a_n) \in \{0, 1, \ldots, \ell-1\}^n.
    \]
    Therefore the sequences on both sides have the same initial values and the same recurrence polynomial $\prod_{i=1}^{\ell} (T-s_i)$ of degree $\ell$ (see Example~\ref{example:multi_dim_LRS}(iv)).
    Just as in the 1-dimensional case we obtain that they are the same sequence.
    \hfill $\blacktriangleleft$
    \end{adjustwidth}
    \medskip

\noindent It now suffices to show that the zero set $\mZ(\gamma) \subseteq \N^n$ of the linear-exponential sum~\eqref{eq:alpha_lin_exp} is $p$-normal in $\N^n$.
If none of the elements $s_1, \ldots, s_{\ell}$ is in the maximal ideal $\frm$, then they are all invertible and hence the linear-exponential sum~\eqref{eq:alpha_lin_exp} is well-defined over $\Z^n$.
Therefore by Theorem~\ref{thm:main} (or Theorem~\ref{thm:local}), the zero set $\mZ(\gamma)$ is the intersection of $\N^n$ with a $p$-normal subset of $\Z^n$, which is $p$-normal in $\N^n$.

If some of the elements $s_1, \ldots, s_{\ell}$ are in the maximal ideal $\frm$, since they are distinct modulo $\frm$, we may without loss of generality suppose $s_1, \ldots, s_{\ell-1} \notin \frm, s_{\ell} \in \frm$.
Recall that $\frm^t = 0$, so $s_{\ell}^t=0$.
By partitioning each coordinate $a_i, i \in \{1, \ldots, n\}$ according to whether
$a_i = 0, a_i = 1, \ldots, a_i = t-1$, or $a_i \geq t$, we obtain a grid partition of $\N^n$ (see Definition~\ref{def:pnormal_N}).
On each grid cell $L$, the restriction of $\gamma$ can be represented by a linear-exponential map $\gamma_L \colon \Z^n \rightarrow M$, obtained by fixing the variables $a_i$ specified by $a_i = \star$, and removing the terms containing the factor $s_\ell^{a_i}$ specified by $a_i\ge t$. 
Hence,
$
\mZ(\gamma)=\bigcup_L \bigl(\mZ(\gamma_L)\cap L\bigr),
$
where $L$ ranges over the cells of the grid partition.
Note that each $\gamma_{L}$ is now defined over $\Z^n$, so by Theorem~\ref{thm:main}, $\mZ(\gamma_L)$ is $p$-normal in $\Z^n$.
By Definition~\ref{def:pnormal_N} we obtain that $\mZ(\gamma)$ is $p$-normal in $\N^n$.
\end{proof}

The last step of the above proof can be illustrated by the following example.
\begin{example}
    Let $R$ be a local ring with maximal ideal $\frm$, such that $\frm^2 = 0$.
    Consider a linear-exponential sum
    \[
    \gamma(a_1, a_2) = s_1^{a_1} s_1^{a_2} m_{11} + s_1^{a_1} s_2^{a_2} m_{12} + s_2^{a_1} s_1^{a_2} m_{21} + s_2^{a_1} s_2^{a_2} m_{22},
    \]
    where $s_1 \notin \frm, s_2 \in \frm$.
    Then $s_1$ is invertible, and $s_2^2 = 0$.
    The zero set $\mZ(\gamma)$ is decomposed into $9$ components, according to whether $a_1 = 0, a_1 = 1$, or $a_1 \geq 2$, and whether $a_2 = 0, a_2 = 1$, or $a_2 \geq 2$.
    \begin{enumerate}[label = (\roman*)]
        \item If $a_1 \geq 2$ and $a_2 \geq 2$, then $s_2^{a_1} = s_2^{a_2} = 0$, so
        \begin{align*}
            \gamma(a_1, a_2) & = s_1^{a_1} s_1^{a_2} m_{11} + s_1^{a_1} s_2^{a_2} m_{12} + s_2^{a_1} s_1^{a_2} m_{21} + s_2^{a_1} s_2^{a_2} m_{22} \\
            & = s_1^{a_1} s_1^{a_2} m_{11}.
        \end{align*}
        In this case, we set $\tilde \gamma(a_1, a_2) \coloneqq s_1^{a_1} s_1^{a_2} m_{11}$, which is well-defined over $\Z^2$.
        This gives the grid component $\mZ(\tilde\gamma) \cap \big(\{a_1 \geq 2\} \times \{a_2 \geq 2\}\big)$.
        \item If $a_1 \geq 2$ and $a_2 = 1$, then $s_2^{a_1} = 0, s_1^{a_2} = s_1, s_2^{a_2} = s_2$, so
        \begin{align*}
            \gamma(a_1, a_2) & = s_1^{a_1} s_1^{a_2} m_{11} + s_1^{a_1} s_2^{a_2} m_{12} + s_2^{a_1} s_1^{a_2} m_{21} + s_2^{a_1} s_2^{a_2} m_{22} \\
            & = s_1^{a_1} \cdot s_1 m_{11} + s_1^{a_1} \cdot s_2 m_{12}.
        \end{align*}
        We can multiply it with the dummy factor $1^{a_2}$ and let $\tilde\gamma(a_1, a_2) \coloneqq s_1^{a_1} 1^{a_2} \cdot s_1 m_{11} + s_1^{a_1} 1^{a_2} \cdot s_2 m_{12}$.
        This gives the grid component $\mZ(\tilde\gamma) \cap \big(\{a_1 \geq 2\} \times \{1\}\big)$.
        The case where $a_1 \geq 2, a_2 = 0$, and the cases where $a_1 \in \{0, 1\}, a_2 \geq 2$ are analogous.
        \item If $a_1 = 1$ and $a_2 = 1$, then $\gamma(a_1, a_2)$ becomes the constant function $s_1 s_1 m_{11} + s_1 s_2 m_{12} + s_2 s_1 m_{21} + s_2 s_2 m_{22}$. We obtain the grid component $\{(1, 1)\}$ or $\emptyset$. The other cases are analogous.
    \end{enumerate}
    Finally, $\mZ(\gamma)$ is the union of the $9$ grid components obtained above.
    \hfill $\blacksquare$
\end{example}

\section*{Acknowledgements} The authors would like to thank Joris Nieuwveld and James Worrell for discussion about the Skolem Problem. Ruiwen Dong is supported by a Fellowship by Examination at Magdalen College, Oxford.

\bibliography{short}

\end{document}